\documentclass[12pt]{amsart}

\usepackage{graphicx}

\newcommand{\ilimit}{\,\varprojlim{}\!}
\newcommand{\dlimit}{\,\varinjlim{}\!}
\newcommand{\mono}{\rightarrowtail}
\newcommand{\epi}{\twoheadrightarrow}

\usepackage[arrow,matrix,curve]{xy}\SilentMatrices
\def\xyma{\xymatrix@M.7em}

\oddsidemargin=-0.1in \evensidemargin=-0.1in \textwidth=6.6in
\topmargin=0.2in \textheight=8.6in
\newcommand{\para}{\par\vspace{.25cm}}
\newtheorem{Lemma}{Lemma}[section]
\newtheorem{Proposition}[Lemma]{Proposition}
\newtheorem{Theorem}[Lemma]{Theorem}
\newtheorem{Corollary}[Lemma]{Corollary}
\newtheorem{prop}[Lemma]{Proposition}
\newtheorem{Conjecture}[Lemma]{Conjecture}
\newtheorem{Remark}[Lemma]{Remark}
\usepackage{amssymb}
\usepackage{amsfonts}
\usepackage{euscript}

\numberwithin{equation}{section}

\begin{document}

\title{A higher limit approach to homology theories}
\author{Sergei O. Ivanov}
\address{Chebyshev Laboratory, St. Petersburg State University, 14th Line, 29b,
Saint Petersburg, 199178 Russia} \email{ivanov.s.o.1986@gmail.com}

\author{Roman Mikhailov}
\address{Chebyshev Laboratory, St. Petersburg State University, 14th Line, 29b,
Saint Petersburg, 199178 Russia and St. Petersburg Department of
Steklov Mathematical Institute} \email{rmikhailov@mail.ru}
\urladdr{http://www.mi.ras.ru/\~{}romanvm/pub.html}

\thanks{This research is supported by the Chebyshev
Laboratory  (Department of Mathematics and Mechanics, St.
Petersburg State University)  under RF Government grant
11.G34.31.0026 and by JSC "Gazprom Neft". The first author is
supported by RFBR (grant no. 12-01-31100 mol\_a, 13-01-00902 A)}

\begin{abstract}
A lot of well-known functors such as group homology, cyclic
homology of algebras can be described as limits of certain simply
defined functors over categories of presentations. In this paper,
we develop technique for the description of the higher limits over
categories of presentations and show that certain homological
functors can be described in this way. In particular, we give a
description of Hochschild homology and the derived functors of tensor, symmetric and
exterior powers in the sense of Dold and Puppe as higher limits.
\end{abstract}
\maketitle

\section{Introduction}

Let $k$ be a ring, ${\sf Mod}(k)$ the category of $k$-modules and
let $\mathcal C$ be a category such that for any two objects $c$
and $c'$ there exists a morphism $f:c\to c'.$ Then for a functor
$\mathcal F:\mathcal C\to {\sf Mod}(k)$ the limit  $\ilimit\
\mathcal F$ is the largest constant subfunctor in $\mathcal F.$
Therefore, one can perceive $\ilimit\ \mathcal F$ as the largest
part of $\mathcal F(c)$ which does not dependent on $c$.

Let $A$ be an ``algebraic object'' (group, abelian group, associative algebra, ...). We denote by  ${\sf Pres}(A)$ the category of presentations of
$A$ as a quotient of a free object $F\epi A$.
 This category satisfies the condition above and a lot of interesting ``homology theories of $A$'' can be described as limits of simple functors
  ${\sf Pres}(A)\to {\sf Mod}(k)$ without use of  homological algebra.
D. Quillen in \cite{Q} proved that if $A$ is an algebra over a field of characteristic zero, then even cyclic homology can be described as the limit
\begin{equation}\label{int_HC}
HC_{2n}(A)=\ilimit\ F/(I^{n+1}+[F,F]).
\end{equation}
The limit is taken over the category of presentations $I\mono F\epi A,$ where $F$ is a free algebra. In other words,  $HC_{2n}(A)$ is the largest part of $F/(I^n+[F,F])$ independent of the presentation $I\mono F\epi A.$ He also proved the similar formula for the odd reduced cyclic homology:
\begin{equation}\label{int_HC_red} \overline{HC}_{2n+1}(A)=\ilimit \ I^{n+1}/[I,I^n].
\end{equation}
I. Emmanouil and R. Mikhailov \cite{EM} proved the similar formulas for the case of group homology:
\begin{equation}\label{int_H}
H_{2n}(G,M)=\ilimit\  R_{ab}^{\otimes n}\otimes_{\mathbb Z[G]}M.
\end{equation}
The limit is taken over the category of presentations $R\mono
F\epi G$, where $F$ is a free group and $R_{ab}$ is the relation
module. Further R. Mikhailov and I. B. S. Passi \cite{MP}
described the highest nonzero Dold-Puppe derived functors of
symmetric, exterior and tensor powers as limits in the category of
free presentations of an abelian group. In this article we prove
the following formula for even Hochschild homology of an algebra
$A$ over a field of any characteristic:
\begin{equation}\label{int_HH}
H_{2n}(A,M)= \ilimit \ (I^n/I^{n+1})\otimes_{A^e} M,
\end{equation}
where $A^e=A\otimes A^{\rm op}.$

The most exiting thing in this approach is the fact that a lot of
homology theories can be described without any recourse to
homological algebra but using only presentations. Moreover, thanks
to the interpretation of a limit as the largest part independent
of presentation this description is very intuitive. On the other
hand, this approach shows that homologies can be useful in work
with an algebraic object by means of its presentation. The
additional advantage of this approach is that it gives a way to
observe different maps between homology theories.  If we denote
$M_\natural=M/[F,M]=H_0(F,M),$ then the formulas  \eqref{int_HC}
and \eqref{int_HH} imply  \begin{equation} HC_{2n}(A)=\ilimit\
(F/I^{n+1})_\natural \ \ \text{ и }\ \  HH_{2n}(A)=\ilimit\
(I^n/I^{n+1})_\natural
\end{equation} for algebras over a field of characteristic zero.
Therefore, the short exact sequence  $I^n/I^{n+1}\mono F/I^{n+1} \epi F/I^n$ gives a sequence of homomorphisms $HH_{2n}(A)\to HC_{2n}(A)\to HC_{2n-2}(A).$

However, this approach has some disadvantage. The main of them is that the homologies are described as limits only for a part of indexes. Another disadvantage is impossibility to obtain long exact sequences for these homologies using this language.

In this work we improve these drawbacks using the higher limits. A
representation of a category $\mathcal C$ over a commutative ring
$k$ is a functor $\mathcal F:\mathcal C\to {\sf Mod}(k).$ The
category of representations of  $\mathcal C$ is denoted by ${\sf
Mod}(k)^{\mathcal C}.$ Then the limit of a representation is a
left exact functor  between abelian categories $\ilimit : {\sf
Mod}(k)^{\mathcal C}\to {\sf Mod}(k)$. Its derived functors are
called higher limits and denoted by $\ilimit^i:={\bf
R}^i\!\ilimit.$ The main aim of the article is to generalise the
formulas  \eqref{int_H}-\eqref{int_HH} to the following formulas
(see theorems \ref{groupisomorphism} and \ref{hoshc})
\begin{equation}
H_{2n-i}(G,M)=\ilimit^i\  R^{\otimes n}_{ab}\otimes_{\mathbb Z[G]}
M,\ \  \text{ for } i<n,
\end{equation}
\begin{equation}
H_{2n-i}(A,M)=\ilimit^i\  (I^n/I^{n+1})\otimes_{A^e} M,\ \  \text{ for } i<n,
\end{equation}
and give the following description of the derived functors of
symmetric, exterior and tensor powers (see theorem
\ref{derivedth})
\begin{equation} L_{n-i} S^n(A)=\ilimit^i\ \Lambda^n (H),
\end{equation}
\begin{equation}
L_{n-i} \Lambda^n(A)=\ilimit^i\ \Gamma^n (H),\end{equation}
\begin{equation}L_{n-i} \otimes^n(A)=\ilimit^i\ {\otimes^n}(H). \end{equation}
The limits are taken over the category of presentations  $H\mono
F\epi A,$ where $F$ is a free abelian group. The formulas
\eqref{int_HC}, \eqref{int_HC_red} are generalised for the case of
augmented algebras over a field of characteristic zero to the
folowing formulas (see theorem \ref{cychomlim})
\begin{equation}\label{int_HC_higher}
HC_{2n-i}(A)=\ilimit^i\ F/(I^{n+1}+[F,F]), \ \ \ \text{ for } i\in \{0,1\},
\end{equation}
\begin{equation}\label{int_HC_red_higher} \overline{HC}_{2n+1-i}(A)=\ilimit^i \ I^{n+1}/[I,I^n], \ \ \ \text{ for } i\in \{0,1\}.
\end{equation}
These formulas describe the homology theories for all indexes. It should be noted that any commutative algebra $A$ of finite type over the field $\mathbb{C}$ has an augmentation $A\to \mathbb C.$ Thus the formulas  \eqref{int_HC_higher} and \eqref{int_HC_red_higher} hold for all coordinate algebras of affine complex varieties.

Further, we give another point of view on group homology through
the higher limits. Let  $R\hookrightarrow F \overset{\pi}\epi G$
be a presentation of a group $G$. Denote by $F_n(\pi)$ the $n$th
fibred power  $F\times_G  \cdots \times_G F.$ Then there are
recurrence relations, which express higher homologies through the
first homologies (see theorem \ref{fibretheorem})
\begin{equation}
H_{2n-2}(G,M)=\ilimit\ H_{n-1}(F_n(\pi),M) \ \ \text{ for }  n\geq 2,
\end{equation}
\begin{equation}
H_{2n-3}(G,M)=\ilimit^1\ H_{n-1}(F_n(\pi),M) \ \ \text{ for } \ n\geq 3.
\end{equation}
Furthermore, we compute limits of some higher relation modules. For example, if  $G$ is a 2-torsion-free group then
\begin{equation}
\ilimit\ [R,R]/[R,R,F] = H_4(G,\mathbb Z/2),
\end{equation}
\begin{equation}
\ilimit^1\ [R,R]/[R,R,F] = H_3(G,\mathbb Z/2).
\end{equation}

The paper is organized as follows. In section \ref{section_Representations_of_categories} we recall the theory of representations of categories  \cite{Xu}, \cite{Jackowski-Slominska},
\cite{Webb}. In section \ref{section_Representations_of_categories_with_pairwise_coproducts} we develop technique of computing of higher limits of representations of categories with pairwise coproducts. The main result in this section is proposition \ref{proposition_cokernel_limits} which allows to compute the higher limits for so-called monoadditive functors. Section \ref{section_4-term_exact_sequences} is devoted to  4-term exact sequences of the form ${\mathcal H}_*(A)\mono \mathcal F_1 \to \mathcal F_2 \epi \mathcal H_{*-1}(A)$ where $\mathcal F_i$ are  representations of ${\sf Pres}(A)$ and $\mathcal H_*(A)$ is a one of the homology theories of an algebraic object $A$ (considered as a constant functor). These sequences are very useful for computing of higher limits. Sections \ref{section_Group_homology_as_higher_limits}, \ref{section_Hochschild_and_Cyclic_homology},
\ref{section_Derived_functors_in_the_sense_of_Dold-Puppe_as_higher_limits} are devoted to higher limit formulas of homology theories for groups, algebras and abelian groups respectively, and in section \ref{section_Hihger_relation_modules} we compute higher limits for some higher relation modules over groups.

\section{Representations of categories and higher limits.}\label{section_Representations_of_categories}
In this section  we recall several facts about
representations of categories and higher limits and colimits which
the reader can find in \cite{Xu}, \cite{Jackowski-Slominska},
\cite{Webb} with more details and proofs (see also
\cite{Quillen}).

\subsection{Representations of categories.}
 Let $k$ be a commutative ring and $\mathcal{C}$ be a category. Using considerations of Grothendieck universes we assume that all categories are small for an appropriate universe. By definition, a {\it representation} of $\mathcal{C}$ (over $k$) is
 a functor $\mathcal{F}:\mathcal{C}\to \mathsf{Mod}(k).$ The category of representations $\mathsf{Mod}(k)^\mathcal{C}$ is an abelian category with enough projectives and injectives,
  and a sequence in this category is exact if and only if it is exact objectwise. Denote by $k\mathcal{C}$ the category algebra of $\mathcal{C}$
   i.e. the algebra with the basis $\mathsf{Mor}(\mathcal{C})$ whose multiplication is induced by the composition and vanishes for non-composable
    morphisms. This algebra has an identity element if and only if $\mathcal{C}$ has finitely many objects. The category of representations is
     naturally embedded into the category of (left) $k\mathcal{C}$-modules as an abelian subcategory:
$$\mathsf{Mod}(k)^\mathcal{C}\hookrightarrow \mathsf{Mod}(k\mathcal{C}),\ \ \mathcal{F}\mapsto \bigoplus_{c\in \mathcal{C}} \mathcal{F}(c),$$
and its image consists of $k\mathcal{C}$-modules $M$ such that
$M=\bigoplus_{c\in \mathcal{C}} \mathsf{id}_c\cdot M$. We will
identify a representation of $\mathcal{C}$ and the corresponding
$k\mathcal{C}$-module. All these definitions  extend the
corresponding notions for groups considered as a category with one
object and for quivers which are replaced by the associated free
categories.

\subsection{Higher limits and cohomology.}Consider the diagonal
functor $\mathsf{Mod}(k)\to \mathsf{Mod}(k)^\mathcal{C}$ which
sends a $k$-module $M$ to a constant functor $\mathcal{C}\to
\mathsf{Mod}(k)$ sending all objects to $M$ and all morphisms to
$\mathsf{id}_M.$ We denote this {\it constant representation} by
the same symbol $M:\mathcal{C}\to \mathsf{Mod}(k).$ Then the limit
and colimit functors $\ilimit, \dlimit:
\mathsf{Mod}(k)^\mathcal{C}\to \mathsf{Mod}(k)$ are defined as the
right and the left adjoint functors to the diagonal functor.
Therefore, we have the natural isomorphisms:
\begin{equation} \mathsf{Hom}_{k\mathcal{C}}(M,\mathcal{F})\cong \mathsf{Hom}_k(M,\ilimit\, \mathcal{F}), \ \ \ \
\mathsf{Hom}_{k\mathcal{C}}(\mathcal{F},M)\cong
\mathsf{Hom}_k(\dlimit\, \mathcal{F},M),
\end{equation} for any representation
$\mathcal{F}$ and any $k$-module $M.$ Since $\ilimit$ is a right
adjoint functor, it is a left exact additive functor. Hence it has
right derived functors, and similarly $\dlimit$ has left derived
functors.
\begin{equation}\ilimit^i={\bf R}^i\!\ilimit, \ \ \dlimit_i={\bf L}_i\!\dlimit.
\end{equation}
The $n$th homology and cohomology group of $\mathcal{C}$ with
coefficients in a representation $\mathcal{F}$ are defined as
follows:
\begin{equation}
H_n(\mathcal{C},\mathcal{F})=\mathsf{Tor}^{k\mathcal{C}}_n(k,\mathcal{F}), \ \ H^n(\mathcal{C},\mathcal{F})=\mathsf{Ext}_{k\mathcal{C}}^n(k,\mathcal{F}).
\end{equation}
For any $n\geq 0$ there are isomorphisms
\begin{equation}\ilimit^n\ \mathcal{F}\cong H^n(\mathcal{C},\mathcal{F}), \ \ \dlimit^n\ \mathcal F\cong H_n(\mathcal{C},\mathcal{F}).
\end{equation}

\subsection{Preserving of higher limits.} Recall that the {\it
nerve} of a category $\mathcal{C}$ is a simplicial set
$\mathsf{N}\mathcal{C}:\Delta^{op}\to \mathsf{Sets}$ given by the
formula $\mathsf{N}\mathcal{C}=\mathsf{Cat}(-,\mathcal{C})$ (here
we consider $\Delta$ as a full subcategory of $\mathsf{Cat}$ where
$[n]$ is a free category generated by the graph $0\to 1\to \dots
\to n$), and the classifying space of the category $\mathcal{C}$
is the geometrical realization of its nerve
$\mathsf{B}\mathcal{C}=|\mathsf{N}\mathcal{C}|.$ A functor
$\Phi:\mathcal{C}\to \mathcal{D}$ induces a morphism of simplicial
sets $\mathsf{N}\Phi:\mathsf{N}\mathcal{C}\to
\mathsf{N}\mathcal{D}$ and a continuous map
$\mathsf{B}\Phi:\mathsf{B}\mathcal{C}\to \mathsf{B}\mathcal{D}.$
Moreover, a natural transformation $\alpha:\Phi_1\to \Phi_2$
induces a homotopy $\mathsf{B}\alpha$ between $\mathsf{B}\Phi_1$
and $\mathsf{B}\Phi_2$ \cite[\S 1 proposition 2]{Quillen}.  (One
could say that $\mathsf{B}$ is a strict 2-functor.)

For a $k$-module $M$ the (co)homology groups of $\mathcal{C}$ with
coefficients in the constant representation $M$ are isomorphic to
(co)homology groups of the classifying space
$\mathsf{B}\mathcal{C}$ with coefficients in $M:$
\begin{equation}H_n(\mathcal{C},M)=H_n(\mathsf{B}\mathcal{C},M), \ \ \ H^n(\mathcal{C},M)=H^n(\mathsf{B}\mathcal{C},M).\end{equation}
A category $\mathcal{C}$ is said to be {\it contractible} if its
classifying space $\mathsf{B}\mathcal{C}$ is contractible and it
is said to be $k${\it -acyclic}
 if the reduced homology groups $\tilde H_*(\mathsf{B}\mathcal{C},k)$ vanish. Of course, a contractible category is $k$-acyclic.

Let  $\Phi:\mathcal{C}\to \mathcal{D}$ be a functor. For an object
$d\in D$ we denote by $\Phi\!\!\downarrow\!\!d$ the comma category
(see \cite{Mac_Lane}) consisting of objects $(c,\alpha)$, where
$c\in \mathcal{C}$  and $\alpha\in \mathcal{D}(\Phi(c),d)$. A
morphism in the comma category from  $(c,\alpha)$ to
$(c',\alpha')$ is given by $f:c\to c'$, which satisfies $\alpha
\Phi(f)=\alpha'.$ If $\mathcal{C}=\mathcal{D}$ and
$\Phi=\mathsf{Id}_\mathcal{D}$, the corresponding comma category
is denoted by $\mathcal{D}\!\!\downarrow\!\!d.$

\begin{Proposition}\label{proposition_A}{\rm (see \cite[5.4]{Jackowski-Slominska}).}
Let $\Phi:\mathcal{C}\to \mathcal{D}$ be a functor satisfying the
condition that $\Phi\!\!\downarrow\!\!d$ is $k$-acyclic for any
$d\in D.$ Then for any representation $\mathcal{F}$  of
$\mathcal{D}$ there is an isomorphism
\begin{equation}\theta_{\Phi}:\ilimit^n\mathcal{F}
\overset{\cong}{\longrightarrow}  \ilimit^n(\mathcal{F} \Phi),\end{equation}
which is natural by $\mathcal{F}$ and $\Phi.$
\end{Proposition}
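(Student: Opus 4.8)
The plan is to obtain $\theta_{\Phi}$ as the edge homomorphism of a Grothendieck spectral sequence and to collapse that spectral sequence by means of the acyclicity hypothesis. Since a sequence of representations is exact precisely when it is exact objectwise, the restriction functor $\Phi^{*}\colon\mathsf{Mod}(k)^{\mathcal{D}}\to\mathsf{Mod}(k)^{\mathcal{C}}$, $\mathcal{F}\mapsto\mathcal{F}\Phi$, is exact; moreover it sends the constant representation $M$ on $\mathcal{D}$ to the constant representation $M$ on $\mathcal{C}$, so that $\Phi^{*}\Delta_{\mathcal{D}}=\Delta_{\mathcal{C}}$ for the diagonal functors. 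Being exact, $\Phi^{*}$ has a right adjoint $\Phi_{*}$ (the right Kan extension along $\Phi$), which therefore preserves injectives, and passing to right adjoints in the identity $\Phi^{*}\Delta_{\mathcal{D}}=\Delta_{\mathcal{C}}$ yields a natural isomorphism $\varprojlim_{\mathcal{C}}\cong\varprojlim_{\mathcal{D}}\circ\,\Phi_{*}$. First I would record these formal facts.

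From $\varprojlim_{\mathcal{C}}\cong\varprojlim_{\mathcal{D}}\circ\,\Phi_{*}$ and the fact that $\Phi_{*}$ preserves injectives one gets, for the representation $\Phi^{*}\mathcal{F}$, a spectral sequence $E_{2}^{p,q}=\varprojlim^{p}_{\mathcal{D}}\bigl(\mathbf{R}^{q}\Phi_{*}\,\Phi^{*}\mathcal{F}\bigr)\Rightarrow\varprojlim^{p+q}_{\mathcal{C}}\Phi^{*}\mathcal{F}$. The core of the argument is then the pointwise evaluation of the derived Kan extension over the comma categories: for each $d\in\mathcal{D}$ one must identify $\bigl(\mathbf{R}^{q}\Phi_{*}\,\Phi^{*}\mathcal{F}\bigr)(d)$ with the cohomology $H^{q}(\Phi\!\downarrow\!d,\mathcal{F}(d))$ of the comma category $\Phi\!\downarrow\!d$ with \emph{constant} coefficients $\mathcal{F}(d)$. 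The decisive point is that, although $\mathcal{F}$ pulls back to the a priori non-constant system $(c,\alpha)\mapsto\mathcal{F}(\Phi c)$ on $\Phi\!\downarrow\!d$, the natural transformation to the constant system $\mathcal{F}(d)$ supplied by the structure maps $\alpha$ ought to become a cohomology isomorphism once the coefficients are localised at $d$.

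Granting this identification, the hypothesis that each $\Phi\!\downarrow\!d$ is $k$-acyclic finishes the proof at once. Indeed, $k$-acyclicity of $\mathsf{B}(\Phi\!\downarrow\!d)$ gives, through the universal coefficient theorem, that the reduced cohomology $\tilde H^{q}(\mathsf{B}(\Phi\!\downarrow\!d),\mathcal{F}(d))$ vanishes for every $q\ge 0$ and every $k$-module $\mathcal{F}(d)$; hence $H^{0}(\Phi\!\downarrow\!d,\mathcal{F}(d))\cong\mathcal{F}(d)$ while $H^{q}(\Phi\!\downarrow\!d,\mathcal{F}(d))=0$ for $q>0$. Consequently $\mathbf{R}^{q}\Phi_{*}\,\Phi^{*}\mathcal{F}=0$ for $q>0$, and in degree $q=0$ the unit $\mathcal{F}\to\Phi_{*}\Phi^{*}\mathcal{F}$ is an isomorphism. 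The spectral sequence therefore degenerates onto the single row $q=0$, and its edge homomorphism is the asserted isomorphism $\theta_{\Phi}\colon\varprojlim^{n}\mathcal{F}\xrightarrow{\ \cong\ }\varprojlim^{n}(\mathcal{F}\Phi)$. Naturality in $\mathcal{F}$ and in $\Phi$ is then automatic, since $\Phi^{*}$, the unit of the adjunction, and the whole spectral sequence are natural in both variables.

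The step I expect to be the real obstacle is exactly the fibrewise identification $\bigl(\mathbf{R}^{q}\Phi_{*}\,\Phi^{*}\mathcal{F}\bigr)(d)\cong H^{q}(\Phi\!\downarrow\!d,\mathcal{F}(d))$ with constant coefficients: one has to show that pulling $\mathcal{F}$ back along $\Phi$ and restricting to $\Phi\!\downarrow\!d$ produces a system that is cohomologically constant of value $\mathcal{F}(d)$. The most reliable way to carry this out is to compute $\varprojlim^{\bullet}$ by the cosimplicial (Bousfield--Kan) replacement and to reorganise its cochains according to their terminal object in $\mathcal{D}$, so that the coefficients genuinely localise to the constant module $\mathcal{F}(d)$ over each comma category; this is the cohomological counterpart of Quillen's Theorem A. Two further points demand care: the precise variance of the comma category entering the fibre term, and the passage from vanishing of reduced $k$-homology to vanishing of cohomology with arbitrary $k$-module coefficients, which is where the universal coefficient theorem is invoked.
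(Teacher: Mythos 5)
The paper itself offers no proof of this proposition (it is quoted from \cite[5.4]{Jackowski-Slominska}), so your attempt must be measured against the standard argument. Your formal setup is fine: $\Phi^*$ is exact, $\Phi^*\Delta_{\mathcal D}=\Delta_{\mathcal C}$, hence $\varprojlim_{\mathcal C}\cong\varprojlim_{\mathcal D}\circ\,\Phi_*$ with $\Phi_*$ the right Kan extension, $\Phi_*$ preserves injectives, and the Grothendieck spectral sequence $E_2^{pq}=\varprojlim^p_{\mathcal D}(\mathbf R^q\Phi_*\Phi^*\mathcal F)\Rightarrow\varprojlim^{p+q}_{\mathcal C}(\mathcal F\Phi)$ exists. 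The gap sits exactly at the step you single out as the ``real obstacle,'' and it is not a technicality that care will fix: the pointwise formula for the \emph{right} Kan extension is $(\Phi_*\mathcal G)(d)=\varprojlim_{d\downarrow\Phi}\mathcal G\circ\mathrm{pr}$, taken over the comma category $d\!\downarrow\!\Phi$ whose objects are pairs $(c,\alpha:d\to\Phi(c))$, so that $(\mathbf R^q\Phi_*\mathcal G)(d)\cong H^q(d\!\downarrow\!\Phi,\mathcal G\circ\mathrm{pr})$. This is the \emph{opposite} comma category to the one in the hypothesis ($\Phi\!\downarrow\!d$, with $\alpha:\Phi(c)\to d$), and on $d\!\downarrow\!\Phi$ the coefficient system $(c,\alpha)\mapsto\mathcal F(\Phi c)$ receives a map \emph{from} the constant system $\mathcal F(d)$, not a map to it. Acyclicity of $\Phi\!\downarrow\!d$ gives no control over $H^q(d\!\downarrow\!\Phi,-)$: already for $\mathcal C=\{c\}$, $\mathcal D=\{d_0\to d_1\}$, $\Phi(c)=d_1$, one has $(\Phi_*\mathcal G)(d_0)=\mathcal G(c)$ while $\Phi\!\downarrow\!d_0=\emptyset$, so the claimed identification $(\mathbf R^q\Phi_*\Phi^*\mathcal F)(d)\cong H^q(\Phi\!\downarrow\!d,\mathcal F(d))$ is false as stated.

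The repair is to derive the adjunction in the other variable. Write $\varprojlim^n_{\mathcal C}(\mathcal F\Phi)=\mathrm{Ext}^n_{k\mathcal C}(\underline k,\Phi^*\mathcal F)$, take a projective resolution $P_\bullet\to\underline k$ in $\mathsf{Mod}(k)^{\mathcal C}$ (e.g.\ the bar resolution) and apply the \emph{left} Kan extension $\Phi_!$ (left adjoint to $\Phi^*$), which preserves projectives because $\Phi^*$ is exact. Its derived functors on the constant representation are computed over the correct comma category and with genuinely constant coefficients: $(\mathbf L_q\Phi_!\,\underline k)(d)\cong H_q(\Phi\!\downarrow\!d,k)=H_q(\mathsf B(\Phi\!\downarrow\!d),k)$. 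The $k$-acyclicity hypothesis says precisely that this is $k$ for $q=0$ and $0$ for $q>0$, so $\Phi_!P_\bullet$ is a projective resolution of $\underline k$ in $\mathsf{Mod}(k)^{\mathcal D}$, whence
$\mathrm{Ext}^n_{k\mathcal D}(\underline k,\mathcal F)\cong H^n\mathrm{Hom}(\Phi_!P_\bullet,\mathcal F)\cong H^n\mathrm{Hom}(P_\bullet,\Phi^*\mathcal F)\cong\mathrm{Ext}^n_{k\mathcal C}(\underline k,\Phi^*\mathcal F)$,
naturally in $\mathcal F$ and $\Phi$. On this route no universal coefficient theorem is needed, and the constancy of the coefficients --- the issue you correctly identified as decisive --- is automatic rather than something to be proved.
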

\begin{Remark}
In the previous proposition by the naturalness by $\Phi$ we mean
that for a natural transformation $\alpha:\Phi_1\to \Phi_2$ of
functors satisfying the condition of the proposition we have
$\alpha_*\theta_{\Phi_1}=\theta_{\Phi_2},$ where
$\alpha_*:\ilimit^n(\mathcal{F}\Phi_1)\to
\ilimit^n(\mathcal{F}\Phi_2)$ is the map induced by $\alpha.$ In
\cite{Jackowski-Slominska} the fact that the isomorphism is
natural did not state, but it follows easily from the proof and
the fact that the obvious map $\theta^0_\Phi:\ilimit\:\mathcal{F}\to
\ilimit(\mathcal{F} \Phi)$ is natural by $\mathcal{F}$ and $\Phi.$
\end{Remark}
\begin{Corollary}\label{corollary_A_naturalness}
Let $\Phi_1,\Phi_2:\mathcal{C}\to \mathcal{D}$ be functors
satisfying the condition of proposition \ref{proposition_A} and
$\alpha:\Phi_1\to \Phi_2$ is a natural transformation. The induced
homomorphism $\alpha_*:\ilimit^n(\mathcal{F}\Phi_1)\to
\ilimit^n(\mathcal{F}\Phi_2)$ is an isomorphism.
\end{Corollary}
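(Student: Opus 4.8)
The plan is to read the statement off directly from Proposition \ref{proposition_A} and the naturalness recorded in the preceding Remark, so that the corollary becomes a formal consequence. Since both $\Phi_1$ and $\Phi_2$ satisfy the hypothesis that $\Phi_i\!\!\downarrow\!\!d$ is $k$-acyclic for every $d\in D$, Proposition \ref{proposition_A} applies to each and produces isomorphisms
\[\theta_{\Phi_1}:\ilimit^n\mathcal{F}\overset{\cong}{\longrightarrow}\ilimit^n(\mathcal{F}\Phi_1), \qquad \theta_{\Phi_2}:\ilimit^n\mathcal{F}\overset{\cong}{\longrightarrow}\ilimit^n(\mathcal{F}\Phi_2).\]
These two isomorphisms share the common source $\ilimit^n\mathcal{F}$, which is precisely what will let us compare their targets.

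Next I would invoke the naturalness by $\Phi$ exactly as the Remark spells it out: for the transformation $\alpha:\Phi_1\to\Phi_2$ one has the identity $\alpha_*\,\theta_{\Phi_1}=\theta_{\Phi_2}$, where $\alpha_*:\ilimit^n(\mathcal{F}\Phi_1)\to\ilimit^n(\mathcal{F}\Phi_2)$ is the map induced by $\alpha$ (this map exists because $\alpha$ yields a natural transformation $\mathcal{F}\Phi_1\to\mathcal{F}\Phi_2$, hence a map on the right derived functors of $\ilimit$). Solving this relation gives $\alpha_*=\theta_{\Phi_2}\,\theta_{\Phi_1}^{-1}$, a composite of two isomorphisms, so that $\alpha_*$ is itself an isomorphism. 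This completes the argument.

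The only point deserving attention — and the reason the Remark is inserted before the corollary — is that Proposition \ref{proposition_A}, as quoted from \cite{Jackowski-Slominska}, asserts naturalness in $\mathcal{F}$ and $\Phi$ but does not literally display the compatibility triangle for a transformation $\alpha$. I would therefore take the relation $\alpha_*\,\theta_{\Phi_1}=\theta_{\Phi_2}$ as the substantive input, justified in the Remark by reducing it to the evident naturalness of the degree-zero map $\theta^0_\Phi:\ilimit\,\mathcal{F}\to\ilimit(\mathcal{F}\Phi)$. Beyond securing that triangle, there is essentially no obstacle: the corollary is purely formal once both $\theta_{\Phi_i}$ are known to be isomorphisms with a shared domain.
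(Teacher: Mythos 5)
Your argument is exactly the paper's: apply Proposition \ref{proposition_A} to each $\Phi_i$ to get the isomorphisms $\theta_{\Phi_i}$ with common source $\ilimit^n\mathcal{F}$, then use the naturality relation $\alpha_*\theta_{\Phi_1}=\theta_{\Phi_2}$ recorded in the Remark to conclude $\alpha_*=\theta_{\Phi_2}\theta_{\Phi_1}^{-1}$ is an isomorphism. This matches the paper's commutative-triangle proof, so the proposal is correct and essentially identical in approach.
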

\begin{proof}
Since the isomorphism in proposition \ref{proposition_A} is
natural by $\Phi,$ there is a commutative diagram
$$\xymatrix{
 && \ilimit^n\mathcal{F}\ar[dll]_{\theta_{\Phi_1}}^\cong\ar[drr]^{\theta_{\Phi_2}}_\cong  && \\
\ilimit^n(\mathcal{F}\Phi_1)\ar[rrrr]^{\alpha_*} && &&
\ilimit^n(\mathcal{F}\Phi_2). }$$ Hence, $\alpha_*$ is an
isomorphism.
\end{proof}
\begin{Lemma}\label{lemma_adjoint}
Left adjoint functors satisfy the condition of proposition
\ref{proposition_A}. In particular, if $\Phi:\mathcal{C}\to
\mathcal{D}$ is a left adjoint functor and $\mathcal{F}$ is a
representation of $\mathcal{D},$ then there is an isomorphism
$\ilimit^n\mathcal{F}\cong \ilimit^n(\mathcal{F} \Phi)$ for any
$n\geq 0.$
\end{Lemma}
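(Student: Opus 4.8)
The plan is to prove the first, geometric assertion — that $\Phi\!\!\downarrow\!\!d$ is $k$-acyclic for every $d\in\mathcal{D}$ — since the displayed isomorphism then follows immediately from Proposition~\ref{proposition_A}. In fact I expect to establish something sharper: that each comma category $\Phi\!\!\downarrow\!\!d$ has a terminal object and is therefore contractible, hence $k$-acyclic.

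First I would fix a right adjoint $G\colon\mathcal{D}\to\mathcal{C}$ to $\Phi$ with counit $\varepsilon\colon\Phi G\to \mathsf{Id}_{\mathcal{D}}$, so that the adjunction furnishes, for all $c$ and $d$, a bijection $\mathcal{C}(c,G(d))\xrightarrow{\ \cong\ }\mathcal{D}(\Phi(c),d)$ sending $f\mapsto \varepsilon_d\circ\Phi(f)$, natural in $c$. The candidate terminal object of $\Phi\!\!\downarrow\!\!d$ is the pair $(G(d),\varepsilon_d)$. The key — and essentially the only — step is to check that this object is terminal. Given any $(c,\alpha)$, a morphism $(c,\alpha)\to (G(d),\varepsilon_d)$ is by definition an arrow $f\colon c\to G(d)$ making the defining triangle of the comma category commute, i.e. satisfying $\varepsilon_d\circ\Phi(f)=\alpha$; but this says precisely that $f$ is the element of $\mathcal{C}(c,G(d))$ corresponding to $\alpha\in\mathcal{D}(\Phi(c),d)$ under the adjunction bijection above. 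Since that map is a bijection, such an $f$ exists and is unique, so $(G(d),\varepsilon_d)$ is terminal. I expect this bookkeeping to be the main (if routine) obstacle, as it requires matching the comma-category morphism condition with the triangle identity in the correct variance.

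Finally, a category with a terminal object $t$ is contractible: the unique maps $x\to t$ assemble into a natural transformation from the identity functor to the constant functor at $t$, and by the cited fact that $\mathsf{B}$ carries natural transformations to homotopies \cite[\S 1 proposition 2]{Quillen}, the identity of $\mathsf{B}(\Phi\!\!\downarrow\!\!d)$ is homotopic to a constant map. Hence $\Phi\!\!\downarrow\!\!d$ is contractible, and in particular $k$-acyclic for every $d$. This verifies the hypothesis of Proposition~\ref{proposition_A} for $\Phi$, and that proposition then delivers the natural isomorphisms $\ilimit^n\mathcal{F}\cong \ilimit^n(\mathcal{F}\Phi)$ for all $n\geq 0$.
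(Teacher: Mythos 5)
Your proposal is correct and follows essentially the same route as the paper: exhibit $(\Psi(d),\varepsilon_d)$ (your $(G(d),\varepsilon_d)$) as a terminal object of $\Phi\!\!\downarrow\!\!d$ via the adjunction bijection, conclude contractibility from the existence of a terminal object, and then invoke Proposition \ref{proposition_A}. You merely spell out the terminality check that the paper leaves as a citation to Mac Lane.
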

\begin{proof}
Let $\Psi:\mathcal{D}\to \mathcal{C}$ be a right adjoint functor
to $\Phi.$ Then the comma category $\Phi\!\!\downarrow\!\!d$ has a
terminal object which is given by the the pair
$(\Psi(d),\varepsilon_d),$ where $\varepsilon_d:\Phi\Psi(d)\to d$
is the counit of adjunction (\cite[IV.1]{Mac_Lane}). Thus the
category $\Phi\!\!\downarrow\!\!d$ is contractible (\cite[\S 1
Corollary 2]{Quillen}).
\end{proof}

\subsection{Spectral sequence of higher limits.}
\begin{Proposition} Let $\Phi:\mathcal A\to \mathcal B$ be a left exact functor between abelian categories with enough injectives and $a^\bullet\in {\sf Com}^+(\mathcal A)$ be a bounded below complex with $\Phi$-acyclic cohomologies.
Then there exists a cohomological spectral sequence $E$ of objects of $\mathcal B$ so that
\begin{equation}E\Rightarrow \Phi(H^n(a^\bullet)) \ \ \text{ and } \  \ E_1^{pq}={\bf R}^q \Phi(a^p).\end{equation}
\end{Proposition}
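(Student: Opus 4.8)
The plan is to realise $E$ as the first spectral sequence attached to a Cartan--Eilenberg resolution of $a^\bullet$, and then to use the $\Phi$-acyclicity hypothesis to identify its abutment through the second spectral sequence. First I would choose a Cartan--Eilenberg resolution $a^\bullet\to I^{\bullet\bullet}$ by a double complex of injectives of $\mathcal A$; such a resolution exists because $\mathcal A$ has enough injectives, and since $a^\bullet$ is bounded below the double complex $I^{\bullet\bullet}$ may be taken bounded below as well. Recall that in such a resolution each column $I^{p\bullet}$ is an injective resolution of $a^p$, and moreover the induced complexes of horizontal cycles $Z^p_{\mathrm h}(I^{\bullet q})$, boundaries $B^p_{\mathrm h}(I^{\bullet q})$ and cohomologies $H^p_{\mathrm h}(I^{\bullet q})$ are injective resolutions of $Z^p(a^\bullet)$, $B^p(a^\bullet)$ and $H^p(a^\bullet)$ respectively.

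Next I would apply $\Phi$ to obtain a double complex $\Phi(I^{\bullet\bullet})$ of objects of $\mathcal B$, form its total complex $T^\bullet=\mathrm{Tot}\,\Phi(I^{\bullet\bullet})$, and consider the two spectral sequences of this double complex. Since the double complex is bounded below, both spectral sequences converge to $H^{p+q}(T^\bullet)$. The spectral sequence obtained by taking vertical cohomology first is the candidate for $E$: because $I^{p\bullet}$ is an injective resolution of $a^p$, its first page is $E_1^{pq}=H^q_{\mathrm v}\Phi(I^{p\bullet})=\mathbf R^q\Phi(a^p)$, exactly as required, with $d_1$ induced by the horizontal differential.

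It remains to identify the abutment with $\Phi(H^n(a^\bullet))$, and for this I would compute the other spectral sequence, obtained by taking horizontal cohomology first. The crucial point is that the short exact sequences $0\to Z^p_{\mathrm h}\to I^{p\bullet}\to B^{p+1}_{\mathrm h}\to 0$ and $0\to B^p_{\mathrm h}\to Z^p_{\mathrm h}\to H^p_{\mathrm h}\to 0$ are degreewise split, since their right-hand terms are injective; hence $\Phi$ preserves them and therefore commutes with the formation of horizontal cohomology, giving $H^p_{\mathrm h}\Phi(I^{\bullet q})\cong\Phi(H^p_{\mathrm h}(I^{\bullet q}))$. As $H^p_{\mathrm h}(I^{\bullet\bullet})$ is an injective resolution of $H^p(a^\bullet)$, taking vertical cohomology afterwards produces the second page $\mathbf R^q\Phi(H^p(a^\bullet))$. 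By the $\Phi$-acyclicity hypothesis these terms vanish for $q>0$, so this spectral sequence collapses onto its bottom row and yields $H^n(T^\bullet)\cong\Phi(H^n(a^\bullet))$.

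Substituting this identification of $H^n(T^\bullet)$ into the first spectral sequence completes the argument: we obtain a spectral sequence $E$ with $E_1^{pq}=\mathbf R^q\Phi(a^p)$ and $E\Rightarrow\Phi(H^n(a^\bullet))$. I expect the main obstacle to be bookkeeping rather than conceptual: one must verify the splitting properties of the Cartan--Eilenberg resolution carefully enough to justify that $\Phi$ commutes with horizontal cohomology, and check that the boundedness of $a^\bullet$ genuinely guarantees convergence of both spectral sequences, so that the two filtrations of the same object $H^n(T^\bullet)$ may legitimately be compared.
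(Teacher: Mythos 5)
Your proposal is correct and follows essentially the same route as the paper: take a Cartan--Eilenberg resolution, apply $\Phi$, use the first spectral sequence of the resulting double complex for $E_1^{pq}={\bf R}^q\Phi(a^p)$, and use the second one (which degenerates by $\Phi$-acyclicity) to identify the abutment with $\Phi(H^n(a^\bullet))$. The only cosmetic difference is that you justify the commutation of $\Phi$ with horizontal cohomology via the split exact sequences of cycles, boundaries and cohomologies, while the paper phrases the same fact as a homotopy equivalence of each row with its cohomology; these are equivalent.
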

\begin{proof}
Consider an injective Cartan-Eilenberg resolution $I^{\bullet \bullet}$ of $a^\bullet$ with an injection $a^\bullet \mono I^{\bullet,0}$ and differentials $d_{\rm I}^{pq}:I^{pq}\to I^{p+1,q}$ and $d_{\rm II}^{pq}:I^{pq}\to I^{p,q+1}.$ Then there are two spectral sequences ${}_{\rm I}E$ and $_{\rm II}E$ associated with the bicomplex $B^{\bullet\bullet}=\Phi(I^{\bullet\bullet})$ which converge to the cohomology of the totalisation $H^n({\sf Tot}(B^{\bullet\bullet}))$ so that ${}_{\rm I}E_1^{pq}=H^q_{\rm II}(B^{p\bullet}),$ ${}_{\rm I}E_2^{pq}=H^p_{\rm I}(H^q_{\rm II}(B^{\bullet\bullet}))$ and  ${}_{\rm II}E_1^{pq}=H^p_{\rm I}(B^{\bullet, q}),$ ${}_{\rm II}E_2^{pq}=H^q_{\rm II}(H^p_{\rm I}(B^{\bullet\bullet})).$ Since $I^{\bullet\bullet}$ is a Cartan-Eilenberg resolution, the complex $I^{\bullet,q}$ is homotopy equivalent to the complex of its cohomologies $H_{\rm I}^*(I^{\bullet,q})$ with zero differentials, and hence $H_{\rm I}^p(B^{\bullet,q })=\Phi(H^p_{\rm I}(I^{\bullet,q})).$  The complex $H^p_{\rm I}(I^{\bullet\bullet})$ is an injective resolution of $H^p(a^\bullet).$ It follows that $H^q_{\rm II}(H^p_{\rm I}(B^{\bullet\bullet}))={\bf R}^q\Phi(H^p(a^\bullet)).$ Using the fact that the objects $H^p(a^\bullet)$ are $\Phi$-acyclic, we obtain
\begin{equation}{}_{\rm II}E_2^{pq}=\left\{\begin{array}{l l}
0, & q\ne 0 \\
\Phi(H^p(a^\bullet)), & q=0.
\end{array}\right.\end{equation} Hence we get $H^n({\sf Tot}(B^{\bullet\bullet}))=\Phi(H^n(a^\bullet)).$ Therefore the spectral sequence $E:={}_{\rm I}E$ converges to $\Phi(H^n(a^\bullet)),$ and since $I^{p,\bullet}$ is an injective resolution of $a^p$, we have $E_1^{pq}=H_{\rm II}^q(\Phi(I^{p,\bullet}))={\bf R}^q\Phi (a^\bullet).$
\end{proof}
\begin{Corollary}\label{corollary_spectral} Let
$\mathcal F^\bullet$ be a bounded below complex of representations
of a category $\mathcal C$ with $\ilimit$-acyclic cohomologies.
Then there exists a cohomological  spectral sequence $E$ such that
\begin{equation}E \Rightarrow \ilimit\: H^n(\mathcal F^\bullet) \ \ \text{ and } \ \ E_1^{pq}=\ilimit^q \mathcal F^p.
\end{equation}
\end{Corollary}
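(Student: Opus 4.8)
The plan is to observe that this corollary is nothing more than the previous proposition specialized to the particular left exact functor we care about, so the entire task reduces to checking that the hypotheses of that proposition are met and then translating its conclusion into the notation of higher limits. Concretely, I would apply the proposition with $\mathcal A = \mathsf{Mod}(k)^{\mathcal C}$, $\mathcal B = \mathsf{Mod}(k)$, and $\Phi = \ilimit$, taking $a^\bullet = \mathcal F^\bullet$.

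First I would verify the abstract hypotheses. The source category $\mathsf{Mod}(k)^{\mathcal C}$ is abelian with enough injectives (this was recorded when representations were introduced), and $\mathsf{Mod}(k)$ is abelian with enough injectives, so both the source and target of $\Phi$ satisfy the requirements. Next, $\ilimit$ is a right adjoint to the diagonal functor, hence left exact and additive, which is exactly the hypothesis imposed on $\Phi$. Finally, the complex $\mathcal F^\bullet$ is assumed bounded below, so $\mathcal F^\bullet \in \mathsf{Com}^+(\mathsf{Mod}(k)^{\mathcal C})$, and its cohomologies $H^n(\mathcal F^\bullet)$ are assumed $\ilimit$-acyclic, which is precisely the requirement that the cohomologies be $\Phi$-acyclic.

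Having checked the hypotheses, I would then simply read off the conclusion. The proposition produces a cohomological spectral sequence with $E_1^{pq} = {\bf R}^q \Phi(a^p)$ converging to $\Phi(H^n(a^\bullet))$. Substituting $\Phi = \ilimit$ and using the definition $\ilimit^q = {\bf R}^q\!\ilimit$ gives $E_1^{pq} = \ilimit^q \mathcal F^p$, while the abutment $\Phi(H^n(a^\bullet))$ becomes $\ilimit\, H^n(\mathcal F^\bullet)$, exactly as claimed.

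I do not expect any genuine obstacle here: the corollary is a direct instantiation, and the only point requiring any attention is making sure each hypothesis of the proposition has a matching fact already established in the section (enough injectives in the representation category, left exactness of $\ilimit$ as a right adjoint, and the identification $\ilimit^q = {\bf R}^q\!\ilimit$). Once those are in hand, the proof is a single sentence invoking the proposition.
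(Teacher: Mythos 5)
Your proposal is correct and matches the paper's intent exactly: the corollary is stated immediately after the general proposition on spectral sequences for left exact functors, with no separate proof given, precisely because it is the direct instantiation $\Phi=\ilimit$, $\mathcal A=\mathsf{Mod}(k)^{\mathcal C}$, $\mathcal B=\mathsf{Mod}(k)$ that you describe. Your verification of the hypotheses (enough injectives, left exactness of $\ilimit$ as a right adjoint, and $\ilimit^q={\bf R}^q\!\ilimit$) is exactly the routine checking the authors left to the reader.
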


\section{Representations of categories with pairwise coproducts.}\label{section_Representations_of_categories_with_pairwise_coproducts}

\subsection{Shifting of higher limits.}
 Let $\mathcal{C}$ be a category with pairwise coproducts i.e. for any objects  $c_1,c_2\in \mathcal{C}$ there exists the coproduct $c_1 \overset{i_1}{\longrightarrow} c_1\sqcup c_2 \overset{i_2}{\longleftarrow} c_2$ in $\mathcal{C}$ ($\mathcal{C}$ is not necessary has an initial object).
For any representation $\mathcal{F}$ of $\mathcal{C}$ there is a
natural transformation
\begin{equation}{\sf T}_{\mathcal F,c}:\mathcal{F}(c)\oplus \mathcal{F}(c) \longrightarrow \mathcal{F}(c\sqcup c),
\end{equation}
given by ${\sf T}_{\mathcal
F,c}=(\mathcal{F}(i_1),\mathcal{F}(i_2)).$ A representation
$\mathcal{F}$ of $\mathcal{C}$ is said to be {\it additive (resp.
monoadditive, epiadditive)} if ${\sf T}_{\mathcal F}$  is an
isomorphism (resp. monomorphism, epimorphism). If we put $\mathcal
F_{\sf sq}(c)=\mathcal F(c\sqcup c)$, we can write the previous
natural transformation as ${\sf T}_{\mathcal
F}:\mathcal{F}^2\longrightarrow \mathcal{F}_{\sf sq}.$ By a
representation $\mathcal F$ we construct the representation
$\Sigma\mathcal{F}:=\mathrm{coker}({\sf T}_{\mathcal F}),$ and by
recursion $\Sigma^{n+1} \mathcal{F}=\Sigma(\Sigma^n\mathcal{F})$.
Then a representation $\mathcal{F}$ is monoadditive if and only if
the following sequence is exact
\begin{equation}0\longrightarrow \mathcal{F}^2\overset{\sf T_{\mathcal F}}\longrightarrow \mathcal{F}_{\sf sq} \longrightarrow \Sigma\mathcal{F}\longrightarrow 0
\end{equation}
\begin{Proposition}\label{proposition_cokernel_limits}
Let $\mathcal{C}$ be a category with pairwise coproducts and $\mathcal{F}$
be a monoadditive representation of $\mathcal{C}.$ Then there is
an isomorphism
\begin{equation}\ilimit^n\:\mathcal{F}\cong \ilimit^{n-1}\: \Sigma\mathcal{F}
\end{equation}
for any $n\geq 0.$
\end{Proposition}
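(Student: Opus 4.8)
The plan is to turn the defining short exact sequence of $\Sigma\mathcal{F}$ into a long exact sequence of higher limits and then to identify the map induced by $\mathsf{T}_{\mathcal F}$ as a split epimorphism whose kernel is a copy of $\ilimit^n\mathcal F$. Since $\mathcal F$ is monoadditive there is a short exact sequence $0\to\mathcal F^2\to\mathcal F_{\sf sq}\to\Sigma\mathcal F\to 0$, and because $\ilimit$ is left exact with derived functors $\ilimit^i$, applying it yields
$$\cdots\to\ilimit^{n-1}\mathcal F^2\to\ilimit^{n-1}\mathcal F_{\sf sq}\to\ilimit^{n-1}\Sigma\mathcal F\to\ilimit^n\mathcal F^2\to\ilimit^n\mathcal F_{\sf sq}\to\cdots.$$
All the real work is in understanding the horizontal maps $\mathsf T_*$ induced by $\mathsf T_{\mathcal F}$.

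First I would compare $\mathcal F_{\sf sq}=\mathcal F\circ\Delta$, where $\Delta\colon\mathcal C\to\mathcal C$ is the functor $c\mapsto c\sqcup c$, with $\mathcal F$ itself. The coproduct inclusions give two natural transformations $i_1,i_2\colon\mathsf{Id}_{\mathcal C}\to\Delta$, and the two components of $\mathsf T_{\mathcal F}$ are precisely $\mathcal F(i_1),\mathcal F(i_2)\colon\mathcal F\to\mathcal F_{\sf sq}$. I would like to apply Corollary \ref{corollary_A_naturalness} to conclude that $\varphi:=\mathcal F(i_1)_*$ and $\psi:=\mathcal F(i_2)_*$ are isomorphisms $\ilimit^n\mathcal F\xrightarrow{\cong}\ilimit^n\mathcal F_{\sf sq}$; for this I must verify that both $\mathsf{Id}_{\mathcal C}$ and $\Delta$ satisfy the hypothesis of Proposition \ref{proposition_A}. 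The identity functor is immediate, since $\mathsf{Id}\!\downarrow\!d=\mathcal C\!\downarrow\!d$ has the terminal object $(d,\mathsf{id}_d)$ and is therefore contractible.

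The main obstacle is thus to show that $\Delta\!\downarrow\!d$ is $k$-acyclic for every $d$. The key observation I would exploit is that this comma category has pairwise coproducts: given objects $(c,\alpha)$ and $(c',\alpha')$ with $\alpha\colon c\sqcup c\to d$ and $\alpha'\colon c'\sqcup c'\to d$, the object $c\sqcup c'$, equipped with the map to $d$ induced by $\alpha$ and $\alpha'$, is their coproduct in $\Delta\!\downarrow\!d$. Hence it suffices to establish the auxiliary fact that any category $\mathcal D$ with pairwise coproducts is contractible, and so $k$-acyclic. I would prove this by fixing $d_0\in\mathcal D$ and considering the endofunctor $F=(-)\sqcup d_0$: the first and second coproduct inclusions define natural transformations $\mathsf{Id}_{\mathcal D}\to F$ and $\mathrm{const}_{d_0}\to F$, and since a natural transformation induces a homotopy of classifying spaces (\cite[\S 1 Proposition 2]{Quillen}), one gets $\mathsf{id}_{\mathsf B\mathcal D}\simeq\mathsf BF\simeq\mathrm{const}$. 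Applied to $\mathcal D=\Delta\!\downarrow\!d$ this completes the verification of the hypothesis of Proposition \ref{proposition_A} for $\Delta$.

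Granting this, the endgame is formal. Since $\ilimit^n$ is additive, $\ilimit^n\mathcal F^2=(\ilimit^n\mathcal F)^2$ and $\mathsf T_*(x,y)=\varphi(x)+\psi(y)$ with $\varphi,\psi$ the isomorphisms above; consequently $\mathsf T_*$ is a split epimorphism, already hitting everything through the first summand, whose kernel $\{(x,y):\varphi(x)=-\psi(y)\}$ is isomorphic to $\ilimit^n\mathcal F$. Feeding this into the long exact sequence, the surjectivity of every $\mathsf T_*$ forces the map $\ilimit^{n-1}\mathcal F_{\sf sq}\to\ilimit^{n-1}\Sigma\mathcal F$ to vanish, so the connecting homomorphism gives $\ilimit^{n-1}\Sigma\mathcal F\cong\ker(\mathsf T_*\colon\ilimit^n\mathcal F^2\to\ilimit^n\mathcal F_{\sf sq})\cong\ilimit^n\mathcal F$, which is the asserted isomorphism for $n\ge 1$. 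For $n=0$ the same map $\mathsf T_*$ on $\ilimit$ is simultaneously injective, by monoadditivity and left exactness, and surjective, whence $\ilimit\mathcal F=0$, in accordance with the convention $\ilimit^{-1}\Sigma\mathcal F=0$.
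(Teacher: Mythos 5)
Your proof is correct and follows essentially the same route as the paper: you reprove its Lemma 3.5 (a category with pairwise coproducts is contractible) and Lemma 3.6 (the comma category ${\sf sq}\!\downarrow\! d$ has pairwise coproducts, so Proposition \ref{proposition_A} and Corollary \ref{corollary_A_naturalness} make $(i_1)_*,(i_2)_*$ isomorphisms), and then extract the isomorphism from the long exact sequence exactly as the paper does via its bicartesian squares. The only cosmetic difference is that you phrase the final step through the kernel $\{(x,y):\varphi(x)=-\psi(y)\}$ rather than through the split short exact sequences, and you handle $n=0$ explicitly where the paper records it as a separate corollary.
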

\begin{Corollary}
If $\mathcal{F}$ is a monoadditive representation of a category
with pairwise coproducts, then $\ilimit\,\mathcal{F}=0.$
\end{Corollary}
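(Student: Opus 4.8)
The plan is to realise $\mathcal F_{\sf sq}$ as a composite $\mathcal F\circ\delta$, where $\delta\colon\mathcal C\to\mathcal C$ is the ``doubling'' endofunctor $c\mapsto c\sqcup c$, $f\mapsto f\sqcup f$ (well defined on a fixed choice of coproducts), and to treat the two coproduct inclusions as natural transformations $i_1,i_2\colon\mathsf{Id}_{\mathcal C}\Rightarrow\delta$. First I would record the short exact sequence $0\to\mathcal F^2\xrightarrow{({\sf T}_{\mathcal F})}\mathcal F_{\sf sq}\to\Sigma\mathcal F\to 0$, which is exact precisely because $\mathcal F$ is monoadditive, and apply the long exact sequence of the derived functors $\ilimit^n$. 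Since $\ilimit^n$ is additive it commutes with finite direct sums, so $\ilimit^n\mathcal F^2=(\ilimit^n\mathcal F)^{\oplus 2}$, and the map induced by ${\sf T}_{\mathcal F}$ is $(x,y)\mapsto \mathcal F(i_1)_*x+\mathcal F(i_2)_*y$. Everything then reduces to understanding the two maps $\mathcal F(i_1)_*,\mathcal F(i_2)_*\colon\ilimit^n\mathcal F\to\ilimit^n\mathcal F_{\sf sq}$.

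The heart of the argument, and what I expect to be the main obstacle, is to verify that $\delta$ satisfies the hypothesis of Proposition \ref{proposition_A}, i.e. that the comma category $\delta\!\downarrow\! d$ is $k$-acyclic (in fact contractible) for every $d\in\mathcal C$. Using the universal property of the coproduct I would identify $\delta\!\downarrow\! d$ with the category of triples $(c,\alpha_1,\alpha_2)$, $\alpha_1,\alpha_2\colon c\to d$, a morphism $(c,\alpha_1,\alpha_2)\to(c',\alpha_1',\alpha_2')$ being an $f\colon c\to c'$ with $\alpha_1'f=\alpha_1$ and $\alpha_2'f=\alpha_2$. This category is nonempty, since it contains $(d,\mathsf{id}_d,\mathsf{id}_d)$, and it inherits pairwise coproducts from $\mathcal C$: the coproduct of $(c,\alpha_1,\alpha_2)$ and $(c',\alpha_1',\alpha_2')$ is $(c\sqcup c',\beta_1,\beta_2)$ with $\beta_j$ induced by $(\alpha_j,\alpha_j')$. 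I would then use (and prove in passing) the general fact that a nonempty category $\mathcal E$ with pairwise coproducts is contractible: fixing $e_0\in\mathcal E$, the two inclusions give natural transformations $\mathsf{Id}_{\mathcal E}\Rightarrow(-\sqcup e_0)\Leftarrow\mathrm{const}_{e_0}$, and since a natural transformation induces a homotopy of the classifying maps, $\mathrm{id}_{\mathsf B\mathcal E}$ is homotopic to a constant map. Hence $\mathsf B(\delta\!\downarrow\! d)$ is contractible and $\delta$ satisfies the condition of Proposition \ref{proposition_A}. The delicate point is exactly that $\delta$ is in general \emph{not} a left adjoint (the category $\delta\!\downarrow\! d$ has pairwise coproducts but typically no terminal object), so Lemma \ref{lemma_adjoint} does not apply and the acyclicity must be produced by hand.

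Granting this, Corollary \ref{corollary_A_naturalness} applied to $i_1$ and $i_2$ shows that $\mathcal F(i_1)_*$ and $\mathcal F(i_2)_*$ are isomorphisms, and the naturality of $\theta$ in $\Phi$ (the Remark after Proposition \ref{proposition_A}) forces them to be the \emph{same} isomorphism $\phi\colon\ilimit^n\mathcal F\xrightarrow{\cong}\ilimit^n\mathcal F_{\sf sq}$, namely $\theta_\delta\theta_{\mathsf{Id}}^{-1}$. Consequently $({\sf T}_{\mathcal F})_*$ is the map $(x,y)\mapsto\phi(x+y)$, which is surjective with kernel the anti-diagonal $\{(x,-x)\}\cong\ilimit^n\mathcal F$. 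Feeding this into the long exact sequence, surjectivity in every degree makes the maps $\ilimit^m\mathcal F_{\sf sq}\to\ilimit^m\Sigma\mathcal F$ vanish; hence the connecting homomorphism $\ilimit^{n-1}\Sigma\mathcal F\to\ilimit^n\mathcal F^2$ is injective with image $\ker\big(({\sf T}_{\mathcal F})_*\big)\cong\ilimit^n\mathcal F$, giving the desired isomorphism $\ilimit^{n-1}\Sigma\mathcal F\cong\ilimit^n\mathcal F$ for $n\ge 1$. The case $n=0$, where $\ilimit^{-1}\Sigma\mathcal F=0$, amounts to $\ilimit\mathcal F=0$: left-exactness makes $({\sf T}_{\mathcal F})_*$ injective on $\ilimit$, but its kernel is $\cong\ilimit\mathcal F$, forcing $\ilimit\mathcal F=0$ (the corollary stated just below).
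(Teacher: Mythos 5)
Your proposal is correct and follows essentially the same route as the paper: the corollary there is just the $n=0$ case of Proposition \ref{proposition_cokernel_limits}, whose proof is exactly the argument you reconstruct — contractibility of the comma category over the squaring functor via pairwise coproducts, Corollary \ref{corollary_A_naturalness} applied to $i_1,i_2$ to see that $(i_1)_*,(i_2)_*$ are isomorphisms, and the long exact sequence of the monoadditivity short exact sequence, with left-exactness of $\ilimit$ forcing the kernel $\cong\ilimit\mathcal F$ of $({\sf T}_{\mathcal F})_*$ to vanish. Your extra observation that $(i_1)_*=(i_2)_*$ (via naturality of $\theta_\Phi$) is correct but not needed for this step, since each being an isomorphism already identifies the kernel with a copy of $\ilimit\mathcal F$.
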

\begin{Corollary}\label{corollary_monoadditive}
If $\mathcal{F}$ is a representation of a category with pairwise coproducts such that the functor $\Sigma^n\mathcal{F}$
is monoadditive for $0\leq n<l$, then
\begin{equation}\ilimit^n\: \mathcal{F}=\ilimit^{n-l}\: \Sigma^l\mathcal{F}
\end{equation} for any $n\geq 0.$ In particular, $\ilimit^n \mathcal{F}=0$ for $0\leq n<l.$
\end{Corollary}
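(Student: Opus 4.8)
The plan is to apply $\ilimit^n$ to the defining short exact sequence
\[0\longrightarrow \mathcal{F}^2 \overset{{\sf T}_{\mathcal F}}{\longrightarrow} \mathcal{F}_{\sf sq} \overset{p}{\longrightarrow} \Sigma\mathcal{F}\longrightarrow 0\]
coming from monoadditivity, and to read off the isomorphism from the associated long exact sequence. Since $\ilimit$ is additive it commutes with the finite biproduct, so $\ilimit^n\mathcal{F}^2=(\ilimit^n\mathcal{F})^2$; writing $L^n=\ilimit^n\mathcal{F}$ and $Q^n=\ilimit^n\mathcal{F}_{\sf sq}$, the long exact sequence reads
\[\cdots\longrightarrow Q^{n-1}\overset{p_*}{\longrightarrow} \ilimit^{n-1}\Sigma\mathcal{F} \overset{\partial}{\longrightarrow} (L^n)^2 \overset{{\sf T}_*}{\longrightarrow} Q^n \overset{p_*}{\longrightarrow} \ilimit^n\Sigma\mathcal{F}\longrightarrow\cdots .\]
Everything then hinges on computing the map ${\sf T}_*$.

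First I would analyse the two components of ${\sf T}_{\mathcal F}$. They are $\mathcal{F}(i_1)$ and $\mathcal{F}(i_2)$, induced by the coproduct inclusions, that is by the two natural transformations $i_1,i_2\colon\mathsf{Id}_{\mathcal C}\Rightarrow D$, where $D(c)=c\sqcup c$ and $\mathcal{F}_{\sf sq}=\mathcal{F}D$. The key point is that $i_1$ and $i_2$ induce the \emph{same} homomorphism on higher limits. Indeed, for any functor $\Phi$ there is a comparison map $\theta_\Phi$ of restriction along $\Phi$, natural in $\Phi$; as the Remark after Proposition \ref{proposition_A} indicates, this naturality ultimately rests on the naturality of the degree-zero map $\theta^0_\Phi$ and therefore applies to $i_1,i_2$ even though $D$ need not satisfy the acyclicity hypothesis. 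Applied to $i_j\colon\mathsf{Id}_{\mathcal C}\Rightarrow D$ it gives $(\mathcal{F}i_j)_*\circ\theta_{\mathsf{Id}}=\theta_D$ for $j=1,2$, and $\theta_{\mathsf{Id}}=\mathsf{id}$, whence $(\mathcal{F}i_1)_*=(\mathcal{F}i_2)_*=\theta_D$. Dually, the fold map $\nabla\colon D\Rightarrow\mathsf{Id}_{\mathcal C}$ yields $(\mathcal{F}\nabla)_*\circ\theta_D=\mathsf{id}$, so $\theta_D$ is a split monomorphism. Consequently ${\sf T}_*(a,b)=\theta_D(a+b)$, its kernel is the antidiagonal $\{(a,-a)\}\cong L^n$, and its cokernel is $K^n:=\ker\big((\mathcal{F}\nabla)_*\big)$.

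The main obstacle is to show that $K^n=0$, i.e.\ that $(\mathcal{F}\nabla)_*\colon Q^n\to L^n$ is an isomorphism, equivalently $\ilimit^n\mathcal{F}_{\sf sq}\cong\ilimit^n\mathcal{F}$. Here I would use the adjunction $\sqcup\dashv\Delta$, where $\sqcup\colon\mathcal{C}\times\mathcal{C}\to\mathcal{C}$ is the pairwise coproduct, $\Delta$ the diagonal, and whose counit is precisely $\nabla$. As $\sqcup$ is a left adjoint, Lemma \ref{lemma_adjoint} supplies an isomorphism $\ilimit^n_{\mathcal C}\mathcal{F}\cong\ilimit^n_{\mathcal C\times\mathcal C}(\mathcal{F}\sqcup)$, while $\mathcal{F}_{\sf sq}=\Delta^*(\mathcal{F}\sqcup)$. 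Thus $K^n=0$ is equivalent to the assertion that restriction along the diagonal, $\theta_\Delta\colon\ilimit^n_{\mathcal C\times\mathcal C}(\mathcal{F}\sqcup)\to\ilimit^n_{\mathcal C}\mathcal{F}_{\sf sq}$, is an isomorphism for the particular representation $\mathcal{F}\sqcup$. I expect this to be the crux: $\Delta$ satisfies the hypothesis of Proposition \ref{proposition_A} only when $\mathcal{C}$ has products, so one cannot invoke it blindly; instead one must exploit the coproduct-compatibility of $\mathcal{F}\sqcup$ to produce the missing acyclicity (for instance by contracting the relevant comma categories by hand, or by a direct cochain computation), and pinning this down is the step I anticipate will require the real work.

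Granting $K^n=0$ for all $n$, the long exact sequence breaks into short pieces. Since ${\sf T}_*^{\,n-1}$ is then surjective, exactness at $Q^{n-1}$ forces $p_*^{\,n-1}=0$; hence $\partial$ is injective with image $\ker({\sf T}_*^{\,n})\cong L^n$, so $\partial$ induces an isomorphism $\ilimit^{n-1}\Sigma\mathcal{F}\xrightarrow{\ \cong\ }\ilimit^n\mathcal{F}$, which is exactly the assertion. For $n=0$ left exactness of $\ilimit$ makes ${\sf T}_*^{\,0}$ injective, while its kernel is the antidiagonal $\cong L^0$; hence $\ilimit\,\mathcal{F}=L^0=0$, recovering the first corollary.
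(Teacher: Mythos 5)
Your overall strategy is the same as the paper's proof of Proposition \ref{proposition_cokernel_limits} (of which the Corollary is just an $l$-fold iteration): pass to the long exact sequence of the defining extension $\mathcal{F}^2\mono\mathcal{F}_{\sf sq}\epi\Sigma\mathcal{F}$ and show that ${\sf T}_*$ in each degree is the composite of the codiagonal with the isomorphism $\ilimit^n\mathcal{F}\cong\ilimit^n\mathcal{F}_{\sf sq}$, so that its kernel is a copy of $\ilimit^n\mathcal{F}$ and its cokernel vanishes. But the step you explicitly leave open --- that $\theta_{\sf sq}$ (equivalently $(i_k)_*\colon\ilimit^n\mathcal{F}\to\ilimit^n\mathcal{F}_{\sf sq}$) is an \emph{isomorphism} and not merely a split monomorphism --- is the entire content of the argument, and without it your long exact sequence only yields a short exact sequence $0\to K^{n-1}\to\ilimit^{n-1}\Sigma\mathcal{F}\to\ilimit^{n}\mathcal{F}\to 0$, i.e.\ a surjection rather than the claimed isomorphism. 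So as written the proof is incomplete at its crux.

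The missing ingredient is exactly the paper's Lemma \ref{Lemma_coproduct_projections}, and it is cheaper than you anticipate: the comma category ${\sf sq}\!\downarrow\! c_0$ is isomorphic to the category of triples $(c,\alpha_1\colon c\to c_0,\alpha_2\colon c\to c_0)$, and this category again has pairwise coproducts, namely $(c,\alpha_1,\alpha_2)\sqcup(c',\alpha_1',\alpha_2')=(c\sqcup c',\alpha_1+\alpha_1',\alpha_2+\alpha_2')$. By Lemma \ref{lemma_coproduct_contractible} it is therefore contractible, so ${\sf sq}$ satisfies the hypothesis of Proposition \ref{proposition_A}, and Corollary \ref{corollary_A_naturalness} applied to $i_k\colon{\sf Id}\to{\sf sq}$ gives that $(i_k)_*$ is an isomorphism. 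Note that this also repairs a secondary weakness in your argument: you invoke the naturality $\alpha_*\theta_{\Phi_1}=\theta_{\Phi_2}$ for $D={\sf sq}$ while conceding that $D$ "need not satisfy the acyclicity hypothesis," whereas the paper's Remark only asserts that naturality for functors which do satisfy it --- but in fact ${\sf sq}$ does, by the observation above, so no extension of the Remark is needed. Your alternative route via the adjunction $\sqcup\dashv\Delta$ reduces to showing $\Delta\!\downarrow\!(d_1,d_2)$ is acyclic, and that comma category is contractible by the very same "it has coproducts" argument, so that path also closes once you make this observation. Finally, having proved the one-step statement, you should still say the words "induct on $l$, using the hypothesis that $\Sigma^j\mathcal{F}$ is monoadditive for each $j<l$" to obtain the Corollary as stated.
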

\begin{Corollary}
If $\mathcal{F}$ is an additive representation of a category with pairwise
coproducts, then $\ilimit^n\mathcal{F}=0$ for any $n\geq 0.$
\end{Corollary}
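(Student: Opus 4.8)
The plan is to deduce this from the iterated dimension-shift of Corollary \ref{corollary_monoadditive}, after noting that additivity collapses the entire $\Sigma$-tower of $\mathcal{F}$ to zero. The guiding observation is that additivity is strictly stronger than monoadditivity: if $\mathcal{F}$ is additive then ${\sf T}_{\mathcal F}:\mathcal{F}^2\to\mathcal{F}_{\sf sq}$ is an isomorphism, hence in particular a monomorphism, so $\mathcal{F}$ is monoadditive.

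First I would compute $\Sigma\mathcal{F}$. By construction $\Sigma\mathcal{F}=\mathrm{coker}({\sf T}_{\mathcal F})$, and the cokernel of an isomorphism is zero, so $\Sigma\mathcal{F}=0$. Since $\Sigma$ is applied recursively, every higher power also vanishes: $\Sigma^m\mathcal{F}=0$ for all $m\geq 1$. The zero representation is trivially monoadditive (the relevant map ${\sf T}$ is the zero isomorphism $0\to 0$), and $\Sigma^0\mathcal{F}=\mathcal{F}$ is monoadditive by the observation above. Therefore $\Sigma^m\mathcal{F}$ is monoadditive for every $m\geq 0$.

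Consequently the hypothesis of Corollary \ref{corollary_monoadditive} --- that $\Sigma^m\mathcal{F}$ be monoadditive for $0\leq m<l$ --- is satisfied for arbitrarily large $l$. Fixing any $n\geq 0$ and choosing $l>n$, that corollary yields $\ilimit^n\mathcal{F}=\ilimit^{n-l}\Sigma^l\mathcal{F}=0$ (indeed its ``in particular'' clause already records $\ilimit^n\mathcal{F}=0$ for $0\leq n<l$). As $n$ was arbitrary, this proves $\ilimit^n\mathcal{F}=0$ for all $n\geq 0$. There is no genuine obstacle here: the entire content is the formal identity $\Sigma\mathcal{F}=\mathrm{coker}({\sf T}_{\mathcal F})=0$ for additive $\mathcal{F}$, after which the vanishing is immediate from the already-established shift. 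Alternatively one could argue one degree at a time directly from Proposition \ref{proposition_cokernel_limits}, using $\ilimit^n\mathcal{F}\cong\ilimit^{n-1}\Sigma\mathcal{F}=0$ for $n\geq 1$ together with the monoadditive $n=0$ case separately; the route through Corollary \ref{corollary_monoadditive} merely packages both at once.
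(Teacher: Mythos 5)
Your proof is correct and is exactly the intended argument: additivity forces ${\sf T}_{\mathcal F}$ to be an isomorphism, hence $\mathcal{F}$ is monoadditive with $\Sigma\mathcal{F}=\mathrm{coker}({\sf T}_{\mathcal F})=0$, and the vanishing of all $\ilimit^n\mathcal{F}$ then follows from corollary \ref{corollary_monoadditive} (equivalently, degree by degree from proposition \ref{proposition_cokernel_limits}), which is precisely why the paper states this corollary without further proof. Nothing is missing.
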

In order to prove  proposition \ref{proposition_cokernel_limits},
we need to prove two lemmas. The following lemma  seems to be well-known, but we provide a proof for completeness.

\begin{Lemma}\label{lemma_coproduct_contractible} A category $\mathcal{C}$ with pairwise coproducts is contractible.
\end{Lemma}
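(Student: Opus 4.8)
The plan is to show that the classifying space $\mathsf B\mathcal C$ is contractible by exhibiting a homotopy that contracts it to a point, using the pairwise coproducts to build a natural transformation. The key observation is that for a fixed object $c_0\in\mathcal C$, the functor $c\mapsto c\sqcup c_0$ comes equipped with a natural coprojection $i_c\colon c\to c\sqcup c_0$, which provides a natural transformation from the identity functor $\mathsf{Id}_\mathcal C$ to the functor $(-)\sqcup c_0$. Simultaneously, the coprojection $j\colon c_0\to c\sqcup c_0$ need not be natural in $c$, but the \emph{constant} functor $\mathsf{const}_{c_0}$ admits a natural transformation to $(-)\sqcup c_0$ via these second coprojections, and I must check naturality of this second transformation carefully.

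First I would fix an arbitrary object $c_0\in\mathcal C$ and define the functor $\Phi\colon\mathcal C\to\mathcal C$ by $\Phi(c)=c\sqcup c_0$, with $\Phi(f)=f\sqcup\mathsf{id}_{c_0}$ on morphisms. Next I would produce two natural transformations landing in $\Phi$: the transformation $\eta\colon\mathsf{Id}_\mathcal C\Rightarrow\Phi$ whose component at $c$ is the first coprojection $c\to c\sqcup c_0$, and the transformation $\mu\colon\mathsf{const}_{c_0}\Rightarrow\Phi$ whose component at $c$ is the second coprojection $c_0\to c\sqcup c_0$. I would verify the naturality squares from the universal property of the coproduct: for a morphism $f\colon c\to c'$, the identity $\Phi(f)\circ i_c=i_{c'}\circ f$ holds because both sides are the composite $c\to c'\to c'\sqcup c_0$ along the first leg, and $\Phi(f)\circ j_c=j_{c'}$ holds because $f\sqcup\mathsf{id}_{c_0}$ restricted to the second summand is the identity on $c_0$.

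Then I would apply the fact, recalled in the subsection on preserving of higher limits (citing \cite[\S 1 proposition 2]{Quillen}), that a natural transformation between functors induces a homotopy between the corresponding maps of classifying spaces. Concretely, $\eta$ gives a homotopy $\mathsf B\mathsf{Id}_\mathcal C\simeq\mathsf B\Phi$, i.e.\ $\mathsf{id}_{\mathsf B\mathcal C}\simeq\mathsf B\Phi$, while $\mu$ gives a homotopy $\mathsf B\,\mathsf{const}_{c_0}\simeq\mathsf B\Phi$. Since $\mathsf B\,\mathsf{const}_{c_0}$ is the constant map at the point $\mathsf B(c_0)$, composing these two homotopies yields $\mathsf{id}_{\mathsf B\mathcal C}\simeq\mathsf{const}$, which is exactly the statement that $\mathsf B\mathcal C$ is contractible.

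The main obstacle I anticipate is the naturality of $\mu$: unlike the first coprojection, the second coprojection into $c\sqcup c_0$ is only natural once one insists on using $\Phi(f)=f\sqcup\mathsf{id}_{c_0}$ as the chosen morphism on coproducts, so I would be careful to define $\Phi$ on morphisms via the universal property (rather than assuming any coherent choice of coproducts) and to confirm that this assignment is genuinely functorial. A secondary technical point is that $\mathcal C$ need not have an initial object, so the argument must avoid it; fixing $c_0$ as a basepoint and producing the explicit contraction above circumvents this, and no functoriality issue beyond the one just noted should arise.
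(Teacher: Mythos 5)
Your proposal is correct and follows essentially the same route as the paper: fix $c_0$, form $\Phi(-)=-\sqcup c_0$, obtain natural transformations $\mathsf{Id}_{\mathcal C}\Rightarrow\Phi$ and $\mathsf{Const}_{c_0}\Rightarrow\Phi$, and conclude via the fact that natural transformations induce homotopies on classifying spaces. Your extra care about defining $\Phi$ on morphisms by the universal property and checking naturality of the second coprojection is a welcome elaboration of details the paper leaves implicit, but it is not a different argument.
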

\begin{proof}
 Chose an object $c_0\in
\mathcal{C}$ and consider the functor $\Phi:\mathcal{C}\to
\mathcal{C}$ given by the formula $\Phi(-)=-\sqcup c_0.$ By
definition of coproduct, we obtain natural transformations from
the identity functor $\mathsf{Id}_\mathcal{C}\to \Phi$ and from
the constant functor $\mathsf{Const}_{c_0}\to \Phi.$ Using the
fact that a natural transformation of two functors induces  a
homotopy between the corresponding continuous maps on the
classifying spaces, we obtain that the identity map of the
classifying space $\mathsf{id}_{\mathsf{B}\mathcal{C}}$ is
homotopic to $\mathsf{B}\Phi:\mathsf{B}\mathcal{C}\to
\mathsf{B}\mathcal{C},$ and the map $\mathsf{B}\Phi$ is homotopic
to the constant map $\mathsf{const}_{c_0}$. Therefore,
$\mathsf{id}_{\mathsf{B}\mathcal{C}}\sim \mathsf{const}_{c_0},$
and hence, $\mathcal{C}$ is contractible.
\end{proof}
\begin{Lemma}\label{Lemma_coproduct_projections}
Let $\mathcal{F}$ be a representation of a category with coproducts. Then the morphisms $i_1,i_2:c\to c\sqcup c$ induce isomorphsms
\begin{equation}(i_k)_*:\ilimit^n\: \mathcal{F}(c) \overset{\cong}{\longrightarrow} \ilimit^n\:  \mathcal{F}(c\sqcup c).
\end{equation}
\end{Lemma}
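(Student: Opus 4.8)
The plan is to realise the maps $(i_k)_*$ as induced by a natural transformation between two endofunctors of $\mathcal{C}$, each satisfying the hypothesis of proposition \ref{proposition_A}, and then to invoke corollary \ref{corollary_A_naturalness}. Concretely, after fixing a choice of coproducts, let $D:\mathcal{C}\to\mathcal{C}$ be the functor $D(c)=c\sqcup c$, $D(f)=f\sqcup f$. The coproduct inclusions assemble into natural transformations $i_1,i_2:\mathsf{Id}_\mathcal{C}\to D$, since the naturality square $(f\sqcup f)\,i_k=i_k\,f$ is precisely the defining property of $f\sqcup f$. Because $\mathcal{F}\mathsf{Id}_\mathcal{C}=\mathcal{F}$ and $\mathcal{F}D=\mathcal{F}_{\sf sq}$, the transformation $i_k$ induces exactly the map in question, $(i_k)_*:\ilimit^n\mathcal{F}\to\ilimit^n\mathcal{F}_{\sf sq}$. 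It therefore suffices to check that both $\mathsf{Id}_\mathcal{C}$ and $D$ satisfy the condition of proposition \ref{proposition_A}.

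For $\mathsf{Id}_\mathcal{C}$ this is immediate: the identity is a left adjoint, so lemma \ref{lemma_adjoint} applies (equivalently, $\mathsf{Id}_\mathcal{C}\!\!\downarrow\!\!d=\mathcal{C}\!\!\downarrow\!\!d$ has the terminal object $(d,\mathsf{id}_d)$). The substantial point, which I expect to carry the weight of the argument, is to show that the comma category $D\!\!\downarrow\!\!d$ is $k$-acyclic for every $d$. Here one cannot appeal to lemma \ref{lemma_adjoint} directly, since $D$ is not a left adjoint: it is the diagonal followed by the coproduct, and the diagonal is a \emph{right} adjoint to the coproduct. Instead the plan is to prove that $D\!\!\downarrow\!\!d$ itself has pairwise coproducts and then to apply lemma \ref{lemma_coproduct_contractible}.

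To carry this out I would first use the universal property of the coproduct to identify an object $(c,\alpha)$ of $D\!\!\downarrow\!\!d$, where $\alpha:c\sqcup c\to d$, with a triple $(c,\alpha_1,\alpha_2)$ where $\alpha_j=\alpha\, i_j:c\to d$; under this identification a morphism $(c,\alpha_1,\alpha_2)\to(c',\alpha'_1,\alpha'_2)$ is a map $f:c\to c'$ with $\alpha'_j\, f=\alpha_j$ for $j=1,2$. Given two objects $(c,\alpha_1,\alpha_2)$ and $(c',\alpha'_1,\alpha'_2)$, I would form $c\sqcup c'$ in $\mathcal{C}$ and let $\beta_j:c\sqcup c'\to d$ be the unique map restricting to $\alpha_j$ on $c$ and to $\alpha'_j$ on $c'$. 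A routine verification with the universal property then shows that $(c\sqcup c',\beta_1,\beta_2)$, equipped with the two structural inclusions (which are morphisms of $D\!\!\downarrow\!\!d$ by construction), is the coproduct of the two objects in $D\!\!\downarrow\!\!d$. Hence $D\!\!\downarrow\!\!d$ has pairwise coproducts, so by lemma \ref{lemma_coproduct_contractible} it is contractible, and in particular $k$-acyclic.

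With both $\mathsf{Id}_\mathcal{C}$ and $D$ satisfying the hypothesis of proposition \ref{proposition_A}, corollary \ref{corollary_A_naturalness} applied to the natural transformation $i_k:\mathsf{Id}_\mathcal{C}\to D$ gives that the induced map $(i_k)_*:\ilimit^n\mathcal{F}\to\ilimit^n\mathcal{F}_{\sf sq}$ is an isomorphism for every $n\geq 0$, which is the assertion of the lemma. The only genuine work is the coproduct construction in $D\!\!\downarrow\!\!d$; everything else is a matter of assembling the cited results.
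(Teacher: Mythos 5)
Your proposal is correct and follows essentially the same route as the paper: the paper also introduces the functor ${\sf sq}(c)=c\sqcup c$, identifies objects of ${\sf sq}\!\downarrow\! c_0$ with triples $(c,\alpha_1,\alpha_2)$, shows this comma category has pairwise coproducts (via $(c\sqcup c',\alpha_1+\alpha_1',\alpha_2+\alpha_2')$) and is therefore contractible by lemma \ref{lemma_coproduct_contractible}, and then applies corollary \ref{corollary_A_naturalness} to the natural transformation $i_k:{\sf Id}_{\mathcal C}\to {\sf sq}$. Your additional observation that ${\sf Id}_{\mathcal C}$ trivially satisfies the hypothesis of proposition \ref{proposition_A} is left implicit in the paper but is the same argument.
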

\begin{proof}We denote by ${\sf sq}$ a functor $\mathcal C \to \mathcal C$ given by $c\mapsto c\sqcup c.$ Then $\mathcal F_{\sf sq}=\mathcal F\circ {\sf sq}.$
An object of the comma category $\mathsf{sq}\!\downarrow\! c_0$ is a pair $(c,\alpha:c\sqcup c\to c_0).$ An arrow $\alpha:c\sqcup c\to c_0$ is defined by the pair of arrows $\alpha_1=\alpha\circ i_1:c\to c_0$ and $\alpha_2=\alpha\circ i_2:c\to c_0.$ Thus the category $\mathsf{sq}\! \downarrow\! c_0$ is isomorphic to the category, whose objects are triples $(c,\alpha_1:c\to c_0,\alpha_2:c\to c_0)$ and morphisms $f:(c,\alpha_1,\alpha_2)\to (c',\alpha_1',\alpha_2')$ are morphisms $f:c\to c'$ such that $\alpha_1'f=\alpha_1$ and $\alpha_2'f=\alpha_2.$ It is easy to see that the object $(c\sqcup c',\alpha_1+\alpha_1',\alpha_2+\alpha_2')$ is the coproduct of objects $(c,\alpha_1,\alpha_2)$ and $(c',\alpha_1',\alpha_2')$ in this category. Hence the category  $\mathsf{sq}\!\downarrow\! c_0$ has coproducts and by lemma \ref{lemma_coproduct_contractible} it is contractible. Therefore, the functor ${\sf sq}$ satisfies the condition of proposition \ref{proposition_A}. Finally, applying corollary \ref{corollary_A_naturalness} to the natural transformation $i_k:{\sf Id}_{\mathcal{C}}\to {\sf sq},$ we obtain the claimed isomorphism.
\end{proof}

\begin{proof}[Proof of proposition \ref{proposition_cokernel_limits}]
Consider the long exact sequence of higher limits assotiated with
the short exact sequence $0\longrightarrow
\mathcal{F} \oplus \mathcal{F}
\longrightarrow \mathcal{F}_{\sf sq} \longrightarrow
\mathcal{F}'\longrightarrow 0.$ Using lemma
\ref{Lemma_coproduct_projections}, we obtain that the homomorphism
$((i_1)_*,(i_2)_*):\ilimit^n\:\mathcal{F}\oplus
\ilimit^n\:\mathcal{F}\longrightarrow
\ilimit^n\mathcal{F}_{\sf sq}$ is an epimorphism, and hence the
map $\ilimit^n\mathcal{F}_{\sf sq}\longrightarrow
\ilimit^n\:\mathcal{F}'$  vanishes. Therefore  we get the short exact sequences
\begin{equation}0\longrightarrow \ilimit^n\: \mathcal{F}'\longrightarrow \ilimit^{n+1}\:\mathcal{F}\oplus \ilimit^{n+1}\:\mathcal{F}\longrightarrow \ilimit^{n+1}\:\mathcal{F}_{\sf sq} \longrightarrow 0,
\end{equation} which are the totalisations of the bicartesian squares
$$\xymatrix{
\ilimit^n\:\mathcal{F}' \ar[rr]^{s_1}\ar[d]^{s_2} && \ilimit^{n+1}\:\mathcal{F}\ar[d]^{(i_1)_*}_{\cong} \\
\ilimit^{n+1}\:\mathcal{F}\ar[rr]^{(i_2)_*}_\cong &&
\ilimit^{n+1}\mathcal{F}_{\sf sq}.}$$ By lemma
\ref{Lemma_coproduct_projections} $(i_1)_*$ and $(i_2)_*$ are
isomorphisms, and hence $s_1$ and $s_2$ are isomorphisms.
\end{proof}

\subsection{Relative additivity and vanishing of higher limits.}
We denote by ${\sf Mod}$ the category of modules over  $k$-algebras. Its objects are pairs $(A,M)$ where $A$ is an algebra, $M$ is an $A$-module and a morphism $(A,M)\to (B,N)$ is a pair $(\varphi,f)$ where $\varphi:A\to B$ is a morphism of algebras and $f:M\to N$ is a linear map such that $f(am)=\varphi(a)f(m).$
If $M$ is an $A$-module, $N$ is a $B$-module and $\varphi:A\to B$ is a homomorphism of algebras, we will denote by $N\!\!\downarrow_\varphi$ the reduced $A$-module, and by $M\!\!\uparrow_\varphi=B\otimes_A M$ the induced $B$-module. If $f$ is obvious we denote $N\!\!\downarrow_A:=N\!\!\downarrow_\varphi$ and $M\!\!\uparrow^B:=M\!\!\uparrow_\varphi.$
Then for a morphism $(\varphi,f):(A,M)\to (B,N),$ the  map $f$ can be considered as an $A$-homomorphism $f:M\to N\!\!\downarrow_\varphi$. Since the functor $\uparrow_\varphi$ is left adjoint to  $\downarrow_\varphi$, a homomorphism $f:M\to N\!\!\downarrow_\varphi$ induces a homomorphism $f^\#:M\!\!\uparrow_\varphi\to N$ given by $f^\#(b\otimes m)=b\cdot f(m).$

Let $\mathcal{C}$ be a category with coproducts and $\mathcal O: \mathcal C\to {\sf Alg}$ be a functor to the category of $k$-algebras. {\it $\mathcal O$-representation} is a functor $\mathcal F:\mathcal C\to {\sf Mod}$ such that the diagram
$$\xymatrix{
 & & {\sf Mod}\ar[d] \\
\mathcal C\ar[rru]^{\mathcal F}\ar[rr]^{\mathcal O} & & {\sf Alg}
}$$
is commutative, where the right-hand functor is given by $(A,M)\mapsto A.$ Hence $\mathcal F(c)$ could be written as $(\mathcal O(c),\mathcal F_{\bf m}(c)),$ where $\mathcal F_{\bf m}(c)$ is a $\mathcal O(c)$-module. We will usually consider $\mathcal F(c)$ not as a pair but as the  vector space $\mathcal F_{\bf m}(c)$ with the fixed structure of $\mathcal O(c)$-module.

For an algebra $A$ we denote by $A^{\rm op}$ the opposite algebra with multiplication $a*b=ba.$ Similarly for a functor $\mathcal O:\mathcal C\to {\sf Alg}$ we denote by $\mathcal O^{\rm op}$ the functor given by $\mathcal O^{\rm op}(c)=\mathcal O(c)^{\rm op}.$ If $\mathcal F$ is an $\mathcal O$-representation and $\mathcal H$ is an $\mathcal O^{\rm op}$-representation, then one can define the representation $\mathcal H\otimes_{\mathcal O}\mathcal F:\mathcal C\to {\sf Mod}(k)$ given by $(\mathcal H\otimes_{\mathcal O}\mathcal F)(c)=\mathcal H(c)\otimes_{\mathcal O(c)}\mathcal F(c).$

Let $\mathcal C$ be a category with pairwise coproducts.  An $\mathcal O$-representation $\mathcal F$ is said to be additive if the morphism
\begin{equation}{\sf T}_{\mathcal F}:\mathcal F(c)\!\!\uparrow_{i_1}\oplus \ \mathcal F(c)\!\!\uparrow_{i_2}\longrightarrow \mathcal F(c\sqcup c)\end{equation}
is an isomorphism of $\mathcal O(c\sqcup c)$-modules, where ${\sf T}_{\mathcal F}=(\mathcal F(i_1)^\#,\mathcal F(i_2)^\#)$.

\begin{Proposition}\label{additive_limit_vanish} Let $\mathcal{C}$ be a category with pairwise coproducts, $\mathcal O:\mathcal C \to {\sf Alg}$ be a functor,  $\mathcal{F}$ be an additive $\mathcal O$-representation and $\mathcal H$ be an $\mathcal O^{\rm op}$-representation. Then
\begin{equation}\ilimit^i\ \mathcal H\otimes_{\mathcal O} \mathcal F =0\ \ \text{ for any } i\geq 0.\end{equation}
\end{Proposition}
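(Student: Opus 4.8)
The plan is to regard $\mathcal G:=\mathcal H\otimes_{\mathcal O}\mathcal F$ as an ordinary representation $\mathcal C\to{\sf Mod}(k)$ and to show that the iterated cokernels $\Sigma^n\mathcal G$ are \emph{all} monoadditive; once this is known, Corollary \ref{corollary_monoadditive} (applied with $l$ arbitrarily large) forces $\ilimit^i\mathcal G=0$ for every $i\geq 0$. To run this induction I would isolate two facts: (a) for \emph{any} $\mathcal O^{\rm op}$-representation $\mathcal H$ the functor $\mathcal H\otimes_{\mathcal O}\mathcal F$ is monoadditive, and (b) $\Sigma(\mathcal H\otimes_{\mathcal O}\mathcal F)$ is again of the form $\mathcal H'\otimes_{\mathcal O}\mathcal F$ for a suitable $\mathcal O^{\rm op}$-representation $\mathcal H'$. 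Granting (a) and (b), induction yields $\Sigma^n\mathcal G\cong\mathcal H^{(n)}\otimes_{\mathcal O}\mathcal F$, which is monoadditive by (a), for every $n$, and the conclusion follows.

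For (a) I would first use additivity of $\mathcal F$ to compute $\mathcal G_{\sf sq}$. Since $\mathcal F(c\sqcup c)\cong\mathcal F(c)\!\!\uparrow_{i_1}\oplus\,\mathcal F(c)\!\!\uparrow_{i_2}$, extension of scalars together with associativity of the tensor product gives a decomposition
$$\mathcal G_{\sf sq}(c)=\mathcal H(c\sqcup c)\otimes_{\mathcal O(c\sqcup c)}\mathcal F(c\sqcup c)\cong \big(\mathcal H(c\sqcup c)\!\!\downarrow_{i_1}\!\otimes_{\mathcal O(c)}\mathcal F(c)\big)\oplus\big(\mathcal H(c\sqcup c)\!\!\downarrow_{i_2}\!\otimes_{\mathcal O(c)}\mathcal F(c)\big),$$
under which ${\sf T}_{\mathcal G}$ becomes $\phi_1\oplus\phi_2$ with $\phi_k=\mathcal H(i_k)\otimes\mathrm{id}_{\mathcal F(c)}$. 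The decisive observation is the existence of the fold map $\nabla\colon c\sqcup c\to c$ with $\nabla i_k=\mathrm{id}_c$: then $\mathcal H(\nabla)$ restricts to an $\mathcal O(c)^{\rm op}$-linear left inverse of $\mathcal H(i_k)\colon\mathcal H(c)\to\mathcal H(c\sqcup c)\!\!\downarrow_{i_k}$, so each $\mathcal H(i_k)$ is a split monomorphism of right $\mathcal O(c)$-modules. Tensoring a split monomorphism with $\mathcal F(c)$ keeps it split mono, so already $\phi_1$ is injective and hence ${\sf T}_{\mathcal G}=\phi_1\oplus\phi_2$ is a monomorphism, i.e. $\mathcal G$ is monoadditive.

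For (b) I would tensor the split short exact sequences $0\to\mathcal H(c)\to\mathcal H(c\sqcup c)\!\!\downarrow_{i_k}\to\mathcal C_k(c)\to 0$, where $\mathcal C_k(c)=\mathrm{coker}\,\mathcal H(i_k)$, over $\mathcal O(c)$ with $\mathcal F(c)$; being split they stay exact, whence $\mathrm{coker}\,\phi_k\cong\mathcal C_k(c)\otimes_{\mathcal O(c)}\mathcal F(c)$ and therefore $\Sigma\mathcal G(c)\cong\big(\mathcal C_1(c)\oplus\mathcal C_2(c)\big)\otimes_{\mathcal O(c)}\mathcal F(c)$. Setting $\mathcal H':=\mathcal C_1\oplus\mathcal C_2$ then identifies $\Sigma\mathcal G$ with $\mathcal H'\otimes_{\mathcal O}\mathcal F$. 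The point that makes $\mathcal H'$ an honest $\mathcal O^{\rm op}$-representation is that $i_k\colon{\sf Id}_{\mathcal C}\Rightarrow{\sf sq}$ is a natural transformation, so $\mathcal H(i_k)\colon\mathcal H\to(\mathcal H\circ{\sf sq})\!\!\downarrow_{i_k}$ is a morphism of $\mathcal O^{\rm op}$-representations and $\mathcal C_k$ is its cokernel computed in ${\sf Mod}(k)^{\mathcal C}$.

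The hard part will be the bookkeeping in (b): I must verify that every identification above is natural in $c$ — the additivity isomorphism for $\mathcal F$, the splitting supplied by the fold map, and restriction of scalars along the natural transformation $\mathcal O(i_k)$ — so that the objectwise isomorphism $\Sigma\mathcal G(c)\cong\mathcal H'(c)\otimes_{\mathcal O(c)}\mathcal F(c)$ upgrades to an isomorphism of representations and the induction can proceed. The algebra-versus-opposite-algebra and induction-versus-restriction distinctions are precisely where side errors would creep in, so I would fix the module conventions (left/right, $\uparrow$, $\downarrow$) carefully before starting the verification.
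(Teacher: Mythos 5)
Your proposal is correct and follows essentially the same route as the paper: both arguments use the additivity of $\mathcal F$ to rewrite $(\mathcal H\otimes_{\mathcal O}\mathcal F)(c\sqcup c)$ as $\bigoplus_{l}\mathcal H(c\sqcup c)\!\!\downarrow_{i_l}\otimes_{\mathcal O(c)}\mathcal F(c)$, use the fold map to split $\mathcal H(i_l)$ and conclude monoadditivity, and identify $\Sigma(\mathcal H\otimes_{\mathcal O}\mathcal F)$ with $(\sigma_1\mathcal H\oplus\sigma_2\mathcal H)\otimes_{\mathcal O}\mathcal F$ so that induction kills all higher limits. The naturality bookkeeping you flag as the "hard part" is exactly what the paper leaves implicit, so there is no gap relative to the published argument.
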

\begin{proof}
We put ${\sf t}_{l,c}:=\mathcal H(i_l):\mathcal H(c)\to \mathcal H(c\sqcup c)\!\!\downarrow_{i_l}$ and $\sigma_l \mathcal H:={\rm coker}({\sf t}_l)$ for $l\in \{1,2\}.$ Since $({\sf id}_c,{\sf id}_c) \circ i_l={\sf id}_c,$ the morphism ${\sf t}_{l,c}$ is a $\mathcal O(c)$-split monomorphism, and hence $\mathcal H(c\sqcup c)\downarrow_{i_l}\cong \mathcal H(c)\oplus \sigma_l\mathcal H(c).$ Consider a commutative diagram
$$\xymatrix{
\bigoplus\limits_{l\in \{1,2\}} \mathcal H(c\sqcup c)\!\downarrow_{i_l} \otimes_{\mathcal O(c)} \mathcal F(c)  \ar[rr]_{\cong}^{{\sf id}\otimes {\sf T}_{\mathcal F}} && \mathcal H(c\sqcup c)\otimes_{\mathcal O(c\sqcup c)} \mathcal F(c\sqcup c)   \\
\bigoplus\limits_{l\in \{1,2\}} \mathcal H(c)\otimes_{\mathcal
O(c)} \mathcal F(c)\ar@(r,d)[rru]_{{\sf T}_{\mathcal
H\otimes_{\mathcal O} \mathcal F},c}\ar[u]^{ ({\sf t}_1\otimes
{\sf id}_{\mathcal F(c)})\oplus ({\sf t}_2\otimes {\sf
id}_{\mathcal F}(c)) } }$$ Since ${\sf t}_l$ is a $\mathcal
O(c)$-split monomorphism, the left-hand morphism is a split
monomorphism, and hence $\mathcal H\otimes_{\mathcal O}\mathcal F$
is monoadditive. Moreover, we obtain $\Sigma (\mathcal
H\otimes_{\mathcal O}\mathcal F) = \bigoplus_{l\in \{1,2\}}
\sigma_l\mathcal H\otimes_{\mathcal O} \mathcal F=(\sigma_1
\mathcal H\oplus \sigma_2\mathcal H)\otimes_{\mathcal O} \mathcal
F.$ Using the fact that  $\Sigma(\mathcal H\otimes_{\mathcal O}
\mathcal F)$ has the same form as $\mathcal H\otimes_{\mathcal
O}\mathcal F$ and  induction, we get that $\Sigma^n(\mathcal
H\otimes_{\mathcal O}\mathcal F)$ is monoadditive for all $n,$ and
thus, $\ilimit^n \ (\mathcal H\otimes_{\mathcal O}\mathcal F)=0.$

\end{proof}

\section{4-term exact sequences}\label{section_4-term_exact_sequences}
A natural way to get different presentations of homology groups as
derived limits is to use certain homological 4-term exact
sequences of the form
$$
\text{Homology}(\text{object})\hookrightarrow
\text{Functor}_1(\text{presentation})\to
\text{Functor}_2(\text{presentation})\twoheadrightarrow
\text{Homology}(\text{object})
$$
Generally such sequences appear from homological spectral
sequences. Here we give the examples of 4-term sequences and
related computations of $\ilimit, \ilimit^1$-functors in two
cases: 1) group homology, 2) cyclic homology of algebras.
\subsection{Relation modules and group homology}  For a group $G$, consider a
free presentation of $G$:
\begin{equation}\label{exacts1}
1\to R\to F\buildrel{\pi}\over\to G\to 1, \end{equation} where $F$
is a free group with a generating set $S$. The augmentation ideal
$\Delta(F)=\ker \{\mathbb Z[F]\to \mathbb Z\}$ of $F$ is the free
${\mathbb Z}[F]$-module on the set $\{ s-1 : s \in S \}$. There is
an exact sequence of ${\mathbb Z}G$-modules
\begin{equation}\label{magnus}
 0 \longrightarrow R_{ab}
   \stackrel{\mu}{\longrightarrow}
   {\mathbb Z}G \otimes _{{\mathbb Z}[F]} \Delta(F)
   \stackrel{\sigma}{\longrightarrow}
   \Delta(G)\longrightarrow 0 ,
\end{equation}
where $\mu$ maps $r[R,R]$ onto $1 \otimes (r-1)$ for all $r \in
R$, $\sigma$ maps the basis element $1 \otimes (s-1)$ onto
$\pi(s)-1$ for all $s \in S$ and $\Delta(G)$ is the augmentation
ideal in the group ring $\mathbb Z[G]$. The map $\mu$ is called
the {\it Magnus embedding} of the relation module $R_{ab}$.

Let $M$ be a $\mathbb Z[G]$-module. The module $M$ can be viewed
as a module over $R$ and $F$ where the action of $R$ is trivial
and the action of $F$ is induced by the action of $G$. Consider
the homology spectral sequence for the extension (\ref{exacts1})
with the coefficients $M$:
$$
E_{pq}^2=H_p(G, H_q(R, M))\Rightarrow H_{p+q}(F,M).
$$
Since all homology groups of $R$ and $F$ are zero in dimensions
$\geq 2$, this spectral sequence gives the natural isomorphisms:
\begin{equation}\label{isod2}
E_{p,0}^2=H_p(G,M)\buildrel{d_{p0}^2}\over\longrightarrow
E_{p-2,1}^2=H_{p-2}(G,R_{ab}\otimes M),\ p\geq 3
\end{equation}
and the following 4-term exact sequence:
\begin{equation}\label{4termhom}
0\to H_2(G, M)\to H_0(G, R_{ab}\otimes M)\to H_1(F,M)\to
H_1(G,M)\to 0.
\end{equation}
For $n\geq 1$, take the module $R_{ab}^{\otimes n}\otimes M$ as
coefficients in homology. The sequence (\ref{4termhom}) and
isomorphism (\ref{isod2}) induce the following 4-term exact
sequence:
\begin{equation}\label{4sq}
0\to H_{2n}(G,M)\to H_0(G,R_{ab}^{\otimes n}\otimes M)\to
H_1(F,R_{ab}^{\otimes n-1}\otimes M)\to H_{2n-1}(G,M)\to 0.
\end{equation}
Observe that, for $n\geq 2$,
\begin{equation}\label{triviality}
\ilimit\ H_1(F, R_{ab}^{\otimes n-1}\otimes M)=\ilimit^1 H_1(F,
R_{ab}^{\otimes n-1}\otimes M)=0.
\end{equation}
For the proof see the proof of theorem \ref{fibretheorem}. The
exact sequence (\ref{4sq}) and triviality of limits
(\ref{triviality}) implies that, for $n\geq 2$,
\begin{align*}
& \ilimit\ H_0(G, R_{ab}^{\otimes n}\otimes M)=H_{2n}(G,M)\\
& \ilimit^1 H_0(G, R_{ab}^{\otimes n}\otimes M)=H_{2n-1}(G,M).
\end{align*}
In the next section we will show how to extend this result to the
higher limits.

Now we will consider the similar 4-term sequence, where the
homology group is presented not as a kernel, but as a cokernel.
\begin{Proposition}
The sequence (\ref{magnus}) induces the following 4-term exact
sequence
$$
0\to H_1(F, R_{ab}\otimes M)\to H_1(F,{\mathbb Z}[G] \otimes
_{{\mathbb Z}[F]} \Delta(F)\otimes M)\to H_1(F,\Delta(G)\otimes
M)\to H_2(G,M)\to 0
$$
\end{Proposition}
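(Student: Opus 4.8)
The plan is to tensor the Magnus sequence \eqref{magnus} with $M$ over $\mathbb Z$ and then feed the result into the long exact homology sequence of the free group $F$. Write $P:=\mathbb Z[G]\otimes_{\mathbb Z[F]}\Delta(F)$ for the middle term of \eqref{magnus}; note that $P$ is a free $\mathbb Z[G]$-module. Since $\Delta(G)$ is a subgroup of the free abelian group $\mathbb Z[G]$, it is $\mathbb Z$-free, so $\mathrm{Tor}_1^{\mathbb Z}(\Delta(G),M)=0$ and tensoring \eqref{magnus} with $M$ over $\mathbb Z$ (with the diagonal $G$-action) preserves exactness:
$$0\to R_{ab}\otimes M\to P\otimes M\to \Delta(G)\otimes M\to 0.$$
Regarding these as $F$-modules through $\pi$ and applying $H_*(F,-)$, the freeness of $F$ forces $H_i(F,-)=0$ for $i\geq 2$, so the long exact sequence collapses to the six-term sequence
$$0\to H_1(F,R_{ab}\otimes M)\to H_1(F,P\otimes M)\xrightarrow{\sigma_*} H_1(F,\Delta(G)\otimes M)\xrightarrow{\delta} H_0(F,R_{ab}\otimes M)\xrightarrow{\mu_*} H_0(F,P\otimes M)\to\cdots.$$
Its first three terms are exactly those of the asserted sequence, so it remains to identify $\mathrm{coker}(\sigma_*)\cong\mathrm{im}(\delta)=\ker(\mu_*)$ with $H_2(G,M)$.

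For this I would compute the two $H_0$-terms. Since $F$ acts through $\pi$, for any $G$-module $N$ one has $H_0(F,N)=N_F=N_G$, and for the diagonal action there is a natural isomorphism $(N\otimes M)_G\cong N\otimes_{\mathbb Z[G]}M$ (viewing $N$ as a right module via $n\cdot g=g^{-1}n$). Naturality in $N$ identifies the connecting-map image $\ker(\mu_*)$ with the kernel of $\mu\otimes_{\mathbb Z[G]}\mathrm{id}_M$, that is,
$$\ker(\mu_*)\cong\ker\big(R_{ab}\otimes_{\mathbb Z[G]}M\xrightarrow{\mu\otimes 1}P\otimes_{\mathbb Z[G]}M\big).$$

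Finally I would perform two dimension shifts over $\mathbb Z[G]$. Because $P$ is $\mathbb Z[G]$-free, the $\mathrm{Tor}$-sequence of \eqref{magnus} gives $\ker\!\big(R_{ab}\otimes_{\mathbb Z[G]}M\to P\otimes_{\mathbb Z[G]}M\big)\cong\mathrm{Tor}_1^{\mathbb Z[G]}(\Delta(G),M)$; and the augmentation sequence $0\to\Delta(G)\to\mathbb Z[G]\to\mathbb Z\to 0$, with $\mathbb Z[G]$ free, yields $\mathrm{Tor}_1^{\mathbb Z[G]}(\Delta(G),M)\cong\mathrm{Tor}_2^{\mathbb Z[G]}(\mathbb Z,M)=H_2(G,M)$. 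Substituting $\mathrm{coker}(\sigma_*)\cong H_2(G,M)$ into the truncated long exact sequence produces the claimed 4-term exact sequence. The step requiring the most care — and the main obstacle — is the middle one: one must verify that taking $F$-coinvariants of the diagonal modules genuinely produces $-\otimes_{\mathbb Z[G]}M$ (i.e.\ $\mathrm{Tor}_0^{\mathbb Z[G]}(-,M)$), and that the connecting homomorphism $\delta$ is natural enough that $\ker(\mu_*)$ may be read off as the $\mathrm{Tor}_1$ above. Once the coefficients have been correctly transported from the diagonal $F$-action to $\mathbb Z[G]$-tensor products, the two dimension shifts are routine.
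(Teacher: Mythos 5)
Your proof is correct, and its skeleton is the same as the paper's: tensor \eqref{magnus} with $M$ over $\mathbb Z$ (exact since $\Delta(G)$ is $\mathbb Z$-free), apply $H_*(F,-)$, and use that the free group $F$ has homological dimension one to truncate the long exact sequence after $H_1$. The two arguments differ only in how the last term ${\rm coker}(\sigma_*)={\rm im}(\delta)$ is identified with $H_2(G,M)$. The paper compares the boundary map $\delta:H_1(F,\Delta(G)\otimes M)\to H_0(F,R_{ab}\otimes M)$ with the corresponding boundary map in $G$-homology, using that $H_1(F,\Delta(G)\otimes M)\to H_1(G,\Delta(G)\otimes M)$ is onto, that $H_0(F,-)=H_0(G,-)$ on $G$-modules, and the single dimension shift $H_1(G,\Delta(G)\otimes M)\cong H_2(G,M)$; implicitly this also uses that the $G$-level boundary map is injective because $P\otimes M$ is an induced module. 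You instead stay entirely at the $H_0$ level: rewriting $H_0(F,N\otimes M)$ as $N\otimes_{\mathbb Z[G]}M$ turns $\ker(\mu_*)$ into ${\rm Tor}_1^{\mathbb Z[G]}(\Delta(G),M)$, and a second shift along the augmentation sequence gives ${\rm Tor}_2^{\mathbb Z[G]}(\mathbb Z,M)=H_2(G,M)$. Both routes are standard dimension-shifting; yours avoids invoking the surjectivity of the comparison map on $H_1$ (which the paper gets from the five-term exact sequence) at the cost of one extra ${\rm Tor}$ shift, and the naturality point you flag — that the identification $(N\otimes M)_G\cong N\otimes_{\mathbb Z[G]}M$ is functorial in $N$, so that $\mu_*$ really becomes $\mu\otimes_{\mathbb Z[G]}{\rm id}_M$ — is exactly the right thing to check and does hold.
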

\begin{proof}
Applying the functor $H_*(F,-)$ to the Magnus embedding, we get
three left terms of the needed sequence. It remains to show that
the image of the boundary map
$$
H_1(F, \Delta(G)\otimes M)\to H_0(F, R_{ab}\otimes M)
$$
is exactly $H_2(G,M)$. This follows from the natural commutative
diagram
$$
\xyma{H_1(F,\Delta(G)\otimes M)\ar@{->>}[d] \ar@{->}[r] &
H_0(F,R_{ab}\otimes M)\ar@{=}[d] \\ H_1(G,\Delta(G)\otimes
M)\ar@{->}[r] & H_0(G,R_{ab}\otimes M)}
$$
and the isomorphism $H_1(G,\Delta(G)\otimes M)=H_2(G,M)$.
\end{proof}
\begin{Corollary}
For $n\geq 1$, there is the following exact 4-term sequence
\begin{multline}\label{4multline} 0\to H_1(F, R_{ab}^{\otimes n}\otimes M)\to
H_1(F,R_{ab}^{\otimes n-1}\otimes {\mathbb Z}[G] \otimes
_{{\mathbb Z}[F]}\Delta(F)\otimes M)\to\\ H_1(F,R_{ab}^{\otimes
n-1}\otimes \Delta(G)\otimes M)\to H_{2n}(G,M)\to 0
\end{multline}
\end{Corollary}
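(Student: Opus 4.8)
The plan is to obtain $(\ref{4multline})$ from the preceding proposition by a change of coefficient module, followed by an iterated use of the isomorphism $(\ref{isod2})$. Since $R_{ab}$ and $M$ are $\mathbb{Z}[G]$-modules, so is $R_{ab}^{\otimes n-1}\otimes M$ (all tensor products being taken over $\mathbb Z$ with the diagonal $G$-action), and hence it is a legitimate coefficient module for the proposition.

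First I would apply the proposition with $M$ replaced by $R_{ab}^{\otimes n-1}\otimes M$, which yields the exact sequence
\begin{multline*}
0\to H_1(F, R_{ab}\otimes R_{ab}^{\otimes n-1}\otimes M)\to H_1(F,{\mathbb Z}[G] \otimes _{{\mathbb Z}[F]} \Delta(F)\otimes R_{ab}^{\otimes n-1}\otimes M)\to\\ H_1(F,\Delta(G)\otimes R_{ab}^{\otimes n-1}\otimes M)\to H_2(G,R_{ab}^{\otimes n-1}\otimes M)\to 0.
\end{multline*}
The symmetry of the tensor product over $\mathbb Z$ furnishes canonical isomorphisms of $\mathbb{Z}[G]$-modules that move the factor $R_{ab}^{\otimes n-1}$ to the front; under these the first three terms become precisely the first three terms of $(\ref{4multline})$, the leftmost one simplifying to $H_1(F, R_{ab}^{\otimes n}\otimes M)$.

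It then remains to identify the rightmost term $H_2(G, R_{ab}^{\otimes n-1}\otimes M)$ with $H_{2n}(G,M)$. For this I would invoke $(\ref{isod2})$, which reads $H_p(G, N)\cong H_{p-2}(G, R_{ab}\otimes N)$ for every $p\geq 3$ and every coefficient module $N$, applying it successively with $N = M, R_{ab}\otimes M,\dots, R_{ab}^{\otimes n-2}\otimes M$. This produces a chain of natural isomorphisms
$$H_{2n}(G,M)\cong H_{2n-2}(G, R_{ab}\otimes M)\cong\cdots\cong H_2(G, R_{ab}^{\otimes n-1}\otimes M),$$
in which $(\ref{isod2})$ is used at the indices $p=2n, 2n-2,\dots,4$, all of which satisfy $p\geq 3$; for $n=1$ the corollary reduces to the proposition itself.

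The hard part, such as it is, lies not in any single isomorphism but in checking that these identifications are compatible with the maps of the sequence, i.e. that the boundary map $H_1(F,\Delta(G)\otimes R_{ab}^{\otimes n-1}\otimes M)\to H_2(G, R_{ab}^{\otimes n-1}\otimes M)$ corresponds, after the identification, to the last arrow of $(\ref{4multline})$. Since $(\ref{isod2})$ and the tensor-symmetry isomorphisms are all natural in the coefficient module, the coefficient substitution and the iterated identification are natural transformations between 4-term exact sequences; exactness is thereby preserved and the resulting sequence is exactly $(\ref{4multline})$.
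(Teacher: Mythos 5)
Your proof is correct and is exactly the derivation the paper intends: the Corollary is stated without proof, but it follows from the preceding Proposition by substituting the coefficient module $R_{ab}^{\otimes n-1}\otimes M$ and identifying $H_2(G,R_{ab}^{\otimes n-1}\otimes M)$ with $H_{2n}(G,M)$ via the iterated isomorphism (\ref{isod2}), which is precisely how the paper obtains the analogous sequence (\ref{4sq}) from (\ref{4termhom}). Your attention to the commutativity of the tensor factors and to naturality of the identifications is a welcome extra check, not a departure from the paper's route.
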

\subsection{Quillen sequences}
Let $k$ be a field of characteristic zero, $E$ an associative
$k$-algebra, $I$ a two-sided ideal of $E$ and $M$ an $E$-bimodule.
Let $\mathcal B(E)$ be the free bimodule resolution of  $E$:
$\mathcal B(E)_n=E^{\otimes n+2},$ $(n+2)$-fold tensor product
over $k$, $n\geq 0$, with differential $b'$ given by
$$b'(x_0\otimes\ldots \otimes
x_{n+1})=\sum_{j=0}^n(-1)^j(x_0\otimes \ldots \otimes x_jx_{j+1}
\otimes \ldots \otimes x_{n+1}), x_i\in E.$$ Following \cite{Q},
denote by $M\otimes_E$ the Hochschild homology group $H_0(E,M)$;
set
$$ M\otimes_E^{!}=M\otimes_E\mathcal B(E)\otimes_E,\
(M\otimes_E^{!})^n=M\otimes_E\mathcal B(E)\otimes_E\dots
M\otimes_E\mathcal B(E) \otimes_E\ (n\ \text{times})$$ and denote
by $(M\otimes_E^{!})_{\mathbb Z/n}^n$ its quotient under the the
action the cyclic group $\mathbb Z/n$.

Recall from \cite{Q} (Theorem 5.5) that there exists the following
spectral sequence
$$E_{pq}^1=H_{q-p}((E\otimes^{!}_E)^{p+1}_{\mathbb Z/{p+1}}/
(I\otimes^{!}_E)^{p+1}_{\mathbb Z/{p+1}})\Rightarrow
HC_{p+q}(E/I).$$ In the case $E$ is free, the spectral sequence
collapses to the exact sequence (for $n\geq 1$)
$$
\xyma{HC_{2n}(E/I)\ar@{>->}[r] & E_{nn}^1 \ar@{->}[r]^{d_{nn}^1}
\ar@{=}[d] & E_{n-1,n}^1\ar@{->>}[r] \ar@{=}[d] & HC_{2n-1}(E/I)\\
& HC_0(E/I^{n+1}) & H_1(E,E/I^n)_\sigma}
$$
Since, for $n\geq 1$, $HC_0(E/I^{n+1})=E/(I^{n+1}+[E,E])$, where
$[E,E]$ is the $k$-submodule of $R$ generated by $rs-sr,\ r,s\in
E$, one has the following 4-term sequence:
\begin{equation}\label{Quillen_sequense}
0\to HC_{2n}(E/I)\to E/(I^{n+1}+[E,E])\to H_1(E,E/I^n)_\sigma\to
HC_{2n-1}(E/I)\to 0
\end{equation}
The similar spectral sequence argument is used in \cite{Q}
(Theorem 5.11) for an analogous 4-term spectral sequence for the
odd dimensional reduced cyclic homology (for $n\geq 0$):
\begin{equation}\label{Quillen_sequense2}
0\to \overline{HC}_{2n+1}(E/I)\to I^{n+1}/[I,I^n]\to H_1(E, I^n)\to \overline{
HC}_{2n}(E/I)\to 0
\end{equation}

Note that, by definition, $ H_1((E/I^n)\otimes^{!}_E)=H_1(E,
E/I^n)$.

\section{Group homology as higher limits}\label{section_Group_homology_as_higher_limits}

\subsection{Group homology}

In this section, we will consider the category ${\sf Pres}(G)$
of free presentations of a group $G$. The objects of ${\sf Pres}(G)$ are surjective homomorphisms $\pi:F\epi G$ where $F$ is
a free group and  morphisms $f:(\pi_1:F_1\epi G)\to (\pi_2:F_2\epi
G)$ are homomorphisms $f:F_1\to F_2$ such that $\pi_1=\pi_2f$. The
category ${\sf Pres}(G)$ has coproducts given by
$$(\pi_1:F_1\epi G)\sqcup(\pi_2:F_2\epi G)=(\pi_1+\pi_2:F_1*F_2\epi
G),$$ and hence it is contractible. In particular, for any abelian
group $A$ higher limits $\ilimit^i A$ vanish for $i>0.$
 We will always denote by $R$ the kernel of an epimorphism $\pi:F\epi G.$ Therefore an object of ${\sf Pres}(G)$ defines a short exact sequence of groups
$$1\longrightarrow R \longrightarrow F \overset{\pi}{\longrightarrow} G \longrightarrow 1,$$
and $R$ can be considered as a functor $R:{\sf Pres}(G)\to {\sf Gr}.$
All the limits considered bellow in this section are taken over the
category ${\sf Pres}(G)$.

\begin{Theorem}\label{groupisomorphism}
For a group $G$ and a $\mathbb Z[G]$-module $M$, for $n\geq 1$,
there are natural isomorphisms
\begin{equation}\label{mainformula}
\ilimit^i\:  H_0(G,R_{ab}^{\otimes n}\otimes M)=\begin{cases}
H_{2n-i}(G,M)\ \text{ for } \ i<n,\\ 0\ \text{for}\ i>n
\end{cases}
\end{equation}
For $i=n$, there is a natural short exact sequence
$$
0\to H_n(G,M)\to \ilimit^nH_0(G,R_{ab}^{\otimes n}\otimes M)\to
(\Delta(G)^{\otimes n-1}\otimes M)\Delta(G)\to 0
$$
\end{Theorem}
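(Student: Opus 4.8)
The plan is to build everything from the 4-term exact sequence (\ref{4sq}) together with the triviality result (\ref{triviality}), iterating via the dimension-shifting Magnus sequence (\ref{magnus}). The first step is to establish the stated isomorphisms for $i<n$ by induction on $n$. For the base case and the inductive passage I would apply the functor $H_0(G,-)$ to the tensor product $R_{ab}^{\otimes n}\otimes M$ and exploit the long exact sequence of higher limits attached to the short exact sequence obtained from (\ref{magnus}) by tensoring with $R_{ab}^{\otimes n-1}\otimes M$ over $\mathbb{Z}[G]$, namely
\begin{equation}
0\to R_{ab}^{\otimes n}\otimes M\to R_{ab}^{\otimes n-1}\otimes (\mathbb{Z}[G]\otimes_{\mathbb{Z}[F]}\Delta(F))\otimes M\to R_{ab}^{\otimes n-1}\otimes\Delta(G)\otimes M\to 0.
\end{equation}
The crucial input is that the middle term, after applying $H_0(G,-)$, becomes an \emph{additive} functor in the sense of the previous section: because $\mathbb{Z}[G]\otimes_{\mathbb{Z}[F]}\Delta(F)$ is (up to the free-module structure recorded after (\ref{magnus})) induced from the free $\mathbb{Z}[F]$-module on the generating set $S$, its limits and higher limits all vanish by proposition \ref{additive_limit_vanish}. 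This forces the connecting maps in the long exact sequence to be isomorphisms, shifting the limit index exactly as needed.

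The second step is to pin down the homological degree. Applying $H_0(G,-)=(-)_G$ to the Magnus sequence identifies $\ilimit^i$ of the third term with $\ilimit^i$ of $H_0(G,R_{ab}^{\otimes n-1}\otimes\Delta(G)\otimes M)$; combined with the collapse of the middle term this yields the recurrence $\ilimit^i H_0(G,R_{ab}^{\otimes n}\otimes M)\cong \ilimit^{i-1}H_0(G,R_{ab}^{\otimes n-1}\otimes\Delta(G)\otimes M)$, which after $n$ steps relates the original higher limit to a higher limit of degree $i-(n-1)$ of $H_0(G,\Delta(G)^{\otimes n-1}\otimes\cdots)$-type functors whose $\ilimit$ and $\ilimit^1$ are computed directly from (\ref{4sq}) and (\ref{triviality}). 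Matching this against the spectral-sequence identifications $\ilimit\, H_0(G,R_{ab}^{\otimes n}\otimes M)=H_{2n}(G,M)$ and $\ilimit^1 H_0(G,R_{ab}^{\otimes n}\otimes M)=H_{2n-1}(G,M)$ already recorded before the theorem gives the general formula $H_{2n-i}(G,M)$ for $i<n$. The vanishing for $i>n$ follows from the same recurrence once the index is shifted past the point where the coefficient functor becomes additive (so that corollary \ref{corollary_monoadditive} or proposition \ref{additive_limit_vanish} applies) and from the fact that $H_j(G,-)=0$ would-be-negative degrees contribute nothing.

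The third step, the boundary case $i=n$, is where the clean isomorphism degenerates into a short exact sequence. Here the recurrence terminates at the functor $H_0(G,\Delta(G)^{\otimes n-1}\otimes M)=(\Delta(G)^{\otimes n-1}\otimes M)_G=(\Delta(G)^{\otimes n-1}\otimes M)\Delta(G)$, whose $\ilimit^0$ no longer vanishes but instead survives as the cokernel term. I would extract the short exact sequence
\begin{equation}
0\to H_n(G,M)\to \ilimit^n H_0(G,R_{ab}^{\otimes n}\otimes M)\to (\Delta(G)^{\otimes n-1}\otimes M)\Delta(G)\to 0
\end{equation}
from the last stage of the long exact limit sequence, where the edge map out of the constant-functor contribution $(\Delta(G)^{\otimes n-1}\otimes M)\Delta(G)$ is the one that must be split off, and where $H_n(G,M)$ appears as the image of the boundary just as $H_{2n}(G,M)$ did in the generic case. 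The main obstacle I anticipate is exactly the careful bookkeeping at this boundary: verifying that the connecting homomorphism degenerates precisely at step $i=n$ (and not earlier or later), and checking that the surviving cokernel is naturally $(\Delta(G)^{\otimes n-1}\otimes M)_G$ rather than some quotient of it — this requires tracking the additivity/monoadditivity status of the intermediate functors across all $n$ inductive steps and confirming that naturality in $M$ is preserved throughout the dimension shift.
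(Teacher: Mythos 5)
Your overall strategy --- peel off one copy of $R_{ab}$ at a time using the Magnus sequence (\ref{magnus}) tensored with $R_{ab}^{\otimes n-1}\otimes M$, kill the middle term by additivity (proposition \ref{additive_limit_vanish}), and convert the resulting shift of the limit index into the dimension shift $H_j(G,\Delta(G)\otimes N)\cong H_{j+1}(G,N)$ --- is essentially the engine that drives the paper's proof as well. But there is a genuine gap at your central step: the functor $H_0(G,-)$ is only right exact, so applying it to your short exact sequence of $\mathbb Z[G]$-modules does \emph{not} produce a short exact sequence of representations, and the ``long exact sequence of higher limits attached to the short exact sequence'' you invoke is not available. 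What you actually get is a 4-term sequence whose extra kernel term is $H_1(G,R_{ab}^{\otimes n-1}\otimes\Delta(G)\otimes M)$; your recurrence $\ilimit^i H_0(G,R_{ab}^{\otimes n}\otimes M)\cong\ilimit^{i-1}H_0(G,R_{ab}^{\otimes n-1}\otimes\Delta(G)\otimes M)$ only holds (and then only for $i\geq 2$) after one identifies this kernel as the \emph{constant} functor $H_{2n}(G,M)$ via the isomorphisms (\ref{isod2}), so that it contributes nothing to $\ilimit^i$ for $i\geq 1$ over the contractible category ${\sf Pres}(G)$. That identification is precisely what the homological 4-term sequences (\ref{4termhom}), (\ref{4sq}) and (\ref{4multline}) are set up to encode, and omitting it leaves the recurrence unproved. (If instead you meant to take higher limits of the short exact sequence \emph{before} applying $H_0(G,-)$, the resulting long exact sequence computes $\ilimit^i(R_{ab}^{\otimes n}\otimes M)$ rather than $\ilimit^i H_0(G,R_{ab}^{\otimes n}\otimes M)$, which is not what the theorem asserts.)

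For comparison, the paper's proof proceeds by induction on $n$: the base case $n=1$ uses the monoadditivity of $H_1(F,M)$ together with $\Sigma H_1(F,M)=M\Delta(G)$ and proposition \ref{proposition_cokernel_limits} to compute $\ilimit^i H_1(F,M)$, then feeds the two-term complex $H_0(G,R_{ab}\otimes M)\to H_1(F,M)$ coming from (\ref{4termhom}) into the spectral sequence of corollary \ref{corollary_spectral}; the inductive step uses (\ref{4sq}) to trade $\ilimit^i H_0(G,R_{ab}^{\otimes n+1}\otimes M)$ for $\ilimit^i H_1(F,R_{ab}^{\otimes n}\otimes M)$ when $i\geq 2$, and then (\ref{4multline}) to lower the index by one at the cost of replacing $M$ by $\Delta(G)\otimes M$. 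Your route avoids $H_1(F,-)$ altogether, which is attractive, but to make it rigorous you must either prove the 4-term version of your sequence with the constant kernel term, or fall back on the sequences the paper already provides. Your treatment of the boundary case $i=n$ and of the vanishing for $i>n$ rests on the same unproved recurrence and so inherits the gap; once the recurrence is repaired, your identification of the cokernel term as $(\Delta(G)^{\otimes n-1}\otimes M)\Delta(G)$ and of the kernel as $H_n(G,M)$ does come out correctly.
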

\begin{Corollary}\label{homology_corollary}
For any group $G$ and $n\geq 1$,
there are natural isomorphisms
$$
H_{2n-i}(G)=\ilimit^i\:  (R_{ab}^{\otimes n})_G \ \text{ for } \
i<n.
$$
\end{Corollary}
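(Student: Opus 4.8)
The plan is to derive this directly from Theorem~\ref{groupisomorphism} by specialising the coefficient module to the trivial module $M=\mathbb Z$. First I would recall that group homology with trivial integer coefficients is by definition $H_{2n-i}(G)=H_{2n-i}(G,\mathbb Z)$, so the left-hand side of the desired formula is exactly the value appearing in the theorem once we set $M=\mathbb Z$.

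Next I would identify the representation whose higher limits are being computed. With $M=\mathbb Z$ carrying the trivial $G$-action there is a canonical isomorphism of $\mathbb Z[G]$-modules $R_{ab}^{\otimes n}\otimes_{\mathbb Z}\mathbb Z\cong R_{ab}^{\otimes n}$: tensoring over $\mathbb Z$ with the trivial module returns the original module, and the diagonal $G$-action collapses to the given action on $R_{ab}^{\otimes n}$ because the action on the second tensor factor is trivial. This isomorphism is compatible with morphisms of presentations, hence it is an isomorphism of representations of ${\sf Pres}(G)$. Applying the coinvariants functor $H_0(G,-)=(-)_G$ then yields a natural isomorphism $H_0(G,R_{ab}^{\otimes n}\otimes M)\cong (R_{ab}^{\otimes n})_G$ of functors on ${\sf Pres}(G)$.

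Finally I would feed these two identifications into the isomorphism $\ilimit^i\,H_0(G,R_{ab}^{\otimes n}\otimes M)\cong H_{2n-i}(G,M)$ supplied by Theorem~\ref{groupisomorphism} in the range $i<n$. Since higher limits are functorial, the natural isomorphism of representations above induces a natural isomorphism $\ilimit^i\,(R_{ab}^{\otimes n})_G\cong \ilimit^i\,H_0(G,R_{ab}^{\otimes n}\otimes M)$, and composing with the theorem gives $\ilimit^i\,(R_{ab}^{\otimes n})_G\cong H_{2n-i}(G)$ for $i<n$, as claimed. There is essentially no obstacle here: the whole content is carried by the theorem, and the only point requiring any care is the coefficient identification $R_{ab}^{\otimes n}\otimes\mathbb Z\cong R_{ab}^{\otimes n}$ and its naturality in the presentation, which is routine.
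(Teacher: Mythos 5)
Your proposal is correct and is exactly the intended argument: the paper states this corollary without separate proof because it is the specialisation of Theorem \ref{groupisomorphism} to the trivial module $M=\mathbb Z$, using precisely the identifications $R_{ab}^{\otimes n}\otimes\mathbb Z\cong R_{ab}^{\otimes n}$ and $H_0(G,-)=(-)_G$ that you spell out. Nothing further is needed.
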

\begin{proof}
First lets prove the theorem for $n=1$. Observe that the
representation $H_1(F,M)$ is monoadditive and
$$
\Sigma H_1(F,M)=M\Delta(G).
$$
Hence, by proposition \ref{proposition_cokernel_limits},
$$
\ilimit^i H_1(F,M)=\begin{cases} M\Delta(G)\ \text{for}\ i=1\\
0,\ i\neq 1\end{cases}
$$
The needed isomorphisms follow now from the sequence
(\ref{4termhom}) and corollary \ref{corollary_spectral} applied to
the complex
$$
H_0(G, R_{ab}\otimes M)\to H_1(F,M)
$$
whose zeroth homology is $H_1(G,M)$ and the first homology is
$H_2(G,M)$. For $i=n=1,$ the spectral sequence
\ref{corollary_spectral} implies that there is following natural
short exact sequence
$$
0\to H_1(G,M)\to \ilimit^1H_0(G, R_{ab}\otimes M)\to M\Delta(G)\to
0.
$$
In particular, for a trivial $\mathbb Z[G]$-module $M$, there is
an isomorphism
$$
H_1(G,M)=\ilimit^1H_0(G,R_{ab}\otimes M).
$$

Now assume that the isomorphisms (\ref{mainformula}) are proved
for a given $n$. Consider the next step. Since the category
${\sf Pres}(G)$ is acyclic, the 4-term sequence (\ref{4sq})
implies that there is a natural isomorphism
$$
\ilimit^i H_0(G,R_{ab}^{\otimes n+1}\otimes M)= \ilimit^i
H_1(F,R_{ab}^{\otimes n}\otimes M),\ i\geq 2.
$$
For any $\mathbb Z[G]$-module $N$ (which may depend on $F$ and
$R$),
$$
\ilimit^i H_1(F, {\mathbb Z}[G] \otimes _{{\mathbb
Z}[F]}\Delta(F)\otimes N)=0,\ i\geq 0
$$
Now the sequence (\ref{4multline}) implies that
$$
\ilimit^i H_1(F, R_{ab}^{\otimes n}\otimes M)= \ilimit^{i-1}
W_n(F,R),\ n\geq 1
$$
for all $i\geq 0$, where $W_n(F,R)$ is the part of the following
short exact sequence
$$
0\to W_n(F,R)\to H_1(F, R_{ab}^{\otimes n-1}\otimes
\Delta(G)\otimes M)\to H_{2n}(G,M)\to 0.
$$
First consider the case $i=2$. For $n\geq 2$,
$$
\ilimit^i H_1(F,R_{ab}^{\otimes n-1}\otimes \Delta(G)\otimes
M)=0,\ i=0,1.
$$
Therefore, there is a natural isomorphism
$$
H_{2n}(G,M)=\ilimit^1\ W_n(F,R)=\ilimit^2 H_1(F,R_{ab}^{\otimes
n}\otimes M)=\ilimit^2\ H_0(F,R_{ab}^{\otimes n+1}\otimes M)
$$
and (\ref{mainformula}) follows for $i=2$. In the case $n=i=2$, we
get the natural exact sequence
$$
0\to H_2(G,M)\to \ilimit^2 H_0(F,R_{ab}^{\otimes 2}\otimes M)\to
(\Delta(G)\otimes M)\Delta(G)\to 0,
$$
where the last term comes from the isomorphism $$\ilimit^1
H_1(F,\Delta(G)\otimes M)=(\Delta(G)\otimes M)\Delta(G).$$

For $i\geq 3$, there is a natural isomorphism
$$
\ilimit^i H_1(F, R_{ab}^{\otimes n}\otimes M)\simeq \ilimit^{i-1}
H_1(F,R_{ab}^{\otimes n-1}\otimes \Delta(G)\otimes M).
$$
Since $$\ilimit^{i-1}H_1(F,R_{ab}^{\otimes n-1}\otimes
\Delta(G)\otimes M)= \ilimit^{i-1}H_0(G, R_{ab}^{\otimes n}\otimes
\Delta(G)\otimes M),$$ the inductive assumption implies that
$$
\ilimit^i H_1(F,R_{ab}^{\otimes n}\otimes M)=\begin{cases}
H_{2n-i+1}(G,\Delta(G)\otimes M)= H_{2n-i+2}(G,M)\ \text{for}\ i<n+1\\
0\ \text{for}\ i>n+1
\end{cases}
$$
For $i=n+1$, we get the natural exact sequence
$$
0\to H_{n+1}(G,M)\to \ilimit^{n+1}H_0(G,R_{ab}^{\otimes
n+1}\otimes M)\to (\Delta(G)^{\otimes n}\otimes M)\Delta(G)\to 0
$$
and the inductive step is completed for $i\geq 3$.
\end{proof}

\subsection{Fibre products} For $n\geq 2$ and an epimorphism $p:
F\to G$, consider the fibre product
$$
F_n(\pi):=\underbrace{F\times_G\times\dots\times_GF}_n=\{(x_1,\dots,
x_n)\in F^{\times n}\ |\ \pi(x_1)=\dots=\pi(x_n)\}
$$
There is a natural split exact sequence $(n\geq 2)$:
\begin{equation}\label{fibresplit}
1\to R\to F_n(\pi)\to F_{n-1}(\pi)\to 1.
\end{equation}
For any $\mathbb Z[G]$-module $M$, there is a split exact sequence
of homology groups which follows from the spectral sequence
applied to the extension (\ref{fibresplit}):
\begin{equation}\label{hnfp} 0\to H_i(F_{n-1}(\pi),R_{ab}\otimes M)\to
H_{i+1}(F_n(\pi), M)\to H_{i+1}(F_{n-1}(\pi),M)\to 0.
\end{equation}
For $i=n-1$, we have a natural isomorphism
\begin{equation}\label{isohp}
H_n(F_n(\pi),M)\simeq H_1(F, R_{ab}^{\otimes n-1}\otimes M).
\end{equation}
\begin{Theorem}\label{fibretheorem}
For any $\mathbb Z[G]$-module $M$, there are natural isomorphisms
\begin{align}
& \ilimit\ H_{n-1}(F_n(\pi),M)= H_{2n-2}(G,M)\ \text{for}\ n\geq 2\\
& \ilimit^1 H_{n-1}(F_n(\pi),M)= H_{2n-3}(G,M)\ \text{for}\ n\geq
3.
\end{align}
\end{Theorem}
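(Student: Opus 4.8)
The plan is to strip the homology of the fibre tower one degree at a time via the split sequence \eqref{hnfp}, descending from $H_{n-1}(F_n(\pi),M)$ down to the functor $H_0(G,R_{ab}^{\otimes n-1}\otimes M)$ whose higher limits are already known from theorem \ref{groupisomorphism}. The entire descent is controlled by the single vanishing assertion \eqref{triviality}, that $\ilimit\, H_1(F,R_{ab}^{\otimes m}\otimes M)=\ilimit^1 H_1(F,R_{ab}^{\otimes m}\otimes M)=0$ for all $m\geq 1$ and all coefficients. This is the heart of the matter, so I would prove it first.

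For the triviality I would feed the 4-term sequence \eqref{4multline} (with $n=m$) into the long exact sequence of higher limits. Write it as two short exact sequences $0\to H_1(F,R_{ab}^{\otimes m}\otimes M)\to Y\to W\to 0$ and $0\to W\to H_1(F,R_{ab}^{\otimes m-1}\otimes\Delta(G)\otimes M)\to H_{2m}(G,M)\to 0$, where $Y=H_1(F,R_{ab}^{\otimes m-1}\otimes\mathbb Z[G]\otimes_{\mathbb Z[F]}\Delta(F)\otimes M)$. Since $\mathbb Z[G]\otimes_{\mathbb Z[F]}\Delta(F)$ is free over $\mathbb Z[G]$ on the generating set $S$, which adds under free products, $Y$ has vanishing higher limits — this is exactly the additivity vanishing already used in the proof of theorem \ref{groupisomorphism}, a case of proposition \ref{additive_limit_vanish}. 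The first sequence then forces $\ilimit\, H_1(F,R_{ab}^{\otimes m}\otimes M)\hookrightarrow\ilimit\,Y=0$ and $\ilimit^1 H_1(F,R_{ab}^{\otimes m}\otimes M)\cong\ilimit\,W$. From the second sequence $\ilimit\,W=\ker(\ilimit\,H_1(F,R_{ab}^{\otimes m-1}\otimes\Delta(G)\otimes M)\to H_{2m}(G,M))$, and the left-hand limit vanishes: for $m\geq 2$ by the $\ilimit=0$ half just obtained with exponent $m-1$, and for $m=1$ because $H_1(F,\Delta(G)\otimes M)$ is monoadditive with zero limit (corollary to proposition \ref{proposition_cokernel_limits}). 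Hence $\ilimit^1 H_1(F,R_{ab}^{\otimes m}\otimes M)=0$ too. There is no circularity, since the $\ilimit=0$ half is unconditional and is the only input to the $\ilimit^1$ computation.

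With \eqref{triviality} available I would run the main recursion. Instantiate \eqref{hnfp} at $i=n-2$, so the quotient term is $H_{n-1}(F_{n-1}(\pi),M)$, which by \eqref{isohp} is the top homology $H_1(F,R_{ab}^{\otimes n-2}\otimes M)$; the same computation shows that at every stage of the descent the quotient term is this very functor, the exponent staying $n-2$. Its $\ilimit$ and $\ilimit^1$ vanish by \eqref{triviality} when $n\geq 3$, and its $\ilimit$ vanishes by monoadditivity when $n=2$. Feeding \eqref{hnfp} into the long exact sequence of higher limits therefore yields $\ilimit^j H_{n-1}(F_n(\pi),M)\cong\ilimit^j H_{n-2}(F_{n-1}(\pi),R_{ab}\otimes M)$ for $j\in\{0,1\}$ (only $j=0$ when $n=2$). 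Iterating, each step trades $(n-1,M)$ for $(n-2,R_{ab}\otimes M)$ and terminates after $n-1$ steps at $H_0(F,R_{ab}^{\otimes n-1}\otimes M)=H_0(G,R_{ab}^{\otimes n-1}\otimes M)$, so $\ilimit^j H_{n-1}(F_n(\pi),M)\cong\ilimit^j H_0(G,R_{ab}^{\otimes n-1}\otimes M)$.

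Finally I would read off theorem \ref{groupisomorphism} at exponent $n-1$: $\ilimit^j H_0(G,R_{ab}^{\otimes n-1}\otimes M)=H_{2n-2-j}(G,M)$ for $j<n-1$. Setting $j=0$ (permissible for $n\geq 2$) gives $\ilimit\, H_{n-1}(F_n(\pi),M)=H_{2n-2}(G,M)$, and setting $j=1$ (permissible for $n\geq 3$) gives $\ilimit^1 H_{n-1}(F_n(\pi),M)=H_{2n-3}(G,M)$, exactly the two asserted ranges. The main obstacle is the triviality \eqref{triviality}: once that additivity-flavoured vanishing is in place, the rest is the bookkeeping of the long exact sequences attached to \eqref{hnfp} together with theorem \ref{groupisomorphism}.
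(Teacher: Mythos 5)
Your argument is correct, and the main reduction is exactly the paper's: instantiate \eqref{hnfp} at $i=n-2$, identify the quotient with $H_1(F,R_{ab}^{\otimes n-2}\otimes M)$ via \eqref{isohp}, kill its $\ilimit$ and $\ilimit^1$ to descend one rung, iterate down to $H_0(G,R_{ab}^{\otimes n-1}\otimes M)$, and finish with theorem \ref{groupisomorphism} (with the $n=2$ case treated separately via monoadditivity of $H_1(F,M)$, since only $\ilimit$ survives there). Where you genuinely diverge is in the proof of the key vanishing \eqref{triviality}. The paper proves it by passing to the free square $F*F$ and analysing, via a Mayer--Vietoris diagram, the cokernel of $H_1(F,R_{ab}^{\otimes k}\otimes N)^{\oplus 2}\hookrightarrow H_1(F*F,\widetilde R_{ab}^{\otimes k}\otimes N)$: this cokernel is exhibited as an extension of two monoadditive representations, so its $\ilimit$ vanishes and proposition \ref{proposition_cokernel_limits} kills $\ilimit^1$ of the original functor. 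You instead splice the 4-term sequence \eqref{4multline} into the long exact sequence of higher limits, using the vanishing of all higher limits of the middle ``free'' term $H_1(F,R_{ab}^{\otimes m-1}\otimes\mathbb Z[G]\otimes_{\mathbb Z[F]}\Delta(F)\otimes M)$ --- the same additivity fact the paper already invokes in the proof of theorem \ref{groupisomorphism} --- plus a short induction on the exponent, with the base case handled by monoadditivity. Both routes rest on the same two pillars (monoadditivity of $H_1(F,-)$ and additivity of the free term), but yours stays entirely inside the Section \ref{section_4-term_exact_sequences} machinery and avoids the explicit free-square/Mayer--Vietoris computation, at the cost of leaning on \eqref{4multline}; the paper's version is more self-contained at this point and, as a by-product, identifies $\Sigma H_1(F,R_{ab}^{\otimes k}\otimes N)$ explicitly. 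Your remark that there is no circularity is worth having made explicit, since the paper defers the proof of \eqref{triviality} from Section \ref{section_4-term_exact_sequences} to the proof of theorem \ref{fibretheorem} while theorem \ref{fibretheorem} in turn cites theorem \ref{groupisomorphism}; in both your version and the paper's, the proof of \eqref{triviality} itself makes no use of theorem \ref{groupisomorphism}, so the logic is sound.
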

\begin{proof}
We first show that
\begin{equation}\label{simplei}
\ilimit\ H_1(F_2(\pi), M)=H_2(G,M).
\end{equation}
To see this, consider the short exact sequence
$$
0\to H_0(F,R_{ab}\otimes M)\to H_1(F_2(\pi),M)\to H_1(F,M)\to 0
$$
The representation $H_1(F,M)$ is monoadditive, hence $\ilimit\
H_1(F,M)=0$. Now the isomorphism (\ref{simplei}) follows from
theorem \ref{groupisomorphism} for $n=1, i=0$.

Now we can assume that $n\geq 3$. We will show that, for any
module $N$ (which may depend on $F$ and $R$) and $k\geq 1$,
\begin{equation}\label{limlim1}
\ilimit\ H_1(F, R_{ab}^{\otimes k}\otimes N)=\ilimit^1
H_1(F,R_{ab}^{\otimes k}\otimes N)=0.
\end{equation}
For a presentation $F/R=G$, consider its free square:
$$
1\to \widetilde{R}\to F*F\to G\to 1,
$$
where $\widetilde{R}$ is the kernel of the natural projection
$F*F\twoheadrightarrow G,$ which sends both copies of $F$ via $p$.
The representation $H_1(F,R_{ab}^{\otimes k}\otimes N)$ is
monoadditive. The cokernel $Cok(F,R)$ of the natural map
$$
H_1(F, R_{ab}^{\otimes k}\otimes N)^{\oplus 2}\hookrightarrow
H_1(F*F, {\widetilde R}_{ab}^{\otimes n}\otimes N)
$$
can be described with the help of Mayer-Vietoris sequence for
group homology of a free product as follows. There is a natural
diagram
$$
\xyma{H_1(F,R_{ab}^{\otimes n}\otimes N)^{\oplus 2}\ar@{=}[r]
\ar@{>->}[d] & H_1(F,R_{ab}^{\otimes n}\otimes N)^{\oplus
2}\ar@{>->}[d]\\ H_1(F*F, R_{ab}^{\otimes n}\otimes N)\ar@{->>}[d]
\ar@{>->}[r] & H_1(F*F, \widetilde{R}_{ab}^{\otimes n}\otimes N)
\ar@{->}[r] \ar@{->>}[d] & H_1\left(F*F,
\frac{\widetilde{R}_{ab}^{\otimes
n}\otimes N}{R_{ab}^{\otimes n}\otimes N}\right)\\
\Delta(G)(R_{ab}^{\otimes n}\otimes N)\ar@{>->}[r]^{inj} &
Cok(F,R) \ar@{->>}[r] & Coker(inj)\ar@{>->}[u]}
$$
The triviality of $\ilimit^1$ in (\ref{limlim1}) now follows from
proposition \ref{proposition_cokernel_limits}, since both
representations $R_{ab}^{\otimes n}\otimes N$ and $H_1\left(F*F,
\frac{\widetilde{R}_{ab}^{\otimes n}\otimes N}{R_{ab}^{\otimes
n}\otimes N}\right)$ are monoadditive.

The exact sequence (\ref{hnfp}) for $i=n-2$, and the isomorphism
(\ref{isohp}) imply that
$$
\ilimit^j\ H_{n-1}(F_n(\pi), M)=\ilimit^j\ H_{n-2}(F_{n-1}(\pi),
R_{ab}\otimes M),\ \text{for}\ j=0,1.
$$
Continuing this process, we get the isomorphisms
$$
\ilimit^jH_{n-1}(F_n(\pi),M)=\ilimit^j H_0(F,R_{ab}^{n-1}\otimes
M),\ \text{for}\ j=0,1.
$$
The needed statement now follows from theorem
\ref{groupisomorphism}.
\end{proof}

\section{Hihger relation modules}\label{section_Hihger_relation_modules} For a free presentation of a
group $G$,
\begin{equation}\label{extension100}
1\to R\to F\to G\to 1, \end{equation}
the lower central series
quotients $\gamma_n(R)/\gamma_{n+1}(R)$ are called {\it higher
relation modules}. Here $\{\gamma_n(R)\}_{n\geq 1}$ is the lower
central series of $R$ defined inductively as $\gamma_1(R)=R,\
\gamma_{n+1}(R)=[\gamma_n(R),R]$. The action of $G$ on
$\gamma_n(R)/\gamma_{n+1}(R)$ is defined via conjugation in $F$.
The properties of functors
\begin{equation}\label{newm1}
\ilimit\ \gamma_n(R)/[\gamma_n(R),F]=\ilimit\ H_0(G,
\gamma_n(R)/\gamma_{n+1}(R))
\end{equation}
are considered in \cite{EM}. These functors are interesting since
the map between higher relation modules and tensor powers of the
ordinal relation module
$$
\xyma{ (\text{the}\ n\text{th Lie power of}\ R_{ab}=)&
\gamma_n(R)/\gamma_{n+1}(R)\ar@/_25pt/[rr]^{n} \ar@{->}[r] &
R_{ab}^{\otimes n}\ar@{->}[r] & \gamma_n(R)/\gamma_{n+1}(R)}
$$
\vspace{.5cm}

\noindent induces, for all $0\leq i<n$, the natural map between
corresponding limits
$$
\ilimit^i\ \gamma_n(R)/[\gamma_{n+1}(R),F]\to H_{2n-i}(G)\to
\ilimit^i\ \gamma_n(R)/[\gamma_{n+1}(R),F]
$$
by corollary \ref{homology_corollary}. Here we describe the
functor (\ref{newm1}) for $n=2$.

\begin{prop}\label{higherrel}
There is a natural isomorphism
\begin{equation}\label{g2isom}
\ilimit\ \gamma_2(R)/[\gamma_2(R),F]=H_2(G,S^2(\Delta(G))),
\end{equation}
where $S^2$ is the symmetric square functor. If $G$ is a
2-torsion-free group, then
\begin{align}
& \ilimit\ \gamma_2(R)/[\gamma_2(R),F]=H_4(G,\mathbb Z/2)\label{first1}\\
& \ilimit^1\ \gamma_2(R)/[\gamma_2(R),F]=H_3(G,\mathbb
Z/2).\label{second1}
\end{align}
\end{prop}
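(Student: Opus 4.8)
\emph{Reduction to an exterior square.} The plan is to turn the statement into a computation about $\Lambda^2R_{ab}$ and then run the higher-limit machinery of Sections \ref{section_Representations_of_categories_with_pairwise_coproducts}--\ref{section_4-term_exact_sequences} on a Koszul-type $4$-term sequence. First, since $R$ is free, the second lower central quotient is the degree-two component of the free Lie ring on $R_{ab}$, so $\gamma_2(R)/\gamma_3(R)\cong\Lambda^2(R_{ab})$ naturally; as $[\gamma_2(R),R]=\gamma_3(R)$, the group $\gamma_2(R)/[\gamma_2(R),F]$ is the module of coinvariants of $\gamma_2(R)/\gamma_3(R)$ under the conjugation action of $G=F/R$, whence
\[\gamma_2(R)/[\gamma_2(R),F]\;\cong\;H_0(G,\Lambda^2R_{ab})\]
as representations of ${\sf Pres}(G)$. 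I would then feed the Magnus embedding (\ref{magnus}), the sequence $0\to R_{ab}\to P\to\Delta(G)\to 0$ with $P=\mathbb Z G\otimes_{\mathbb Z[F]}\Delta(F)$ a free $\mathbb Z G$-module, into the standard four-term exact sequence attached to the symmetric square of a $\mathbb Z$-split short exact sequence (all three terms are free abelian, so the sequence is $\mathbb Z$-split). This produces a natural exact sequence of $\mathbb Z G$-modules
\[0\longrightarrow\Lambda^2R_{ab}\longrightarrow R_{ab}\otimes P\longrightarrow S^2P\longrightarrow S^2\Delta(G)\longrightarrow 0.\qquad(\star)\]

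\emph{Vanishing of the presentation-dependent terms, and the first isomorphism.} I would split $(\star)$ as $0\to\Lambda^2R_{ab}\to R_{ab}\otimes P\to Q\to 0$ and $0\to Q\to S^2P\to S^2\Delta(G)\to 0$, and apply $H_*(G,-)$ together with $\ilimit^\bullet$. The term $R_{ab}\otimes P$ is controlled by (\ref{4termhom}) with $M=P$: since $P$ is $\mathbb Z G$-induced one gets $H_0(G,R_{ab}\otimes P)\cong H_1(F,P)$ and $H_{>0}(G,R_{ab}\otimes P)=0$, and the vanishing $\ilimit^iH_1(F,P)=0$ for all $i$ is exactly the one proved inside Theorem \ref{groupisomorphism}. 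For $S^2P$ I would use that $P$ is an additive representation (as $\Delta(F_1*F_2)$ splits along the free product), so $S^2P$ is monoadditive with $\Sigma(S^2P)=P\otimes P$; since $P\otimes P$ is $\mathbb Z G$-induced, $H_j(G,S^2P)$ is additive for $j\geq 1$, whence $\ilimit^iH_j(G,S^2P)=0$ for all $i$ and all $j\geq 1$ by Propositions \ref{proposition_cokernel_limits} and \ref{additive_limit_vanish}, while $H_0(G,S^2P)$ is monoadditive and so has vanishing $\ilimit$. The key point making the \emph{unconditional} statement land cleanly is $H_2(G,S^2P)=0$: the only non-free summands of the permutation module $S^2\mathbb Z G$ are the $\mathbb Z[G/\langle t\rangle]$ indexed by involutions $t$, and $H_2(\mathbb Z/2,\mathbb Z)=0$. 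Chasing the two short exact sequences (equivalently, reading off the collapse of the spectral sequence of Corollary \ref{corollary_spectral} applied to $(\star)$) then gives $\ilimit\,H_0(G,\Lambda^2R_{ab})\cong\ilimit\,H_1(G,Q)\cong H_2(G,S^2\Delta(G))$.

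\emph{The $2$-torsion-free case.} If $G$ has no $2$-torsion there are no involutions, so $S^2\mathbb Z G$ and hence $S^2P$ are $\mathbb Z G$-free and $H_{>0}(G,S^2P)=0$ entirely. The second sequence then degenerates, giving $H_1(G,Q)\cong H_2(G,S^2\Delta(G))$, and the same chase carried one homological degree further yields $\ilimit\,H_0(G,\Lambda^2R_{ab})\cong H_2(G,S^2\Delta(G))$ and $\ilimit^1H_0(G,\Lambda^2R_{ab})\cong H_1(G,S^2\Delta(G))$. To pass to $\mathbb Z/2$-coefficients I would apply $H_*(G,-)$ to $0\to\Lambda^2\Delta(G)\to\Delta(G)^{\otimes 2}\to S^2\Delta(G)\to 0$: dimension shifting through the induced module $\mathbb Z G\otimes\Delta(G)$ and through the $\mathbb Z G$-free module $\Lambda^2\mathbb Z G$ (again using $2$-torsion-freeness) identifies both $H_j(G,\Lambda^2\Delta(G))$ and $H_j(G,\Delta(G)^{\otimes 2})$ with $H_{j+2}(G,\mathbb Z)$, under which the inclusion $\Lambda^2\hookrightarrow\otimes^2$ induces multiplication by $2$ (its composite with $\otimes^2\twoheadrightarrow\Lambda^2$ being $2$). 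The resulting long exact sequence and the universal coefficient theorem give $H_j(G,S^2\Delta(G))\cong H_{j+2}(G,\mathbb Z/2)$; taking $j=2$ and $j=1$ produces $H_4(G,\mathbb Z/2)$ and $H_3(G,\mathbb Z/2)$.

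\emph{Main obstacle.} The delicate point throughout is the control of the $2$-primary part carried by the symmetric square. On one side it is exactly the vanishing $H_2(G,S^2P)=0$, i.e. $H_2(\mathbb Z/2,\mathbb Z)=0$, that prevents a spurious constant summand and lets the first, unconditional isomorphism land on $H_2(G,S^2\Delta(G))$; on the other side it is precisely the involution-indexed summands $\mathbb Z[G/\langle t\rangle]$ that obstruct a $\mathbb Z/2$-description for general $G$ and force the $2$-torsion-free hypothesis, after which the $\times 2$ comparison map converts integral group homology into mod-$2$ homology. The remaining care goes into bookkeeping the interaction of the left exact $\ilimit^\bullet$ with the right exact $H_*(G,-)$, which I would organize either by the two short exact sequences above or by the spectral sequence of Corollary \ref{corollary_spectral}.
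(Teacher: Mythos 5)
Your proof follows essentially the same route as the paper: the Koszul four-term sequence obtained from the Magnus embedding, split into two short exact sequences whose presentation-dependent terms ($R_{ab}\otimes P$ and $S^2P$) are killed by monoadditivity and by $H_{2k}(G,S^2P)=0$, yielding first $H_2(G,S^2\Delta(G))$ and then, one degree further, $H_1(G,S^2\Delta(G))$. The only difference is that you supply direct arguments (orbit decomposition of $S^2\mathbb{Z}G$, dimension shifting through $\Lambda^2\hookrightarrow\otimes^2\twoheadrightarrow S^2$) for the facts $H_{2k}(G,S^2P)=0$ and $H_j(G,S^2\Delta(G))\cong H_{j+2}(G,\mathbb{Z}/2)$, which the paper simply cites from \cite{KKS}.
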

\begin{proof}
Recall that, for a short exact sequence of free abelian groups
\begin{equation}\label{ses} 0\to A\to B\to C\to 0,
\end{equation}
there exists a natural long exact sequence, called the Koszul
complex
$$
0\to \Lambda^2(A)\to A\otimes B\to S^2(B)\to S^2(C)\to 0,
$$
where $\Lambda^2$ is the exterior square functor. Using the Magnus
embedding instead of (\ref{ses}), we obtain the following exact
sequence, which is a sequence not only of abelian groups, but also
of $\mathbb Z[G]$-modules:
\begin{equation}\label{l2seq}
0\to \Lambda^2(R_{ab})\to R_{ab}\otimes (\mathbb Z[G]_{\mathbb
Z[F]}\Delta(F)) \to S^2(\mathbb Z[G]\otimes_{\mathbb
Z[F]}\Delta(F))\to S^2(\Delta(G))\to 0
\end{equation}
Denote
$$
N(R,F):=R_{ab}\otimes (\mathbb Z[G]\otimes_{\mathbb
Z[F]}\Delta(F))
$$
and the cokernel of the left hand map in the above sequence
$$
Cok(F,R):=coker\{\Lambda^2(R_{ab})\hookrightarrow N(F,R)\}
$$
After applying the functor $H_*(G,-)$ to the exact sequence of
$\mathbb Z[G]$-modules
$$
0\to \Lambda^2(R_{ab})\to N(F,R)\to Cok(F,R)\to 0
$$
we obtain the sequence
\begin{multline}\label{multseq}
0\to H_1(G, Cok(F,R))\to \gamma_2(R)/[\gamma_2(R),F]\to\\
H_0(G,N(F,R))\to H_0(G,Cok(F,R))\to 0.
\end{multline}
Now we observe that $H_0(F, N(F,R))$ is a monoadditive
representation and the sequence (\ref{multseq}) implies that there
is an isomorphism \begin{equation}\label{ilimg2} \ilimit\
H_1(G,Cok(F,R))=\ilimit\ \gamma_2(R)/[\gamma_2(R),F].
\end{equation}
Recall that, for any group $G$ and a free $\mathbb Z[G]$-module
$P$, $H_{2k}(G,S^2(P))=0,\ k\geq 1$ and $H_i(G,S^2(P))=0$ for all
$i$ if $G$ is 2-torsion-free. The description of $Cok(F,R)$ as a
kernel
\begin{equation}\label{coks2}
0\to Cok(F,R)\to S^2(\mathbb Z[G]\otimes_{\mathbb
Z[F]}\Delta(F))\to S^2(\Delta(G))\to 0
\end{equation}
implies that there is the following exact sequence
\begin{equation}\label{s2cok}
0\to H_2(G,S^2(G))\to H_1(G,Cok(F,R))\to H_1(G, S^2(\mathbb
Z[G]\otimes_{\mathbb Z[F]}\Delta(F)))
\end{equation}
Again we observe that $H_1(G, S^2(\mathbb Z[G]\otimes_{\mathbb
Z[F]}\Delta(F)))$ is a monoadditive representation. The sequence
(\ref{s2cok}) implies that
$$
\ilimit\ H_1(G,Cok(F,R))=\ilimit\ H_2(G,S^2(G))=H_2(G,S^2(G))
$$
and (\ref{g2isom}) follows from (\ref{ilimg2}).

Now assume that $G$ is 2-torsion-free. Then
$H_1(G,Cok(F,R))=H_2(G, S^2(G))=H_4(G,\mathbb Z/2)$ (see, for
example, \cite{KKS}) and the isomorphism (\ref{first1}) follows.
To compute $\ilimit^1$-term, consider the sequence (\ref{multseq})
and observe that $\ilimit^iH_1(G,Cok(F,R))=0,\ i\geq 1$, hence
$$
\ilimit^1\ \gamma_2(R)/[\gamma_2(R),F]=\ilimit\ H_0(G,Cok(F,R)).
$$
Applying the homology functor $H_*(G,-)$ to the sequence
(\ref{coks2}) we get the exact sequence \begin{multline*} 0\to
H_1(G, S^2(\Delta(G)))\to H_0(G, Cok(F,R))\to\\ H_0(G, S^2(\mathbb
Z[G]\otimes_{\mathbb Z[F]}\Delta(F)))\to H_0(G, S^2(\Delta(G)))\to
0.
\end{multline*}
The representation $H_0(G, S^2(\mathbb Z[G]\otimes_{\mathbb
Z[F]}\Delta(F)))$ is monoadditive, hence
$$
\ilimit\ H_0(G, Cok(F,R))=\ilimit\
H_1(G,S^2(\Delta(G)))=H_1(G,S^2(\Delta(G)))=H_3(G,\mathbb Z/2)
$$
and the isomorphism (\ref{second1}) follows.
\end{proof}

One can use the results and methods from \cite{KKS} and
\cite{Stohr} to get the descriptions of limits like (\ref{first1})
and (\ref{second1}). For example, consider the next higher
relation module. In order to compute the limit
$$
\ilimit\ \gamma_3(R)/[\gamma_3(R),F]
$$
consider the following exact sequence from \cite{Stohr}:
$$
0\to H_1(G, K_1^3)\to \gamma_3(R)/[\gamma_3(R),F]\to H_0(G,
\mathbb Z[G]\otimes_{\mathbb Z[F]}\Delta(F)\otimes S^2(R_{ab}))\to
H_0(G, K_1^3)\to 0,
$$
where $K_1^3$ is a $\mathbb Z[G]$-module which lives in the
following diagram with short exact sequences
$$
\xyma{& & R_{ab}\otimes S^2(\Delta(G))\ar@{>->}[d]\\ K_1^3
\ar@{>->}[r] & S^3(\mathbb Z[G]\otimes_{\mathbb Z[F]}\Delta(F))
\ar@{->>}[r] & \frac{S^3(\mathbb Z[G]\otimes_{\mathbb
Z[F]}\Delta(F))}{K_1^3}\ar@{->>}[d]\\ & & S^3(\Delta(G))}
$$
Suppose that $G$ is a 3-torsion-free group. Then the torsion
subgroup of the quotient $\gamma_3(R)/[\gamma_3(R),F]$ consists of
3-torsion elements only \cite{Stohr}. The reasoning from the
proof of proposition \ref{higherrel} leads to the following:\\ \\
\noindent {\it for a 3-torsion-free group $G$, $ \ilimit\
\gamma_3(R)/[\gamma_3(R),F]=H_4(G,\mathbb Z/3). $}
\section{Hochschild and Cyclic homology.}\label{section_Hochschild_and_Cyclic_homology}

Let $k$ be a field. All algebras in this section are assumed to be associative unital algebras over $k,$ and  $\otimes=\otimes_k.$  If $F$ is an algebra and $M$ is an $F$-bimodule, we denote by $F^e=F\otimes F^{op}$ the enveloping algebra of $F$ and following Quillen we denote the space of  coinvariants   $M_\natural:=M/[F,M]=H_0(F,M).$

\subsection{Magnus embedding for algebras.}

Let  $A_1$ and $A_2$ be algebras and $L$ and $N$ be $(A_1,A_2)$-bimodules. A {\it  bimodule derivation} $\delta: L\to N$ is a $k$-linear map such that
$\delta(amb)=\delta(am)b-a\delta(m)b+a\delta(mb).$ Given a left-right split short exact sequence of $(A_1, A_2)$-bimodules
\begin{equation}\label{right-left_split}
0 \longrightarrow N \overset{f}{\longrightarrow} M \overset{g}{\longrightarrow} L \longrightarrow 0
\end{equation}
 one associates to it a bimodule derivation by choosing to it for $f$ a left $A_1$-splitting $l:L\to M$ and a right $A_2$-splitting $r:L\to M.$  There is then a unique $k$-linear map $\delta : L\to N$ such that $g\delta=l-r$ and this map is a bimodule derivation.

Let $B_1=A_1/I_1$ and $B_2=A_2/I_2$ be quotient algebras and $M$ be a $(A_1,A_2)$-bimodule. Then we put $M\!\!\uparrow^{(B_1,B_2)}:=M/(I_1M+MI_2)\cong B_1\otimes_{A_1} M\otimes_{A_2} B_2.$ It gives a well defined functor $(-)\!\!\uparrow^{(B_1,B_2)}: {\sf Bimod}(A_1,A_2)\to {\sf Bimod}(B_1,B_2).$ For the sake of simplicity, we put $\uparrow:=\uparrow^{(B_1,B_2)}.$

\begin{Lemma}{\rm (\cite[proposition 3.2]{Butler_King}\rm )}
As above, let \eqref{right-left_split} be a right-left split sequence of $(A_1,A_2)$-bimodules, let $\delta: L\to N$ be an associated bimodule derivation and $B_i=A_i/I_i$ be quotient algebras. Then there is an exact sequence of $(B_1,B_2)$-bimodules
$$\frac{I_1M\cap MI_2}{I_1MI_2}\longrightarrow \frac{I_1L\cap LI_2}{I_1LI_2}\overset{\tilde\delta }\longrightarrow N\!\!\uparrow \overset{f\uparrow}\longrightarrow M\!\!\uparrow \overset{g\uparrow}\longrightarrow L\!\!\uparrow \longrightarrow 0,$$ where $\tilde \delta$ is the bimodule homomorphism induced by $\delta.$
\end{Lemma}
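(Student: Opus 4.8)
The plan is to apply the right-exact reduction functor $\uparrow = B_1\otimes_{A_1}(-)\otimes_{A_2}B_2$ to the short exact sequence \eqref{right-left_split} and then run a direct diagram chase inside $M$, using the two splittings $l,r$ of $g$ and the associated derivation $\delta$ (characterised by $f\delta=l-r$) to identify all kernels and images. The engine of the whole argument is the pair of direct-sum decompositions forced by the \emph{one-sided} splittings: since $l$ is left $A_1$-linear with $gl=\mathrm{id}_L$ we have $M=f(N)\oplus l(L)$ as left $A_1$-modules, and since $r$ is right $A_2$-linear with $gr=\mathrm{id}_L$ we have $M=f(N)\oplus r(L)$ as right $A_2$-modules, whence
\[ I_1M=f(I_1N)\oplus l(I_1L),\qquad MI_2=f(NI_2)\oplus r(LI_2). \]
In particular $f(N)\cap I_1M=f(I_1N)$ and $f(N)\cap MI_2=f(NI_2)$, which is exactly what is needed to read off the relevant kernels below.

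First I would dispatch the soft right-hand end. As $\uparrow$ is right exact, $g\!\!\uparrow$ is surjective (exactness at $L\!\!\uparrow$), and $\mathrm{im}(f\!\!\uparrow)=\ker(g\!\!\uparrow)$: for the nontrivial inclusion, if $g(m)\in I_1L+LI_2$ one lifts the $I_1L$-part and the $LI_2$-part along the surjection $g$ and subtracts them from $m$, so that modulo $I_1M+MI_2$ the class of $m$ comes from $\ker g=f(N)$. This settles exactness at $L\!\!\uparrow$ and $M\!\!\uparrow$ without touching the splittings.

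The heart of the matter is exactness at $N\!\!\uparrow$ and at $\frac{I_1L\cap LI_2}{I_1LI_2}$. For $N\!\!\uparrow$, the composite $f\!\!\uparrow\circ\tilde\delta$ vanishes because for $x\in I_1L\cap LI_2$ one has $l(x)\in I_1M$ and $r(x)\in MI_2$, so $f\delta(x)=l(x)-r(x)\in I_1M+MI_2$; conversely, given $n$ with $f(n)=u+v$, $u\in I_1M$, $v\in MI_2$, I set $x:=g(u)=-g(v)\in I_1L\cap LI_2$, write $u-l(x)=f(n_1)$ and $v+r(x)=f(n_2)$, and obtain $n=\delta(x)+n_1+n_2$; the decompositions above then force $n_1\in I_1N$ and $n_2\in NI_2$, so $\bar n=\tilde\delta([x])$. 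For exactness at $\frac{I_1L\cap LI_2}{I_1LI_2}$, the image of the map induced by $g$ lies in $\ker\tilde\delta$ by the same bookkeeping applied to $y\in I_1M\cap MI_2$; conversely, if $x\in I_1L\cap LI_2$ satisfies $\delta(x)\in I_1N+NI_2$, I split $\delta(x)=\delta_1+\delta_2$ with $\delta_1\in I_1N$, $\delta_2\in NI_2$ and verify that $y:=l(x)-f(\delta_1)=r(x)+f(\delta_2)$ lies in $I_1M\cap MI_2$ with $g(y)=x$.

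The main obstacle — and the only place the hypothesis that \eqref{right-left_split} be left-right split is genuinely used — is precisely these last two identifications. Without the decompositions $f(N)\cap I_1M=f(I_1N)$ and $f(N)\cap MI_2=f(NI_2)$ one could only conclude that $\ker(f\!\!\uparrow)$ equals $\frac{f(N)\cap(I_1M+MI_2)}{I_1N+NI_2}$, with no description by $\delta$. The remaining points are routine: $\tilde\delta$ is well defined because the derivation identity gives $\delta(I_1LI_2)\subseteq I_1N+NI_2$, and all four maps are $(B_1,B_2)$-bimodule homomorphisms, the only nonformal check being that the derivation identity together with the two decompositions yields $\delta(ax)\equiv a\delta(x)$ and $\delta(xb)\equiv\delta(x)b$ modulo $I_1N+NI_2$ for $x\in I_1L\cap LI_2$.
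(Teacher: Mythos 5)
The paper does not prove this lemma at all: it is imported verbatim as \cite[Proposition 3.2]{Butler_King}, so there is no internal proof to compare your argument against. Your blind proof is, as far as I can check, correct and complete, and it supplies exactly the verification the paper omits. The two one-sided decompositions $M=f(N)\oplus l(L)$ (as left $A_1$-modules) and $M=f(N)\oplus r(L)$ (as right $A_2$-modules), hence $f(N)\cap I_1M=f(I_1N)$ and $f(N)\cap MI_2=f(NI_2)$, are indeed the crux, and every one of your four exactness checks goes through: the identity $n=\delta(x)+n_1+n_2$ with $x=g(u)=-g(v)$ correctly identifies $\ker(f\!\!\uparrow)$ with $\mathrm{im}(\tilde\delta)$, and the element $y=l(x)-f(\delta_1)=r(x)+f(\delta_2)$ correctly witnesses $\ker(\tilde\delta)\subseteq\mathrm{im}$. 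Two small remarks. First, you silently (and correctly) use $f\delta=l-r$ where the paper misprints $g\delta=l-r$; it is worth flagging that correction explicitly. Second, your final bimodule-linearity check is better phrased as a consequence of $f\delta=l-r$ together with the one-sided linearity of $l$ and $r$ rather than of the derivation identity itself: for $x\in LI_2$ one computes $f(\delta(ax)-a\delta(x))=ar(x)-r(ax)\in MI_2\cap f(N)=f(NI_2)$, and symmetrically on the right; the derivation identity with $b=1$ gives no information. With that phrasing the "nonformal check" you mention closes cleanly.
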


We denote by $\Omega(A)$ the universal derivation $A$-bimodule of a $k$-algebra $A$. It is defined as the kernel of the multiplication map $\Omega(A)={\rm ker}(\mu : A\otimes A \to A).$ The universal derivation $\delta:A \to \Omega(A)$ is given by $\delta(a)=a\otimes 1-1\otimes a,$ and it is the associated bimodule derivation to the left-right split short exact sequence
\begin{equation}
0 \longrightarrow \Omega(A)\longrightarrow A\otimes A \overset{\mu }\longrightarrow A \longrightarrow 0
\end{equation}
with splittings $l(a)=a\otimes 1$ and $r(a)=1\otimes a.$ Let $\bar A$ denote the vector space quotient $A/k.$ Then there is a well-defined isomorphism of vector spaces  $ A\otimes \bar A\cong \Omega(A)$ given by $x\otimes \bar y\mapsto x\otimes y-xy\otimes 1$ (see \cite[proposition 10.1.3]{Ginzburg}). The composition of $\delta$ with this isomorphism is the map $A\to A\otimes \bar A$ given by $a\mapsto 1\otimes \bar a.$ It follows that
\begin{equation}\label{ker_coker_delta}
{\rm ker}(\delta)=k  \ \ \ \text{ and } \ \ \ {\rm coker}(\delta)=\bar A^{\otimes 2}.
\end{equation}

Let $B=A/I$ be a quotient algebra, then we denote the enveloping algebra by $B^e=B\otimes B^{\rm op}$ and $\uparrow^{B^e}=\uparrow^{(B,B)}:{\sf Bimod}(A)\to {\sf Bimod}(B).$

\begin{Proposition}
Let $A$ be a $k$-algebra and $B=A/I$ be its quotient algebra. Then there is an exact sequence of $B$-bimodules
$$0\longrightarrow I/I^2 \overset{\tilde \delta}\longrightarrow \Omega(A)\!\!\uparrow^{B^e} \overset{d}\longrightarrow B\otimes B \overset{\mu}\longrightarrow B \longrightarrow 0,$$ where $\mu$ is the multiplication map, $d$ is induced by inclusion $\Omega(A)\hookrightarrow A\otimes A$ and $\tilde\delta$ is induced by the universal derivation $\delta:A\to \Omega(A).$
\end{Proposition}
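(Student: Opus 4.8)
The plan is to apply the preceding Butler--King lemma to the universal derivation sequence, and then to identify the resulting five-term sequence with the claimed four-term one by computing the two relevant subquotients. First I would take the right-left split short exact sequence of $A$-bimodules
\begin{equation}
0 \longrightarrow \Omega(A)\longrightarrow A\otimes A \overset{\mu}\longrightarrow A \longrightarrow 0
\end{equation}
as the input $\eqref{right-left_split}$, with the splittings $l(a)=a\otimes 1$ and $r(a)=1\otimes a$, so that the associated bimodule derivation is precisely the universal derivation $\delta:A\to\Omega(A)$. Here the roles of the algebras are $A_1=A_2=A$ and $I_1=I_2=I$, so the quotient on both sides is $B=A/I$. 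Feeding this data into the Butler--King lemma produces an exact sequence of $B$-bimodules
\begin{equation}
\frac{I(A\otimes A)\cap (A\otimes A)I}{I(A\otimes A)I}\longrightarrow \frac{I\Omega(A)\cap \Omega(A)I}{I\Omega(A)I}\overset{\tilde\delta}\longrightarrow \Omega(A)\!\!\uparrow^{B^e} \overset{d}\longrightarrow (A\otimes A)\!\!\uparrow^{B^e} \overset{\mu\uparrow}\longrightarrow A\!\!\uparrow^{B^e} \longrightarrow 0.
\end{equation}

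The bulk of the work is then in identifying the four visible right-hand terms with the target sequence. For the two rightmost terms, I would compute $A\!\!\uparrow^{B^e}=A/(IA+AI)=A/I=B$ and, using the standard isomorphism $(A\otimes A)\!\!\uparrow^{B^e}\cong B\otimes_A(A\otimes A)\otimes_A B\cong B\otimes B$, check that under this identification $\mu\uparrow$ becomes the multiplication map $\mu:B\otimes B\to B$; this also realizes $d$ as the map induced by the inclusion $\Omega(A)\hookrightarrow A\otimes A$, as claimed. The key remaining step is the identification of the leftmost nontrivial term, namely establishing the isomorphism of $B$-bimodules
\begin{equation}
\frac{I\Omega(A)\cap \Omega(A)I}{I\Omega(A)I}\;\cong\; I/I^2,
\end{equation}
so that $\tilde\delta$ becomes the stated map $I/I^2\to\Omega(A)\!\!\uparrow^{B^e}$ induced by $\delta$. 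I expect this to be the main obstacle: one must analyze the intersection $I\Omega(A)\cap\Omega(A)I$ inside $A\otimes A$ explicitly, presumably by exploiting the concrete description $\delta(a)=a\otimes 1-1\otimes a$ together with the vector-space identification $A\otimes\bar A\cong\Omega(A)$ recalled in the excerpt, and then tracking how $\delta$ restricts to $I$ modulo $I^2$.

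Finally, I would verify exactness on the left, i.e. that $\tilde\delta$ is injective and that the first term in the Butler--King sequence maps to zero, so that the five-term sequence truncates to the asserted four-term sequence beginning with $0\to I/I^2$. The computations in $\eqref{ker_coker_delta}$, giving $\ker(\delta)=k$ and $\operatorname{coker}(\delta)=\bar A^{\otimes 2}$, should control the injectivity of the induced map $\tilde\delta$ on $I/I^2$; concretely, since $\delta$ is injective modulo the one-dimensional kernel $k$ and $1\notin I$ in a relevant sense, the induced map on the ideal $I$ should have trivial kernel after passing to $I/I^2$. Assembling these identifications and the exactness from the lemma then yields the desired exact sequence of $B$-bimodules $0\to I/I^2\to\Omega(A)\!\!\uparrow^{B^e}\to B\otimes B\to B\to 0$.
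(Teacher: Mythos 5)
Your overall strategy is the paper's: feed the split sequence $0\to\Omega(A)\to A\otimes A\overset{\mu}\to A\to 0$, with its associated bimodule derivation $\delta$, into the Butler--King lemma and identify the resulting terms. But you have misapplied the lemma at the decisive spot. In the Butler--King five-term sequence the second term is $\frac{I_1L\cap LI_2}{I_1LI_2}$, formed from the \emph{cokernel} $L$ of the input sequence, not from the kernel $N$. Here $L=A$, so that term is $(IA\cap AI)/(IAI)$, which equals $I/I^2$ on the nose because $A$ is unital ($IA=AI=I$ and $IAI=I^2$); there is nothing left to identify. You instead wrote $\bigl(I\Omega(A)\cap\Omega(A)I\bigr)/\bigl(I\Omega(A)I\bigr)$ --- i.e.\ you substituted $N=\Omega(A)$ where $L=A$ belongs --- and then declared the isomorphism of that group with $I/I^2$ to be ``the main obstacle.'' That isomorphism is not what the lemma produces and is not needed; pursuing it would lead you into an unnecessary and genuinely harder computation inside $A\otimes A$.

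With the correct substitution the proof reduces to the short list of identifications the paper records: $A\!\uparrow^{B^e}=B$, $(A\otimes A)\!\uparrow^{B^e}=B\otimes B$ (under which $\mu\!\uparrow$ becomes multiplication and the middle map is induced by $\Omega(A)\hookrightarrow A\otimes A$, as you correctly note), $(IA\cap AI)/(IAI)=I/I^2$, and, for the first term, $I(A\otimes A)\cap(A\otimes A)I=(I\otimes A)\cap(A\otimes I)=I\otimes I=I(A\otimes A)I$ over the field $k$, so that term vanishes. This vanishing, together with exactness of the Butler--King sequence, is exactly what makes $\tilde\delta$ injective; your proposed route to injectivity via $\ker(\delta)=k$ and ``$1\notin I$'' is not the operative mechanism and would require a separate argument. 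In short: right lemma, right input sequence, but the terms of its output are misread, which both manufactures a spurious hard step and leaves the actual (trivial) verifications unstated.
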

\begin{proof}
It follows from the previous lemma and the equations $B= A\!\!\uparrow^{B^e},$ $B\otimes B= (A\otimes A)\!\!\uparrow^{B^e}$, $I/I^2=(IA\cap AI)/(IAI)$ and $((I\otimes A)\cap (A\otimes I))/(I\otimes I)=0.$
\end{proof}

If $F$ is a (quasi-)free algebra and $A\cong F/I,$ then $A$-bimodule $I/I^2$ is said to be {\it relation bimodule}. By the previous proposition we have a short exact sequence of $A$-bimodules
\begin{equation}\label{Magnus_s.e.s}
0 \longrightarrow I/I^2\overset{\tilde \delta}\longrightarrow \Omega(F)\!\!\uparrow^{A^e} \longrightarrow \Omega(A)\longrightarrow 0.
\end{equation}
$\Omega(F)$ is a projective $F$-bimodule, and hence, $ \Omega(F)\!\!\uparrow^{A^e}$ is a projective $A$-bimodule. Since $\Omega(A)$ is left-right projective, the sequence \eqref{Magnus_s.e.s} left-right splits, and hence $I/I^2$ is left-right projective. The map $\tilde\delta: I/I^2\to \Omega(F)\!\!\uparrow^{A^e}$ is called Magnus embedding of the relation bimodule $I/I^2.$

\subsection{Higher limit formula for Hochschild homology.}

In this section, we will consider the category ${\sf Pres}(A)$
of free presentations of an algebra $A$. The objects of ${\sf Pres}(A)$ are surjective homomorphisms $\pi:F\epi A$ where $F$ is a free algebra and morphisms $f:(\pi_1:F_1\epi A)\to (\pi_2:F_2\epi A)$ are homomorphisms $f:F_1\to F_2$ such that $\pi_1=\pi_2f$. The category ${\sf Pres}(G)$ has coproducts given by $(\pi_1:F_1\epi G)\sqcup(\pi_2:F_2\epi G)=(\pi_1+\pi_2:F_1*F_2\epi G),$ where $F_1*F_2$ is the free product of algebras. Therefore this category is contractible. In particular, for any vector space $V$ higher limits $\ilimit^i V$ vanish for $i>0.$
 We will always denote by $I$ the kernel of an epimorphism $\pi:F\epi A.$ Hence an object of ${\sf Pres}(G)$ defines a short exact sequence
\begin{equation}1\longrightarrow I \longrightarrow F \overset{\pi}{\longrightarrow} A \longrightarrow 1.
\end{equation}
and $I$ can be considered as a functor $I:{\sf Pres}(A)\to {\sf Mod}(A).$
All the limits considered bellow in this section are taken over the
category ${\sf Pres}(A)$.

\begin{Lemma}\label{omega_vanish_hiher_limits} If we consider $F^e$ as a functor $F^e:{\sf Pres}(A)\to {\sf Alg},$ sending $(F\epi A)$ to $F^e,$ then for any $F^e$-representation $\mathcal H$ of the category ${\sf Pres}(A)$
$$\ilimit^i \ \mathcal H \otimes_{F^e}\Omega(F)=0$$ for all $i.$
\end{Lemma}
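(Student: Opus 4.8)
The plan is to deduce this from Proposition \ref{additive_limit_vanish}. Set $\mathcal{O}=F^e:{\sf Pres}(A)\to {\sf Alg}$ and regard the universal derivation bimodule as the $\mathcal{O}$-representation $\mathcal F$ sending $c=(F\epi A)$ to $(F^e,\Omega(F))$: indeed $\Omega(F)$ is a left $F^e$-module, and a morphism $f:F_1\to F_2$ of presentations induces $\Omega(f):\Omega(F_1)\to \Omega(F_2)$ compatibly with the algebra map $F_1^e\to F_2^e$, so this is functorial. Since $\mathcal{H}$ is an $F^e$-representation, it is precisely an $\mathcal{O}^{\rm op}$-representation in the sense of Proposition \ref{additive_limit_vanish}, because $(F^e)^{\rm op}\cong F^e$ via the flip of tensor factors. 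Hence, once we know that $\Omega(F)$ is an \emph{additive} $\mathcal{O}$-representation, Proposition \ref{additive_limit_vanish} immediately yields $\ilimit^i\ \mathcal H\otimes_{F^e}\Omega(F)=0$ for all $i$.

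So the whole content is to verify additivity, i.e.\ that for $c=(F\epi A)$ with $c\sqcup c=(F*F\epi A)$ the canonical map
$$
{\sf T}_{\Omega(F)}:\Omega(F)\!\!\uparrow_{i_1}\oplus\ \Omega(F)\!\!\uparrow_{i_2}\longrightarrow \Omega(F*F)
$$
is an isomorphism of $(F*F)^e$-modules, where $i_1,i_2:F\to F*F$ are the coproduct inclusions and $\Omega(F)\!\!\uparrow_{i_l}=(F*F)^e\otimes_{F^e}\Omega(F)$.

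I would prove this by exploiting that every object of ${\sf Pres}(A)$ has a \emph{free} algebra $F$, for which $\Omega(F)$ is the free $F$-bimodule (free $F^e$-module) on the differentials $\{\delta(s):s\in S\}$ of a free generating set $S$ of $F$. The free product $F*F$ is the free algebra on the disjoint union $i_1(S)\sqcup i_2(S)$, so $\Omega(F*F)$ is the free $(F*F)^e$-module on $\{\delta(i_1(s))\}\cup\{\delta(i_2(s))\}$. On the other hand, inducing the free $F^e$-module $\Omega(F)$ along $i_l$ produces the free $(F*F)^e$-module on $\{\delta(i_l(s)):s\in S\}$. Hence ${\sf T}_{\Omega(F)}$ carries a free basis of the source bijectively onto a free basis of the target, and is therefore an isomorphism. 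Without choosing generators, one obtains the same decomposition functorially from the universal property $\mathrm{Der}(B,-)\cong \mathrm{Hom}_{B^e}(\Omega(B),-)$: a derivation out of a free product is the same as a pair of derivations out of the two factors, and passing through the induction–restriction adjunction $\uparrow_{i_l}\dashv\ \downarrow_{i_l}$ and Yoneda reproduces exactly ${\sf T}_{\Omega(F)}$.

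The one point needing care — and the only real obstacle — is matching the abstract map ${\sf T}_{\Omega(F)}=(\Omega(i_1)^\#,\Omega(i_2)^\#)$ from the definition of additivity with the explicit basis description; concretely, checking that $\Omega(i_l)^\#(b\otimes \delta(s))=b\cdot\delta(i_l(s))$, so that the two induced submodules sit on disjoint parts of the basis of $\Omega(F*F)$ and together exhaust it. Once this identification is in place, freeness makes both injectivity and surjectivity transparent, additivity follows, and Proposition \ref{additive_limit_vanish} closes the argument.
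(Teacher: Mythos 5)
Your argument is correct and is essentially the paper's own proof: the paper likewise reduces the lemma to Proposition \ref{additive_limit_vanish} together with the additivity of $\Omega(F)$ as an $F^e$-representation, which it simply cites from Bergman--Dicks \cite[2.3]{Bergman_Dicks}. The only difference is that you supply the verification of additivity yourself (via the free $F^e$-basis $\{\delta(s)\}$ of $\Omega(F)$ for a free algebra $F$, or equivalently the universal property of derivations out of a free product), and your identification of an $F^e$-representation with an $(F^e)^{\rm op}$-representation via the flip is the correct way to match the hypotheses of the proposition.
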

\begin{proof}
It follows from proposition \ref{additive_limit_vanish} and the fact that $\Omega(F)$ is a $F^e$-additive representation (see \cite[2.3]{Bergman_Dicks}).
\end{proof}

\begin{Lemma} Let $F$ be a quasi-free algebra, $I$ its ideal and $A=F/I.$
Then the map induced by multiplication
$$(I^n/I^{n+1})\otimes_A (I^m/I^{m+1}) \longrightarrow I^{n+m}/I^{n+m+1}$$
is an isomorphism of $A$-bimodules. In particular, $(I/I^2)^{\underset{A}\otimes n}\cong I^n/I^{n+1}.$
\end{Lemma}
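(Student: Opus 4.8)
The plan is to reduce the statement over $A$ to the much cleaner assertion that the multiplication map
\[
m_{n,m}\colon I^n\otimes_F I^m \longrightarrow I^{n+m}
\]
is an isomorphism of $F$-bimodules for all $n,m$, and then to obtain the $A$-version by base change along $F\epi A$. The whole argument rests on the observation that a quasi-free algebra is \emph{hereditary}, i.e. has left and right global dimension at most $1$; once this is available, the only nonformal inputs are a dimension shift for $\mathsf{Tor}$ and the right-flatness of the ideals $I^n$. So first I would record that $F$ is hereditary. Applying $-\otimes_F M$ to the fundamental sequence $0\to \Omega(F)\to F\otimes F \overset{\mu}{\longrightarrow} F\to 0$, which is split as right $F$-modules, for an arbitrary left $F$-module $M$ yields a short exact sequence $0\to \Omega(F)\otimes_F M\to F\otimes M\to M\to 0$. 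Here $F\otimes M$ is a free left $F$-module (as $k$ is a field), while $\Omega(F)\otimes_F M$ is projective because $\Omega(F)$ is a projective bimodule: it is a summand of a free bimodule $(F\otimes F)^{(S)}$, so $\Omega(F)\otimes_F M$ is a summand of $(F\otimes M)^{(S)}$. Thus $\mathrm{pd}_F M\le 1$, and symmetrically on the right, so $F$ is hereditary and $\mathsf{Tor}^F_i$ vanishes for $i\ge 2$.

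Next I would prove that $m_{n,m}$ is an isomorphism. Tensoring the ideal sequence $0\to I^n\to F\to F/I^n\to 0$ with the left $F$-module $I^m$ produces the exact sequence
\[
\mathsf{Tor}^F_1(F/I^n,I^m)\longrightarrow I^n\otimes_F I^m \longrightarrow I^m \longrightarrow (F/I^n)\otimes_F I^m\longrightarrow 0 .
\]
Since the middle map factors as $I^n\otimes_F I^m\overset{m_{n,m}}{\longrightarrow}I^{n+m}\hookrightarrow I^m$ and $(F/I^n)\otimes_F I^m=I^m/I^{n+m}$, the map $m_{n,m}$ is surjective, and its kernel is the image of $\mathsf{Tor}^F_1(F/I^n,I^m)$. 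A dimension shift along $0\to I^m\to F\to F/I^m\to 0$ identifies this group with $\mathsf{Tor}^F_2(F/I^n,F/I^m)$, which vanishes because $F$ is hereditary. Hence $m_{n,m}$ is an isomorphism of $F$-bimodules.

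Finally I would pass to $A$. Using $I^n\otimes_F A\cong I^n/I^{n+1}$ and associativity of the tensor product, one gets $(I^n/I^{n+1})\otimes_A(I^m/I^{m+1})\cong I^n\otimes_F(I^m/I^{m+1})$. Since $I^n$ is a right ideal of the right-hereditary ring $F$ it is right-flat, so applying $I^n\otimes_F-$ to $0\to I^{m+1}\to I^m\to I^m/I^{m+1}\to 0$ and invoking the isomorphisms $m_{n,m+1}$ and $m_{n,m}$ identifies $I^n\otimes_F(I^m/I^{m+1})$ with $I^{n+m}/I^{n+m+1}$. Tracing an element $\bar x\otimes\bar y$ through these identifications shows the resulting composite is exactly the multiplication map of the statement, and the special case $(I/I^2)^{\otimes_A n}\cong I^n/I^{n+1}$ then follows by iterating $\mu_{1,n-1}$. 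I expect the only genuinely nonformal point — and the one place where quasi-freeness is essential — to be the heredity of $F$; granting that, everything else is a dimension shift and base change, so the main obstacle is really packaging the global-dimension input correctly rather than any delicate computation.
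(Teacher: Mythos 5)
Your proof is correct and follows essentially the same route as the paper: both arguments hinge on the vanishing of $\mathsf{Tor}_2^F$ coming from the global dimension bound $\le 1$ for quasi-free algebras, which yields $I^n\otimes_F I^m\cong I^{n+m}$ via the multiplication map, and then pass to $A$ by applying $-\otimes_F A$. The differences are cosmetic: you prove the bigraded statement for all $n,m$ directly and supply a proof of heredity from the splitting of $0\to\Omega(F)\to F\otimes F\to F\to 0$, whereas the paper reduces to the special case $(I/I^2)^{\otimes_A n}\cong I^n/I^{n+1}$ and quotes the global dimension bound without proof.
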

\begin{proof}
It is sufficient to prove that the multiplication map induces an isomorphism $(I/I^2)^{\underset{A}\otimes n}\cong I^n/I^{n+1}.$ For any two ideals $I,J\triangleleft F$ (of any algebra $F$) there is a short exact sequence
\begin{equation}0\longrightarrow {\sf Tor}_2^F(F/I,F/J)\longrightarrow I\otimes_F J \longrightarrow IJ \longrightarrow 0
\end{equation}
where the right-hand map is the multiplication map.
Since $F$ is quasi-free, its global dimension is less than or equal to $1,$  and hence $I\otimes_F J\cong IJ$ for any ideals $I,J$ of $F.$ By induction we obtain $I^{\underset{F}\otimes n}\cong I^n.$ Applying the functor $-\otimes_F A$ to the last isomorphism, we get $I^{\underset{F}\otimes n-1}\otimes_F (I/I^2)\cong I^n/I^{n+1}.$ Finally, using the isomorphism $I\otimes_F (I/I^2)\cong (I/I^2)\otimes_A (I/I^2),$ we obtain $(I/I^2)^{\underset{A}\otimes n}\cong I^n/I^{n+1}.$
\end{proof}

\begin{Lemma}\label{lemma_limits_relation_bimodule} For any $A$-bimodule $M$ the following holds
$$\ilimit^i\ (I^n/I^{n+1})\otimes_A M=\left\{\begin{array}{l l}
\Omega^n_{\rm nc}(A)\otimes_A M, & \text{ if }\ i=n\\
0, & \text{ if }\ i\ne n
\end{array}\right. $$ for $n\geq 0$, where $\Omega^n_{\rm nc}(A)=\Omega(A)^{\underset{A}\otimes n}$ {\rm (}see \cite{Ginzburg}{\rm )}.
\end{Lemma}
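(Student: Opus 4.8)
The plan is to induct on $n$, using the Magnus short exact sequence \eqref{Magnus_s.e.s} to peel off one power at a time, and exploiting Lemma \ref{omega_vanish_hiher_limits} to annihilate the middle term so that the long exact sequence of higher limits produces a dimension shift $\ilimit^i \rightsquigarrow \ilimit^{i-1}$. The base case $n=0$ is immediate: $I^0/I^1$ is the constant functor $A$, so $(I^0/I^1)\otimes_A M \cong M$ is constant, and since ${\sf Pres}(A)$ is contractible we get $\ilimit^0 M = M = \Omega^0_{\rm nc}(A)\otimes_A M$ and $\ilimit^i M = 0$ for $i>0$. Throughout, the statement is quantified over all bimodules $M$, so the inductive hypothesis may be applied to auxiliary coefficients.

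For the inductive step, assume the claim for $n-1$. The sequence $0\to I/I^2\to \Omega(F)\!\!\uparrow^{A^e}\to \Omega(A)\to 0$ is left--right split, and all three terms are left--right projective. Tensoring on the left by $(I^{n-1}/I^n)\otimes_A(-)$ preserves exactness (left splitness), identifies the left term with $I^n/I^{n+1}$ via the preceding lemma, and transports the right $A$-splitting to the resulting sequence; hence we may further apply $(-)\otimes_A M$ and obtain the short exact sequence of (bimodule-valued) representations
\[ 0\to (I^n/I^{n+1})\otimes_A M\to (I^{n-1}/I^n)\otimes_A \Omega(F)\!\!\uparrow^{A^e}\otimes_A M\to (I^{n-1}/I^n)\otimes_A \Omega(A)\otimes_A M\to 0. \]

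The crux is that the middle term is $\ilimit$-acyclic in every degree. Writing $\Omega(F)\!\!\uparrow^{A^e}\cong A\otimes_F \Omega(F)\otimes_F A$ and absorbing the outer copies of $A$ against the surrounding tensor factors, the middle term becomes $(I^{n-1}/I^n)\otimes_F \Omega(F)\otimes_F M$, and the standard identity $X\otimes_F N\otimes_F Y\cong (Y\otimes_k X)\otimes_{F^e} N$ rewrites this as $\mathcal H\otimes_{F^e}\Omega(F)$ with $\mathcal H = M\otimes_k (I^{n-1}/I^n)$, regarded as an $(F^e)^{\rm op}$-representation via the left $F$-action on $M$ and the right $F$-action on $I^{n-1}/I^n$ (both through $F\epi A$). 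Since $\mathcal H$ is functorial on ${\sf Pres}(A)$, Lemma \ref{omega_vanish_hiher_limits} gives $\ilimit^i(\mathcal H\otimes_{F^e}\Omega(F))=0$ for all $i$ as $k$-modules, and hence the middle term also vanishes as a bimodule-valued functor, because higher limits are computed by a cochain complex preserved by the exact forgetful functor to ${\sf Mod}(k)$. Feeding this into the long exact sequence yields $\ilimit^0(I^n/I^{n+1})\otimes_A M=0$ and isomorphisms $\ilimit^i(I^n/I^{n+1})\otimes_A M\cong \ilimit^{i-1}(I^{n-1}/I^n)\otimes_A M'$ for $i\geq 1$, where $M'=\Omega(A)\otimes_A M$. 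The inductive hypothesis for the bimodule $M'$ identifies the right-hand side with $\Omega^{n-1}_{\rm nc}(A)\otimes_A M' = \Omega^{n}_{\rm nc}(A)\otimes_A M$ exactly when $i-1=n-1$, and with $0$ otherwise, which is the claim for $n$.

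I expect the main obstacle to be the identification of the middle term with an object of the form $\mathcal H\otimes_{F^e}\Omega(F)$: one must carry out the left/right module bookkeeping carefully, keeping track of which $A$-actions are consumed by $\otimes_A$ and verifying the functoriality of $\mathcal H$, so that Lemma \ref{omega_vanish_hiher_limits} genuinely applies. A secondary technical point is ensuring that tensoring the left--right split Magnus sequence successively on both sides preserves exactness, which is precisely where the left--right projectivity of $I/I^2$ and $\Omega(A)$ established after \eqref{Magnus_s.e.s} is used.
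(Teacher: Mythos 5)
Your proof is correct and takes essentially the same route as the paper: induct on $n$, tensor the left--right split Magnus sequence \eqref{Magnus_s.e.s} with $I^{n-1}/I^{n}$ and $M$, identify the middle term as something of the form $\mathcal H\otimes_{F^e}\Omega(F)$ so that Lemma \ref{omega_vanish_hiher_limits} kills all its higher limits, and read off the dimension shift from the long exact sequence. The only divergence is cosmetic and occurs at the last step: you place $\Omega(A)$ between $I^{n-1}/I^{n}$ and $M$ and absorb it into the coefficient bimodule $M'=\Omega(A)\otimes_A M$ before invoking the inductive hypothesis, whereas the paper writes the quotient term as $\Omega(A)\otimes_A(I^{n-1}/I^{n})\otimes_A M$ and pulls $\Omega(A)\otimes_A(-)$ outside $\ilimit^{i-1}$ using left-projectivity of $\Omega(A)$ --- your arrangement is, if anything, slightly cleaner since it avoids commuting a tensor functor past a higher limit.
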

\begin{proof}
Consider the left-right split exact sequence of Magnus embedding
$$I/I^2 \mono  \Omega(F)\!\!\uparrow^{A^e} \epi \Omega(A).$$
Tensoring by $(I^{n-1}/I^n)\otimes_A M$, we obtain the following
short exact sequence $$0 \longrightarrow (I^n/I^{n+1})\otimes_A M
\longrightarrow  \Omega(F)\!\!\uparrow^{A^e}\otimes_A
(I^{n-1}/I^n)\otimes_A M\longrightarrow \Omega(A)\otimes_A
(I^{n-1}/I^n)\otimes_A M\longrightarrow 0.$$ If we denote
$L:=(I^{n-1}/I^n)\otimes_A M,$ the middle term can be written as
follows $$(A \otimes_{F} \Omega(F)\otimes_F A\big)\otimes_A L=A
\otimes_{F} \Omega(F)\otimes_F  L= (L\otimes A)\otimes_{F^e}
\Omega(F).$$ By lemma \ref{omega_vanish_hiher_limits} all the
higher limits of the middle term vanish.  Then using the long
exact sequence and the fact that $\Omega(A)$ is a projective as a
left $A$-module, we obtain
$$\ilimit^i ((I^n/I^{n+1})\otimes_A M)=\ilimit^{i-1} (\Omega(A)\otimes_A (I^{n-1}/I^n)\otimes_A M)= \Omega(A)\otimes_A \ilimit^{i-1}  \big((I^{n-1}/I^n)\otimes_A M\big).$$ Then by induction we get the claimed statement.
\end{proof}

\begin{Theorem}\label{hoshc}
For an algebra $A$ and an $A$-bimodule $M$, for $n\geq 1$,
there are natural isomorphism
$$
H_{2n-i}(A,M)\simeq\ilimit^i\  (I^n/I^{n+1})\otimes_{A^e} M \ \text{ for } \  i<n.
$$
\end{Theorem}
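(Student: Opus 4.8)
The plan is to reduce the statement to the already-established Lemma~\ref{lemma_limits_relation_bimodule} together with one extra ingredient, a Hochschild dimension shift, and to organise the combination through a hyperhomology spectral sequence. First I would record the identification $(I^n/I^{n+1})\otimes_{A^e}M\cong H_0\bigl(A,(I^n/I^{n+1})\otimes_A M\bigr)$: writing $\mathcal N:=(I^n/I^{n+1})\otimes_A M$ for the bimodule-valued representation on ${\sf Pres}(A)$, the coequaliser defining $\otimes_{A^e}$ imposes exactly the two relations $xa\otimes m=x\otimes am$ and $bx\otimes m=x\otimes mb$, the first being the relation for $\otimes_A$ and the second being passage to coinvariants, so that $(I^n/I^{n+1})\otimes_{A^e}M$ is the zeroth Hochschild homology of $\mathcal N$. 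Thus the theorem asks for $\ilimit^i H_0(A,\mathcal N)$. The two inputs I would use are: (a) Lemma~\ref{lemma_limits_relation_bimodule}, which says $\ilimit^q\mathcal N$ is concentrated in $q=n$ with value $\Omega^n_{\rm nc}(A)\otimes_A M$; and (b) the dimension shift $H_l\bigl(A,\Omega(A)\otimes_A N\bigr)\cong H_{l+1}(A,N)$ for $l\ge 1$ and any bimodule $N$, which follows by feeding the left--right split multiplication sequence $0\to\Omega(A)\to A^e\to A\to 0$ into $-\otimes_A N$ and using that the induced bimodule $A^e\otimes_A N$ is Hochschild-acyclic in positive degrees.

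Next I would set up the combination. Fix, once and for all, a projective resolution $P_\bullet\to A$ of $A$ over $A^e$ (independent of the presentation); then $P_\bullet\otimes_{A^e}\mathcal N$ is a complex of representations of ${\sf Pres}(A)$ whose homology is the Hochschild homology $H_\ast(A,\mathcal N)$ and whose zeroth homology is the functor $H_0(A,\mathcal N)$ we are after. Deriving $\ilimit$ on this complex produces two spectral sequences converging to a common hyper-limit $\mathbb H^\bullet$. Computing $\ilimit^q$ column by column, exactness of $P_j\otimes_{A^e}-$ and Lemma~\ref{lemma_limits_relation_bimodule} concentrate everything in limit-degree $q=n$ and collapse the page to the Hochschild complex of $\Omega^n_{\rm nc}(A)\otimes_A M$; iterating the shift (b) $n$ times, valid precisely while the Hochschild degree stays $\ge 1$, gives $\mathbb H^m=H_{n-m}\bigl(A,\Omega^n_{\rm nc}(A)\otimes_A M\bigr)=H_{2n-m}(A,M)$ for $m<n$. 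This is where the restriction $i<n$ is born: it is exactly the range in which the iterated shift is unobstructed.

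The other spectral sequence has $E_2$-page $\ilimit^p H_q(A,\mathcal N)$, whose bottom row $q=0$ is $\ilimit^p H_0(A,\mathcal N)=\ilimit^p\bigl[(I^n/I^{n+1})\otimes_{A^e}M\bigr]$. To conclude I would show this sequence degenerates, by proving that $H_l(A,\mathcal N)$ is a \emph{constant} functor for every $l\ge 1$ (hence $\ilimit$-acyclic, since ${\sf Pres}(A)$ is contractible). This I would get from the tensored Magnus sequence $0\to\mathcal N\to\mathcal M\to\mathcal L\to0$, with $\mathcal M=\Omega(F)\!\!\uparrow^{A^e}\otimes_A(I^{n-1}/I^n)\otimes_A M$ and $\mathcal L=\Omega(A)\otimes_A(I^{n-1}/I^n)\otimes_A M$: since $\Omega(F)\!\!\uparrow^{A^e}$ is a projective bimodule, $\mathcal M$ is Hochschild-acyclic in positive degrees, so $H_l(A,\mathcal N)\cong H_{l+1}(A,\mathcal L)$ for $l\ge1$, and then (b) turns this into $H_{l+2}\bigl(A,(I^{n-1}/I^n)\otimes_A M\bigr)$; an induction on $n$ brings it down to the constant $H_{l+2n}(A,M)$. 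With the positive rows $\ilimit$-acyclic and lying in negative total degree, the bottom row survives untouched and the edge isomorphism gives $\ilimit^i H_0(A,\mathcal N)\cong\mathbb H^i=H_{2n-i}(A,M)$ for $i<n$.

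The main obstacle is twofold. The essential mathematical point is the degree bookkeeping around the dimension shift (b): it is clean only while the Hochschild degree is $\ge 1$, and the edge case $l=0$ carries correction terms coming from $H_0(A,A^e\otimes_A N)$ and $H_0(A,N)$ --- this is what both produces the sharp bound $i<n$ and explains why no equally clean formula is claimed at $i=n$ (compare the short exact sequence at $i=n$ in Theorem~\ref{groupisomorphism}). The technical point is the interchange of $\ilimit^q$ with $P_j\otimes_{A^e}-$, i.e. with the tensor factor $\bar A^{\otimes j}$, a possibly infinite-dimensional $k$-space; as higher limits need not commute with infinite direct sums, I would avoid the infinite bar resolution altogether and instead run the whole argument inductively through the finite Magnus sequence, applying Corollary~\ref{corollary_spectral} to the resulting short complexes exactly as in the proof of Theorem~\ref{groupisomorphism}. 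The Hochschild replacement for the vanishing \eqref{triviality} used there is the identity $\ilimit^i H_0(A,\mathcal M)=0$, which holds for all $i$ because $H_0(A,\mathcal M)$ is of the form $\mathcal H\otimes_{F^e}\Omega(F)$ and is therefore killed by Lemma~\ref{omega_vanish_hiher_limits}.
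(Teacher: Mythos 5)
Your proposal is essentially correct, and after your own self-correction at the end it lands on the paper's actual argument; but the route you describe in detail first is organized differently, so a comparison is worth making. The paper does not use a presentation-independent resolution and a pair of hyperhomology spectral sequences. Instead it splices the tensored Magnus sequences $0\to I^{i+1}/I^{i+2}\to \Omega(F)\!\!\uparrow^{A^e}\otimes_A(I^i/I^{i+1})\to A^e\otimes_A(I^i/I^{i+1})\to I^i/I^{i+1}\to 0$ into a presentation-dependent projective resolution $P_\bullet$ of $A$, truncates it at degree $2n-1$, and prepends the kernel $(I^n/I^{n+1})\otimes_{A^e}M$ to obtain a \emph{finite} cochain complex $\mathcal F^\bullet$ whose cohomology is already $H_{2n-i}(A,M)$ by the resolution property. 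A single application of Corollary \ref{corollary_spectral}, with the $E_1$-page computed from Lemma \ref{lemma_limits_relation_bimodule} and Lemma \ref{omega_vanish_hiher_limits}, then finishes the proof. This truncation device kills three birds at once: it makes the complex bounded (so Corollary \ref{corollary_spectral}, which requires a bounded-below complex, applies), it places the target $\ilimit^i(I^n/I^{n+1})\otimes_{A^e}M$ in the $p=0$ column of $E_1$, and it makes your second spectral sequence and the constancy argument for $H_{\ge 1}(A,\mathcal N)$ unnecessary. Your primary plan, by contrast, needs exactly the two things you flag: the interchange of $\ilimit^q$ with $P_j\otimes_{A^e}-$ for an infinitely generated projective (which genuinely fails in general, since higher limits do not commute with infinite direct sums) and convergence for a complex that is unbounded in the wrong direction; your fallback of reverting to the Magnus resolution is the right fix, and your dimension-shift computation $H_l(A,(I^n/I^{n+1})\otimes_A M)\cong H_{l+2n}(A,M)$ for $l\ge 1$ is correct but redundant in the paper's formulation. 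Your diagnosis of where the bound $i<n$ comes from (the $d_2$ hitting the $p=2$, $q=n-1$ entry at total degree $n$, equivalently the failure of the shift at Hochschild degree $0$) matches the structure of the paper's $E_1$-page.
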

\begin{Corollary} There is an isomorphism
$$HH_{2n-i}(A)\simeq \ilimit^i\  (I^n/I^{n+1})_\natural  \ \text{ for } \  i<n.$$
\end{Corollary}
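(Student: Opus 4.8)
The plan is to induct on $n$, reducing everything to the Magnus embedding of the relation bimodule together with the acyclicity statement of Lemma \ref{omega_vanish_hiher_limits}. The starting point is the identification
$$ (I^n/I^{n+1})\otimes_{A^e} M \;\cong\; \big((I^n/I^{n+1})\otimes_A M\big)_\natural \;=\; H_0\big(A,(I^n/I^{n+1})\otimes_A M\big), $$
coming from the general fact $N\otimes_{A^e}M\cong(N\otimes_A M)_\natural$ for $A$-bimodules; so the functor to compute is the zeroth Hochschild homology of the bimodule $(I^n/I^{n+1})\otimes_A M$, whose higher limits the theorem predicts. For $n=1$ this is formula \eqref{int_HH}, which serves as the base of the induction.

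For the inductive step I would tensor the Magnus embedding $I/I^2\mono \Omega(F)\!\!\uparrow^{A^e}\epi \Omega(A)$ on the right over $A$ with $I^{n-1}/I^n$; since the Magnus sequence is left--right split, the resulting sequence $0\to I^n/I^{n+1}\to \Omega(F)\!\!\uparrow^{A^e}\!\otimes_A (I^{n-1}/I^n)\to \Omega(A)\otimes_A(I^{n-1}/I^n)\to 0$ is again exact. Applying the right exact functor $-\otimes_{A^e}M$ produces a four--term exact sequence of representations of ${\sf Pres}(A)$
$$ 0\to \mathsf{Tor}_1^{A^e}\!\big(\Omega(A)\otimes_A(I^{n-1}/I^n),M\big)\to (I^n/I^{n+1})\otimes_{A^e}M\to E\otimes_{A^e}M\to \big(\Omega(A)\otimes_A(I^{n-1}/I^n)\big)\otimes_{A^e}M\to 0, $$
with $E=\Omega(F)\!\!\uparrow^{A^e}\!\otimes_A(I^{n-1}/I^n)$. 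Rewriting $E\otimes_{A^e}M$ as $\Omega(F)\otimes_{F^e}(-)$ through the induction--restriction adjunction, Lemma \ref{omega_vanish_hiher_limits} shows that all higher limits of the middle term vanish, exactly as the middle term of the group--theoretic sequence \eqref{4multline} does.

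The two outer terms are then handled by dimension shifting. Using cyclic invariance of $\otimes_{A^e}$ I rewrite the cokernel as an $(n-1)$-instance, $\big(\Omega(A)\otimes_A(I^{n-1}/I^n)\big)\otimes_{A^e}M\cong(I^{n-1}/I^n)\otimes_{A^e}\big(M\otimes_A\Omega(A)\big)$, so the inductive hypothesis applied to the bimodule $M\otimes_A\Omega(A)$ computes its higher limits, and the Hochschild shift $H_{j}(A,M\otimes_A\Omega(A))\cong H_{j+1}(A,M)$ for $j\ge 1$ — from $0\to M\otimes_A\Omega(A)\to M\otimes_A A^e\to M\to 0$ and acyclicity of the extended bimodule — puts them in the right degree. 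For the kernel term one shows, by splicing the Magnus sequence with $0\to\Omega(A)\to A^e\to A\to 0$ (the bimodule counterpart of Lemma \ref{lemma_limits_relation_bimodule}), that $\mathsf{Tor}_1^{A^e}(\Omega(A)\otimes_A(I^{n-1}/I^n),M)\cong \mathsf{Tor}_2^{A^e}(I^{n-1}/I^n,M)\cong \mathsf{Tor}_{2n}^{A^e}(A,M)=H_{2n}(A,M)$ is a \emph{constant} functor, whose higher limits vanish since ${\sf Pres}(A)$ is contractible. Feeding these into the long exact sequences of the two short exact sequences extracted from the four--term sequence, and using $\ilimit^iK\cong\ilimit^{i-1}(\text{cokernel})$ from the acyclicity of $E\otimes_{A^e}M$, yields $\ilimit^i\,(I^n/I^{n+1})\otimes_{A^e}M\cong H_{2n-i}(A,M)$ for $i<n$.

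The main obstacle is the bookkeeping of the dimension--shift isomorphisms: both the Hochschild shift and the shift $\mathsf{Tor}_j^{A^e}(I^m/I^{m+1},M)\cong\mathsf{Tor}_{j+2}^{A^e}(I^{m-1}/I^m,M)$ hold only in strictly positive homological degree, with genuine low--degree correction terms at each application. Controlling these corrections is precisely what confines the clean isomorphism to the range $i<n$ and is responsible for its failure at $i=n$, where — as in Theorem \ref{groupisomorphism} — one expects only a short exact sequence. Checking that every one of these identifications is natural in the presentation, so that they are isomorphisms of representations rather than merely objectwise, is the other point that requires care.
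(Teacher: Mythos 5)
Your argument is essentially correct, but it takes a genuinely different route from the paper's. In the paper this corollary is just the case $M=A$ of Theorem \ref{hoshc} (so that $(I^n/I^{n+1})\otimes_{A^e}A=(I^n/I^{n+1})_\natural$ and $H_*(A,A)=HH_*(A)$); the proof printed under the corollary is really the proof of that theorem, and it is a single global argument: the Magnus sequence is spliced into a $2$-periodic projective bimodule resolution $P_\bullet$ of $A$ with $P_{2m}=A^e\otimes_A(I^m/I^{m+1})$ and $P_{2m+1}=\Omega(F)\!\!\uparrow^{A^e}\otimes_A(I^m/I^{m+1})$, truncated at $I^n/I^{n+1}$, tensored with $M$, and fed into the hyper-limit spectral sequence of Corollary \ref{corollary_spectral}; Proposition \ref{additive_limit_vanish} and Lemma \ref{lemma_limits_relation_bimodule} then kill every $E_1$-term off the column $p=0$ on the diagonals $p+q=i\leq n$. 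You instead induct on $n$, tensoring the Magnus sequence by $I^{n-1}/I^n$ and chasing long exact sequences of higher limits through the resulting $4$-term sequence --- exactly the strategy the paper uses for group homology in Theorem \ref{groupisomorphism} via \eqref{4multline}. Both proofs rest on the same pillars (the sequence \eqref{Magnus_s.e.s}, the isomorphism $I^n/I^{n+1}\cong (I/I^2)^{\otimes_A n}$, Lemma \ref{omega_vanish_hiher_limits}, contractibility of ${\sf Pres}(A)$). The spectral-sequence route is shorter, inherits naturality from the functoriality of the resolution, and also reads off what happens at $i=n$ from the edge; your route is more elementary but pays in bookkeeping: the auxiliary coefficients $M\otimes_A\Omega(A)$ must be carried through the induction, the rotation identity $(\Omega(A)\otimes_A N)\otimes_{A^e}M\cong N\otimes_{A^e}(M\otimes_A\Omega(A))$ and the shift $H_j(A,M\otimes_A\Omega(A))\cong H_{j+1}(A,M)$ for $j\geq 1$ must be checked to be natural in the presentation, and the identification of the kernel term with the constant functor $\mathsf{Tor}^{A^e}_{2n}(A,M)=H_{2n}(A,M)$ must be verified. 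All of this goes through, using the left--right projectivity of $I^m/I^{m+1}$ and the projectivity of $\Omega(F)\!\!\uparrow^{A^e}\otimes_A(-)$ that the paper establishes.

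One point to repair: you cannot take formula \eqref{int_HH} as the base of the induction, since it is precisely the $i=0$ instance of the statement being proved. Your own machinery closes this loop: for every $n$ the case $i=0$ follows directly, without induction, because the kernel term is the constant functor $H_{2n}(A,M)$ and the image $\mathcal S$ of $(I^n/I^{n+1})\otimes_{A^e}M$ in the middle term satisfies $\ilimit\mathcal S=0$ (it embeds in a term with vanishing limit). So the induction should be organized as: $i=0$ directly for all $n$, and $1\leq i<n$ by descent to the pair $(n-1,i-1)$ with coefficients $M\otimes_A\Omega(A)$.
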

\begin{proof}
If $P$ is a projective $A$-bimodule and $M$ and $N$ are left-right projective $A$-bimodules then $P\otimes_A M$ is a projective $A$-bimodule and $M\otimes_A N$ is left-right projective. It follows from the fact that the functors  ${\rm Hom}_{A^e}(P\otimes_A M,-)\cong {\rm Hom}_{A^e}(P,{\rm Hom}_A(M,-))$, ${\rm Hom}_A(M\otimes_A N,-)\cong {\rm Hom}_A(M,{\rm Hom}_A(N,-))$ and ${\rm Hom}_{A^{op}}(M\otimes_A N,-)\cong {\rm Hom}_{A^{op}}(N,{\rm Hom}_{A^{op}}(M,-))$ are exact.

 The Magnus embedding gives the following exact sequence of representations
$I/I^2\mono \Omega(F)\!\uparrow^{A^e} \to  A^e \epi A.$  Then tensoring by $I^n/I^{n+1}$ we obtain exact sequences
$$0\longrightarrow I^{n+1}/I^{n+2} \longrightarrow \Omega(F)\!\uparrow^{A^e}\otimes_A (I^n/I^{n+1}) \longrightarrow A^e\otimes_A (I^n/I^{n+1}) \longrightarrow I^n/I^{n+1}\longrightarrow 0.$$
Consider the projective resolution $P_\bullet$ of the bimodule $A$ given by the infinite Yoneda product of these sequences. Then $P_{2n}=A^e\otimes_A (I^n/I^{n+1})$ and $P_{2n+1}=\Omega(F)\!\!\uparrow^{A^e}\otimes_A (I^n/I^{n+1})$ and there is an embedding $I^n/I^{n+1}\mono P_{2n-1}.$  Since $P_\bullet$ is a projective resolution, the homology of the complex $P_\bullet \otimes_{A^e} M$ coincides with $H_i(A,M).$ Let $\mathcal F^\bullet$ be the following cochain complex concentrated in non-negative degrees:
$$0 \longrightarrow (I^n/I^{n+1})\otimes_{A^e} M \longrightarrow P_{2n-1}\otimes_{A^e}M \longrightarrow P_{2n-2}\otimes_{A^e}M \longrightarrow \dots \longrightarrow P_0\otimes_{A^e}M\longrightarrow 0.$$ It is easy to see that $H^i(\mathcal F^\bullet)=H_{2n-i}(A,M)$ for $i\geq 0.$ The terms of $\mathcal F^\bullet$ are given by $\mathcal F^0=(I^n/I^{n+1})\otimes_{A}M$ and  $\mathcal F^p=P_{2n-p}\otimes_{A^e}M$ for $p>0.$  Therefore we obtain
$$\mathcal{F}^{2i}=M\otimes_A (I^{n-i}/I^{n-i+1}) \ \ \text{ and } \ \ \ \mathcal F^{2i-1}=\Big((I^{n-i}/I^{n-i+1})\otimes_A M\Big)\otimes_{F^e} \Omega(F)$$ for $i>0.$ Consider the spectral sequence $E$ from corollary \ref{corollary_spectral} associated with the complex $\mathcal F^\bullet$. Its first page is $E_1^{pq}=\ilimit^q \mathcal F^p,$ and $E\Rightarrow H_{2n-i}(A,M).$ The functor $\Omega(F)$ is an additive $\mathcal O$-representation, where $\mathcal O(F\epi A)=F^e$ (see \cite[2.3]{Bergman_Dicks}).  Using proposition \ref{additive_limit_vanish} and the previous lemma we get
$$E^{pq}_1=\ilimit^q \mathcal F^p=\left\{\begin{array}{l l}
0, & \ \text{ if } \ p\ne 2(n-q) \ \text{ and } p\ne 0 \\
\ilimit^q \ (I^n/I^{n+1}) \otimes_{A^e} M, & \ \text{ if }\ p=0\\
 \Omega^p_{\rm nc}(A)\otimes_A M &\ \text{ if } p=2(n-q)>0
\end{array}\right.$$
Since for any $i\leq n$ on the diagonal $p+q=i$ on $E_1$ all nonzero terms concentrated in the column $p=0$ and equal to $\ilimit^i \ (I^n/I^{n+1})\otimes_{A^e} M,$ we obtain the required isomorphisms.
 \end{proof}

\subsection{Cyclic homology of augmented algebras.}

\begin{Lemma} Let $A$ be an algebra. Then the following holds.
\begin{equation}
\ilimit^i \ F/I^2=\left\{\begin{array}{l l}
k, & i=0\\
\bar A^{\otimes 2}, & i=1\\
0, & i>1.
\end{array}\right.
\end{equation}
\end{Lemma}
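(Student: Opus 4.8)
The plan is to filter $F/I^2$ by the $I$-adic filtration $I^2\subseteq I\subseteq F$, which gives a short exact sequence of representations of ${\sf Pres}(A)$
$$0 \longrightarrow I/I^2 \longrightarrow F/I^2 \longrightarrow A \longrightarrow 0,$$
and then to run the associated long exact sequence of higher limits. Here $A$ is a constant representation, so its higher limits vanish in positive degrees because ${\sf Pres}(A)$ is contractible, while $\ilimit^i(I/I^2)$ is already known: Lemma \ref{lemma_limits_relation_bimodule} with $n=1$ and $M=A$ gives $\ilimit^1(I/I^2)=\Omega(A)$ and $\ilimit^i(I/I^2)=0$ for $i\neq 1$. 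Substituting these into the long exact sequence immediately yields $\ilimit^i(F/I^2)=0$ for $i\geq 2$, together with a four term exact sequence
$$0 \longrightarrow \ilimit^0(F/I^2) \longrightarrow A \overset{\partial}{\longrightarrow} \Omega(A) \longrightarrow \ilimit^1(F/I^2) \longrightarrow 0,$$
where $\partial$ is the connecting homomorphism. Everything therefore reduces to identifying $\partial$.

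The crux is to show that, under the above identification of $\ilimit^1(I/I^2)$ with $\Omega(A)$, the map $\partial$ is exactly the universal derivation $\delta\colon A\to\Omega(A)$. To see this I would compare the filtration sequence with the Magnus sequence \eqref{Magnus_s.e.s}
$$0 \longrightarrow I/I^2 \overset{\tilde\delta}{\longrightarrow} \Omega(F)\!\!\uparrow^{A^e} \longrightarrow \Omega(A) \longrightarrow 0.$$
By Lemma \ref{omega_vanish_hiher_limits} all higher limits of the middle term vanish, so the connecting homomorphism of this sequence is an isomorphism $\partial_{\mathrm M}\colon\Omega(A)\overset{\cong}{\longrightarrow}\ilimit^1(I/I^2)$, and this is precisely the identification used above. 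The universal derivation $\delta_F\colon F\to\Omega(F)$, followed by the projection $\Omega(F)\to\Omega(F)\!\!\uparrow^{A^e}$, defines a map $F\to\Omega(F)\!\!\uparrow^{A^e}$ which kills $I^2$, since $\delta_F(ab)=a\,\delta_F(b)+\delta_F(a)\,b$ for $a,b\in I$ and elements of $I$ act as $0$ on $\Omega(F)\!\!\uparrow^{A^e}$; hence it factors through a map $\beta\colon F/I^2\to\Omega(F)\!\!\uparrow^{A^e}$. A direct check shows that $\beta$ restricts to $\tilde\delta$ on $I/I^2$ and induces $\delta$ on the quotient $A$ (the class of $x\otimes 1-1\otimes x$ maps to $\bar x\otimes 1-1\otimes\bar x=\delta(\bar x)$), so $\beta$ is a morphism from the filtration sequence to the Magnus sequence which is the identity on $I/I^2$ and is $\delta$ on the cokernels. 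By naturality of the connecting homomorphism we then get $\partial=\partial_{\mathrm M}\circ\delta$, and since $\partial_{\mathrm M}$ is an isomorphism, $\partial$ is identified with $\delta$.

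Finally, I would conclude by invoking \eqref{ker_coker_delta}, namely $\ker\delta=k$ and $\mathrm{coker}\,\delta=\bar A^{\otimes 2}$: the four term sequence then gives $\ilimit^0(F/I^2)=\ker\partial=k$ and $\ilimit^1(F/I^2)=\mathrm{coker}\,\partial=\bar A^{\otimes 2}$, while $\ilimit^i(F/I^2)=0$ for $i\geq 2$, as claimed. The only genuinely delicate step is this identification of the connecting map: one must check that $\beta$ is well defined on $F/I^2$ and that the map it induces on cokernels really is the universal derivation $\delta$ rather than some twist of it, after which the result follows formally from the long exact sequences and \eqref{ker_coker_delta}.
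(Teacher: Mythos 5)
Your proof is correct and is essentially the paper's own argument: the paper likewise builds the map $\hat\delta\colon F/I^2\to\Omega(F)\!\!\uparrow^{A^e}$ induced by the universal derivation, relates the $I$-adic filtration sequence $I/I^2\mono F/I^2\epi A$ to the Magnus sequence through it, uses the vanishing of all higher limits of $\Omega(F)\!\!\uparrow^{A^e}$, and concludes from \eqref{ker_coker_delta}. The only difference is packaging: the paper runs this comparison through the octahedron axiom in the derived category, identifying ${\bf R}\!\ilimit F/I^2$ with the two-term complex $A\overset{\delta}\to\Omega(A)$, whereas you unwind exactly the same data into the long exact sequence of higher limits together with the naturality of the connecting homomorphism.
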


\begin{proof} Note that $\Omega(F)\!\!\uparrow^{A^e}\cong \Omega(F)/(I\Omega(F)+\Omega(F)I)$ and the composition of the universal derivation and the projection $F\overset{\delta}\to \Omega(F)\epi \Omega(F)\!\!\uparrow^{A^e}$ vanishes on $I^2$. Hence the universal derivation induces the map $\hat \delta:F/I^2\to \Omega(F)\!\!\uparrow^{A^e}$ so that $\hat \delta|_{I/I^2}$ coincides with Magnus embedding $\tilde \delta:I/I^2\mono \Omega(F)\!\!\uparrow^{A^e}.$ Then the commutative  triangle
$$\xymatrix{
 & I/I^2\ar[dl]\ar[dr]^{\tilde \delta} & \\
F/I^2 \ar[rr]^{\hat \delta} && \Omega(F)\!\!\uparrow^{A^e} }$$ and
short exact sequences $ I/I^2\mono \Omega(F)\!\!\uparrow^{A^e}\epi
\Omega(A),$ $I/I^2\mono F/I^2\epi A$ give an octahedron in the
derived category $\mathbf{D}^+({\sf Vect}_k^{{\sf Pres}(A)}):$
$$\xymatrix{
& I/I^2\ar[d] \ar@{=}[r] & I/I^2\ar[d]^{\tilde \delta}  & \\
\Omega^{[0,1]}(A) \ar@{..>}[r]\ar@{=}[d] & F/I^2 \ar[r]^{\hat \delta}\ar[d] & \Omega(F)\!\!\uparrow^{A^e}\ar@{..>}[r]\ar[d] &  \Omega^{[0,1]}(A)[1]\ar@{=}[d]  \\
\Omega^{[0,1]}(A) \ar[r] & A\ar[d]\ar[r]^{\delta} & \Omega(A)\ar[r]\ar[d] & \Omega^{[0,1]}(A)[1] \\
& I/I^2[1]\ar@{=}[r] & I/I^2[1], & }$$ where $\Omega^{[0,1]}(A)$
is the complex $\cdots \to 0\to A\overset{\delta} \to \Omega(A)\to
0 \to \cdots .$ By lemma \ref{omega_vanish_hiher_limits}, we have
$\ilimit^i \ \Omega(F)\!\!\uparrow^{A^e}=0,$ and hence ${\bf
R}\!\!\ilimit \ \Omega(F)\!\!\uparrow^{A^e}=0.$
 Then, applying the total derived functor ${\bf R}\!\!\ilimit $ to the distinguished triangle $\Omega^{[0,1]}(A)\to F/I^2\to \Omega(F)\!\!\uparrow^{A^e}\to \Omega^{[0,1]}(A)[1],$ we obtain the isomorphism  ${\bf R}\!\!\ilimit\ F/I^2\cong \Omega^{[0,1]}(A)$ in the derived category ${\bf D}^+({\sf Vect}_k).$ Since ${\rm ker}(\delta)=k$ and ${\rm Coker}(\delta)=\bar A^{\otimes 2}$ (see \eqref{ker_coker_delta}), we get the claimed statement.
\end{proof}

\begin{Lemma} Let $A$ be an algebra and $n\geq 2.$ Then the unit homomorphism  $\eta_F: k\to F$ induces an isomorphism
\begin{equation} \ilimit\ F/I^n\cong k.
\end{equation}
\end{Lemma}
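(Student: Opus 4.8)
The plan is to deduce the isomorphism from two facts that are easy to isolate: the unit $\eta_F$ gives a natural monomorphism of the constant representation $k$ into $F/I^n$, and the vector space $\ilimit\ F/I^n$ is one-dimensional. Because $k$ is a field, an injective $k$-linear map from $k$ into a one-dimensional space is automatically an isomorphism, so once both facts are in place I get the result — and, crucially, I avoid having to trace the map $\eta_F$ through the (derived-category) identifications used in the previous lemma.

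First I would verify that $\eta_F$ induces a natural transformation $\bar\eta\colon k\to F/I^n$ of representations of ${\sf Pres}(A)$, given on each presentation by $\lambda\mapsto \lambda\cdot 1 + I^n$. This is natural because a morphism $f\colon F_1\to F_2$ of presentations preserves the unit and satisfies $f(I_1)\subseteq I_2$, hence commutes with these maps. Since $A=F/I\neq 0$ we have $1\notin I\supseteq I^n$, so $\bar\eta$ is a pointwise monomorphism. As $\ilimit$ is left exact and $\ilimit\ k=k$ (the category ${\sf Pres}(A)$ being connected, indeed contractible), applying $\ilimit$ to $\bar\eta$ yields an injection $k\hookrightarrow \ilimit\ F/I^n$.

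Next I would show $\ilimit\ F/I^n\cong k$ by induction on $n$, the base case $n=2$ being exactly the previous lemma. For $n\geq 3$ I would use the short exact sequence of representations $0\to I^{n-1}/I^n\to F/I^n\to F/I^{n-1}\to 0$. By lemma \ref{lemma_limits_relation_bimodule} applied with $M=A$ (so that $(I^{n-1}/I^n)\otimes_A A\cong I^{n-1}/I^n$) one has $\ilimit^i\ I^{n-1}/I^n=0$ for $i\neq n-1$; in particular both $\ilimit\ I^{n-1}/I^n$ and $\ilimit^1\ I^{n-1}/I^n$ vanish, since $n-1\geq 2$. The long exact sequence of higher limits then collapses to an isomorphism $\ilimit\ F/I^n\cong \ilimit\ F/I^{n-1}$ induced by the projection, and the inductive hypothesis identifies the target with $k$. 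Combining this with the second paragraph, $\ilimit\ F/I^n$ is one-dimensional and receives an injection from $k$, so that injection — which is the map induced by $\eta_F$ — is an isomorphism.

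The only subtle point I anticipate is the compatibility of the abstract isomorphism $\ilimit\ F/I^n\cong k$ with the unit map: the chain of identifications bottoms out at the $n=2$ lemma, whose proof passed through the octahedron in the derived category, and chasing $\eta_F$ through that would be unpleasant. The field-theoretic observation in the first paragraph is precisely what lets me sidestep this bookkeeping, reducing the whole claim to the injectivity of $\bar\eta$ together with the dimension count.
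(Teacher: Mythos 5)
Your proof is correct and follows essentially the same route as the paper: induction on $n$ with base case the preceding lemma, and inductive step via the short exact sequence $I^{n-1}/I^n \mono F/I^n \epi F/I^{n-1}$ together with the vanishing of $\ilimit$ and $\ilimit^1$ of $I^{n-1}/I^n$ supplied by lemma \ref{lemma_limits_relation_bimodule}. Your opening paragraph --- using left exactness of $\ilimit$ and a dimension count to confirm that the abstract isomorphism really is the one induced by $\eta_F$ --- is a small but welcome refinement of a point the paper's proof leaves implicit.
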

\begin{proof}
The proof is by induction on $n.$ The base case us given by the
previous lemma.  The inductive step follows from the short exact
sequence $I^n/I^{n+1}\mono F/I^{n+1}\epi F/I^n$ and the fact that
$\ilimit\ I^n/I^{n+1}=0=\ilimit^1\ I^n/I^{n+1}$ for $n\geq 2$
(lemma \ref{lemma_limits_relation_bimodule}).
\end{proof}

 An {\it augmented algebra} is a $k$-algebra together with an algebra homomorphism $\varepsilon_A: A\to k$ called {\it augmentation map}. The augmentation map provides a natural structure of $A$-module on $k.$  The augmentation ideal is denoted by  $\Delta(A):={\rm ker}(\varepsilon_A).$ The unit homomorphism $\eta_A:k\to A$ and the augmentation map $\varepsilon_A:A\to k$ give a natural decomposition $A=k\oplus \Delta(A).$
  If $\pi : F\epi A$ is a presentation of an augmented algebra, then the free algebra $F$ has a natural augmentation map $\varepsilon_F= \varepsilon_A\circ\pi.$

\begin{Lemma}\label{lemma_limits_In}
Let $A$ be an augmented algebra and $n\geq 1.$ Then there are
isomorphisms
\begin{equation}\ilimit^i\ \Delta(F)=0,\ \ \  \ilimit^{i} \  I^n=\ilimit^{i-1} \ \Delta(F)/I^n
\end{equation} for all $i.$
In particular, $\ilimit \ I^n=0.$
\end{Lemma}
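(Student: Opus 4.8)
The plan is to first establish the vanishing $\ilimit^i\,\Delta(F)=0$ and then deduce the two remaining assertions formally from the long exact sequence of higher limits. The hypothesis that $A$ is augmented is what makes $\Delta(F)$ a representation of ${\sf Pres}(A)$ in the first place: since $\varepsilon_F=\varepsilon_A\circ\pi$ and every morphism $f$ of presentations satisfies $\pi_1=\pi_2 f$, the augmentations are compatible, $\varepsilon_{F_1}=\varepsilon_{F_2}f$, so $F\mapsto\Delta(F)$ and $F\mapsto\Delta(F)/I^n$ are well-defined functors ${\sf Pres}(A)\to{\sf Mod}(k)$. Moreover $I\subseteq\Delta(F)$, because $\pi(x)=0$ forces $\varepsilon_F(x)=\varepsilon_A(\pi(x))=0$; as $\Delta(F)$ is an ideal this gives $I^n\subseteq\Delta(F)$ for all $n\geq 1$, hence a short exact sequence of representations
\[
0\longrightarrow I^n\longrightarrow \Delta(F)\longrightarrow \Delta(F)/I^n\longrightarrow 0 .
\]

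The heart of the argument is to recognise $\Delta(F)$ as a tensor product of the form covered by Lemma \ref{omega_vanish_hiher_limits}. I would use the isomorphism $A\otimes\bar A\cong\Omega(A)$ recalled above, which is in fact an isomorphism of left $A$-modules with $A$ acting on the first factor (a direct check on $x\otimes y-xy\otimes 1$). Applied to the free algebra $F$ it reads $\Omega(F)\cong F\otimes\bar F$ as left $F$-modules, and since $F=k\oplus\Delta(F)$ we have $\bar F\cong\Delta(F)$. Tensoring on the left with $k$ over $F$, where $k$ is the right $F$-module given by $\varepsilon_F$, then yields
\[
k\otimes_F\Omega(F)\cong k\otimes\bar F\cong\Delta(F).
\]
This tensor product over the left action can be rewritten over the enveloping algebra: setting $\mathcal H:=k\otimes_F F^e$, the $F^e$-representation obtained by inducing the trivial module $k$ along $F\hookrightarrow F^e$ (functorial because the augmentations are compatible), one has $(k\otimes_F F^e)\otimes_{F^e}\Omega(F)\cong k\otimes_F\Omega(F)$, so that $\Delta(F)\cong\mathcal H\otimes_{F^e}\Omega(F)$. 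Lemma \ref{omega_vanish_hiher_limits} now gives $\ilimit^i\,\Delta(F)=0$ for all $i$.

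Finally I would feed this vanishing into the long exact sequence of higher limits attached to the short exact sequence above. Since $\ilimit^{i-1}\Delta(F)=\ilimit^i\Delta(F)=0$, the connecting homomorphism $\ilimit^{i-1}(\Delta(F)/I^n)\to\ilimit^i I^n$ is an isomorphism for every $i$, which is the second displayed identity of the lemma. In the bottom degree the same sequence begins $0\to\ilimit\,I^n\to\ilimit\,\Delta(F)=0$, giving $\ilimit\,I^n=0$ and hence the ``in particular'' claim.

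I expect the only genuine obstacle to be the middle step, namely the functorial identification $\Delta(F)\cong\mathcal H\otimes_{F^e}\Omega(F)$: one must pin down the left $F$-module structure hidden in $A\otimes\bar A\cong\Omega(A)$ and verify that the passage from $\otimes_F$ to $\otimes_{F^e}$ via induction along $F\hookrightarrow F^e$ is natural in the presentation, so that the identity holds as representations of ${\sf Pres}(A)$ and not merely objectwise as vector spaces. Once this is in place, everything else is standard long exact sequence bookkeeping.
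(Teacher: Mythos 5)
Your argument is correct and follows essentially the paper's own route: the paper likewise proves $\ilimit^i\Delta(F)=0$ by combining the identification $\Delta(F)\cong\Omega(F)\otimes_F k$ with the Bergman--Dicks decomposition of $\Omega(F_1*F_2)$ and Proposition \ref{additive_limit_vanish}, and then concludes with the same long-exact-sequence bookkeeping for $I^n\mono\Delta(F)\epi\Delta(F)/I^n$. The only cosmetic difference is that the paper transfers additivity to $\Delta(F)$ itself (viewing it as an additive $\mathcal O$-representation with $\mathcal O(F\epi A)=F$ and writing $\Delta(F)=F\otimes_F\Delta(F)$), whereas you keep $\Omega(F)$ as the additive representation and invoke Lemma \ref{omega_vanish_hiher_limits} with $\mathcal H=k\otimes_F F^e$; both are instantiations of the same vanishing result.
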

\begin{proof}
It is easy to see that $\Delta(F)\cong \Omega(F)\otimes_F k.$ Then
the isomorphism $\Omega(F_1*F_2) \cong
\Omega(F_1)\!\!\uparrow^{(F_1*F_2)^e} \oplus\
\Omega(F_2)\!\!\uparrow^{(F_1*F_2)^e}$ implies
$\Delta(F_1*F_2)\cong \Delta(F_1)\!\!\uparrow^{F_1*F_2}\oplus \
\Delta(F_2)\!\!\uparrow^{F_1*F_2}.$ Hence the representation
$\Delta(F)$ is $F$-additive, and by proposition
\ref{additive_limit_vanish} $\ilimit^i\ \Delta(F)=\ilimit^i\
F\otimes_F\Delta(F)=0.$  This equality and the short exact
sequence  $I^n\mono \Delta(F) \epi \Delta(F)/I^n$  imply that
$\ilimit^i\ I^n = \ilimit^{i-1} \ \Delta(F)/I^n.$
\end{proof}

\begin{Theorem}\label{cychomlim}
Let $A$ be an augmented algebra over a field $k$ of characteristic
$0$. Then there are isomorphisms
\begin{equation}HC_{2n}(A) \cong \ilimit \ F/(I^{n+1}+[F,F]),
\end{equation}
\begin{equation}HC_{2n-1}(A) \cong \ilimit^1 \ F/(I^{n+1}+[F,F]),
\end{equation}
\begin{equation} \overline{HC}_{2n+1}(A)=\ilimit \ I^{n+1}/[I,I^n],
\end{equation}
\begin{equation} \overline{HC}_{2n}(A)=\ilimit^1 \ I^{n+1}/[I,I^n],\end{equation}
\end{Theorem}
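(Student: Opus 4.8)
The plan is to feed the two Quillen $4$-term exact sequences \eqref{Quillen_sequense} and \eqref{Quillen_sequense2}, specialised to the free presentation $E=F$ with kernel $I$, into the higher-limit machinery. In both cases the two outer terms are cyclic homology groups of $A$, which do not depend on the presentation and so are $\ilimit$-acyclic because ${\sf Pres}(A)$ is contractible; the real work is to control the inner representations
$$\mathcal P=F/(I^{n+1}+[F,F])=(F/I^{n+1})_\natural, \qquad \mathcal Q=H_1(F,F/I^n)_\sigma$$
(and the analogous $I^{n+1}/[I,I^n]$ and $H_1(F,I^n)$ in the reduced case). Writing the $4$-term sequence as a splice of two short exact sequences
$$0\to HC_{2n}(A)\to \mathcal P\to \mathcal K\to 0, \qquad 0\to \mathcal K\to \mathcal Q\to HC_{2n-1}(A)\to 0,$$
where $\mathcal K$ is the image of $\mathcal P\to \mathcal Q$, I would run the long exact sequences of higher limits of both. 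Since $HC_{2n}(A)$ and $HC_{2n-1}(A)$ are constant, their positive higher limits vanish and their zeroth limits are themselves, so everything collapses to the single claim that $\ilimit\,\mathcal Q=\ilimit^1\mathcal Q=0$. Granting this, the second sequence gives $\ilimit\,\mathcal K=0$ and $\ilimit^1\mathcal K\cong HC_{2n-1}(A)$, and then the first sequence gives $\ilimit\,\mathcal P\cong HC_{2n}(A)$ together with $\ilimit^1\mathcal P\cong\ilimit^1\mathcal K\cong HC_{2n-1}(A)$, which are the first two formulas; the reduced pair is identical with $\overline{HC}$ in place of $HC$.

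The heart of the matter is therefore the vanishing $\ilimit\,\mathcal Q=\ilimit^1\mathcal Q=0$. Here I would use that $F$ is quasi-free, so that $\Omega(F)\mono F^e\overset{\mu}{\epi} F$ is a length-one bimodule resolution of $F$ and for any $F$-bimodule $M$ Hochschild homology is computed by $M\otimes_{F^e}\Omega(F)\to M$. This yields a short exact sequence of representations
$$0\longrightarrow H_1(F,M)\longrightarrow M\otimes_{F^e}\Omega(F)\longrightarrow [F,M]\longrightarrow 0,$$
with $[F,M]=\ker(M\epi M_\natural)$. By lemma \ref{omega_vanish_hiher_limits} the middle term satisfies $\ilimit^i(M\otimes_{F^e}\Omega(F))=0$ for all $i$, so the long exact sequence degenerates to $\ilimit\,H_1(F,M)=0$ and $\ilimit^1 H_1(F,M)\cong\ilimit\,[F,M]$. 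Finally $\ilimit\,[F,M]\hookrightarrow\ilimit\,M$ by left exactness, and $\ilimit\,M=0$ for $M=I^n$ by lemma \ref{lemma_limits_In}, while for $M=F/I^n$ the preceding lemma gives $\ilimit\,(F/I^n)\cong k$ generated by the unit, whose image in $\ilimit\,(F/I^n)_\natural$ is nonzero (as $I^n+[F,F]\subseteq\Delta(F)=\ker\varepsilon_F$ and $\varepsilon_F(1)=1$), so that $\ilimit\,M\to\ilimit\,M_\natural$ is injective and $\ilimit\,[F,M]=0$. In both cases $\ilimit\,\mathcal Q=\ilimit^1\mathcal Q=0$, as required.

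I expect the main obstacle to be the bookkeeping around the cyclic-coinvariant decoration $\sigma$ on the middle term $H_1(F,F/I^n)_\sigma$: one must check that the above Hochschild sequence survives passage to the relevant $\sigma$-piece so that the $\Omega(F)$-acyclicity of lemma \ref{omega_vanish_hiher_limits} still applies, and that characteristic zero (already used in deriving \eqref{Quillen_sequense}, \eqref{Quillen_sequense2}) is all that is needed beyond it. For the reduced sequence \eqref{Quillen_sequense2} there is no $\sigma$ and the argument is unconditional, since $\ilimit\,[F,I^n]\hookrightarrow\ilimit\,I^n=0$ at once. Alternatively, one could run the whole argument through the spectral sequence of corollary \ref{corollary_spectral} applied to the two-term complex $\mathcal P\to\mathcal Q$, whose cohomologies are the constant functors $HC_{2n}(A)$ and $HC_{2n-1}(A)$; having only two columns it degenerates at $E_2$ and reproduces the same identifications.
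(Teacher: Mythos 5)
Your proposal is essentially the paper's own proof: the same splicing of the Quillen four-term sequences at the image term, the same reduction to $\ilimit^i H_1(F,F/I^n)=0$ and $\ilimit^i H_1(F,I^n)=0$ for $i\in\{0,1\}$ via the sequence $H_1(F,M)\mono \Omega(F)\otimes_{F^e}M \to M\epi M_\natural$ together with lemma \ref{omega_vanish_hiher_limits}, and the same use of the unit/augmentation splitting to see that $\ilimit\, F/I^n\cong k$ injects into $\ilimit\,(F/I^n)_\natural$. The one step you flag but leave open, the $\sigma$-decoration, is closed in the paper by a single observation: since ${\rm char}\,k=0$, averaging over the cyclic action exhibits $H_1(F,F/I^n)_\sigma$ as a functorial direct summand of $H_1(F,F/I^n)$, so its higher limits vanish whenever those of the full $H_1$ do; no compatibility of the Hochschild resolution with the $\sigma$-piece is needed.
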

\begin{Remark}
The first and the third isomorphism was proved by Quillen but we
prove it for completeness.
\end{Remark}
\begin{proof}
First of all, we prove that $\ilimit^i \ H_1(F,F/I^n)=0$ for
$i=0,1.$ Consider the natural projection $p_F:F/I^n\epi
F/(I^n+[F,F])=(F/I^n)_\natural.$ The unit homomorphism
$\eta_F:k\to F$ induces the map $\tilde\eta:k\to F/I^n$, the
augmentation map  $\varepsilon_F : F\to k$ induces the map
$\tilde\varepsilon:(F/I^n)_\natural \epi k$ and the equation
$\tilde \varepsilon \circ p_F\circ \tilde \eta={\sf id}_k$ holds.
Then we obtain the equation $\ilimit(\tilde\varepsilon)\circ
\ilimit(p_F)\circ \ilimit(\tilde \eta)={\sf id}_k,$ and hence
$\ilimit(p_F):\ilimit \ F/I^n\cong k \to \ilimit \
(F/I^n)_\natural$ is a monomorphism. Divide the exact sequence
\begin{equation}0\longrightarrow H_1(F,F/I^n)\longrightarrow \Omega(F)\otimes_{F^e}(F/I^n)\longrightarrow F/I^n \overset{p_F}\longrightarrow (F/I^n)_\natural\longrightarrow 0
\end{equation}
into two short exact sequences $\mathcal S\mono F/I^n \epi
(F/I^n)_\natural$ and $H_1(F,F/I^n)\mono
\Omega(F)\otimes_{F^e}(F/I^n) \epi \mathcal S,$ where $\mathcal S$
is some representation of ${\sf Pres}(A).$ Since $\ilimit \
p_F:\ilimit\ F/I^n \to\ilimit \ (F/I^n)_\natural$ is a
monomorphism, the first short exact sequence show that $\ilimit \
\mathcal S=0.$ By lemma \ref{omega_vanish_hiher_limits} we get
$\ilimit^i\ \Omega(F)\otimes_{F^e} (F/I^n)=0$ for all $i.$ These
equalities and the second short exact sequence imply that
$\ilimit^i\ H_1(F,F/I^n)=0$ for $i=0,1.$

Consider the Quillen's exact sequence \eqref{Quillen_sequense}
\begin{equation}0\longrightarrow HC_{2n}(A)\longrightarrow (F/I^{n+1})_\natural \longrightarrow H_1(F,F/I^n)_\sigma\longrightarrow HC_{2n-1}(A)\longrightarrow 0\end{equation}
and divide it into two short exact sequences $\mathcal T\mono
H_1(F,F/I^n)_\sigma\epi HC_{2n-1}(A)$ and $HC_{2n}(A)\mono
(F/I^{n+1})_\natural\epi \mathcal T.$ Since the field $k$ is of
characteristic $0$, the vector space $H_1(F,F/I^n)_\sigma$ is a
functorial direct summand of   $H_1(F,F/I^n)$. It follows that
$\ilimit^i\ H_1(F,F/I^n)_\sigma=0$ for $i\in \{0,1,\},$ and hence,
$\ilimit\ \mathcal T=0$ and $\ilimit^1\ \mathcal T=HC_{2n-1}(A).$
Using these isomorphisms, the second short exact sequence and the
equality $\ilimit^1\  HC_{2n}(A)=0,$ we obtain $HC_{2n}(A)=\ilimit
\ F/(I^{n+1}+[F,F])$ and $HC_{2n-1}(A)=\ilimit^1 \
F/(I^{n+1}+[F,F])$.

By the similar argument the exact sequence
\begin{equation}0\longrightarrow H_1(F,I^n)\longrightarrow \Omega(F)\otimes_{F^e} I^n \longrightarrow I^n \longrightarrow (I^n)_\natural \longrightarrow 0
\end{equation} and isomorphisms $\ilimit\ I^n=0$ (lemma \ref{lemma_limits_In}) imply $\ilimit^i \ H_1(F,I^n)=0$ for $i\in\{0,1\}.$ Finally,
using the second Quillen's sequence \eqref{Quillen_sequense2}, one can similarly prove the rest two isomorphisms.

\end{proof}

\begin{Corollary}
Let $A$ be a commutative algebra of finite type  over
$\mathbb{C}.$ Then there are isomorphisms
\begin{equation}\label{klq1}
HC_{2n}(A)= \ilimit \ F/(I^{n+1}+[F,F]),
\end{equation}
\begin{equation}
HC_{2n-1}(A)=\ilimit^1 \ F/(I^{n+1}+[F,F]).
\end{equation}
\begin{equation}\label{klq2}
\overline{HC}_{2n+1}(A)=\ilimit\ I^{n+1}/[I,I^n]
\end{equation}
\begin{equation}
\overline{HC}_{2n}(A)=\ilimit^1\ I^{n+1}/[I,I^n].
\end{equation}
\end{Corollary}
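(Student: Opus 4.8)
The plan is to reduce the statement directly to Theorem \ref{cychomlim}, whose only standing hypotheses are that the algebra be \emph{augmented} and defined over a field of characteristic zero. Since $\mathbb{C}$ has characteristic zero, the entire content of the corollary is the existence of an augmentation $\varepsilon_A\colon A\to \mathbb{C}$: once such an $\varepsilon_A$ is fixed, all four isomorphisms follow verbatim by applying Theorem \ref{cychomlim} to the augmented algebra $(A,\varepsilon_A)$.

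To produce the augmentation I would argue as follows. A commutative algebra of finite type over $\mathbb{C}$ is a finitely generated commutative $\mathbb{C}$-algebra, hence a quotient $A\cong \mathbb{C}[x_1,\dots,x_m]/\mathfrak{a}$; the only hypothesis to keep in mind is $A\neq 0$, which guarantees the existence of a maximal ideal $\mathfrak{m}$. By Zariski's lemma, i.e. the algebraic form of Hilbert's Nullstellensatz, the residue field $A/\mathfrak{m}$ is a finite extension of $\mathbb{C}$, and since $\mathbb{C}$ is algebraically closed this forces $A/\mathfrak{m}\cong \mathbb{C}$. Thus the composite
\begin{equation}
\varepsilon_A\colon A \epi A/\mathfrak{m} \overset{\cong}{\longrightarrow} \mathbb{C}
\end{equation}
is a $\mathbb{C}$-algebra homomorphism, i.e. an augmentation. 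Geometrically $\mathfrak{m}$ corresponds to a closed point of the affine variety ${\rm Spec}(A)$ and $\varepsilon_A$ is evaluation at that point, which is precisely the remark made in the introduction that every coordinate algebra of an affine complex variety carries an augmentation.

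Having fixed $\varepsilon_A$, the final step is simply to invoke Theorem \ref{cychomlim}, which then yields the isomorphisms \eqref{klq1}--\eqref{klq2} together with the two intervening ones. I do not expect any genuine obstacle here: the argument is a pure application, and the single piece of nonformal input is the Nullstellensatz over the algebraically closed field $\mathbb{C}$, used only to upgrade an arbitrary maximal quotient to the residue field $\mathbb{C}$ itself. In other words, the algebra being commutative, of finite type, and over $\mathbb{C}$ enters the proof \emph{solely} through the existence of the augmentation; every homological assertion is already contained in Theorem \ref{cychomlim}.
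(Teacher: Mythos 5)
Your proposal is correct and follows essentially the same route as the paper: the paper's proof likewise observes that any maximal ideal $\mathfrak{m}\triangleleft A$ yields an augmentation $A\epi A/\mathfrak{m}\cong\mathbb{C}$ (your invocation of Zariski's lemma just makes this explicit) and then applies Theorem \ref{cychomlim}. The only cosmetic difference is that the paper attributes the two even-degree isomorphisms to Quillen directly, whereas you derive all four from the theorem — which is equally valid, since the theorem establishes all four.
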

\begin{proof}
The isomorphisms (\ref{klq1}) and (\ref{klq2}) are proved by
Quillen. The others follow from the previous theorem and the fact
that any maximal ideal $\mathfrak{m}\triangleleft A$ gives an
augmentation $A\epi A/\mathfrak{m}\cong \mathbb{C}$.
\end{proof}

\section{Derived functors in the sense of Dold-Puppe as higher
limits}\label{section_Derived_functors_in_the_sense_of_Dold-Puppe_as_higher_limits} Let $T$ be an endofunctor on the category of abelian
groups. The family $L_iT(-)$ of derived functors, in the sense of
Dold-Puppe \cite{DP}, of $T$
  are defined by
$$
L_iT(A)=\pi_iTN^{-1}P_\ast,\ i\geq 0,\ A\in {\sf abelian\ groups},
$$
where $P_\ast\to A$ is a projective resolution of $A$, and
 $N^{-1}$ is  the Dold-Kan transform, which is the the inverse of the Moore normalization  functor
$$
N:  \mathcal{S}({\sf abelian\ groups}) \to \mathcal{C}h({\sf
abelian\ groups})
$$
from the category of simplicial abelian groups to the category of
chain complexes (see, for example \cite{W}, Section 8.4).

We will use the standard notation of the tensor, symmetric,
exterior and divided powers:
$$
\otimes^n, S^n,\Lambda^n,\Gamma^n: {\sf abelian\ groups}\to {\sf
abelian\ groups},\ \ n\geq 1
$$

For an abelian group $A$, we consider the category ${\sf Pres}(A)$
of presentations $H\hookrightarrow F\twoheadrightarrow A$ with a
free abelian group $F$.

\begin{Theorem}\label{derivedth}
For $n\geq 1$ and $i\geq 0$, there are natural isomorphisms
\begin{align}
& \ilimit^i\ \Lambda^n(H)=L_{n-i}S^n(A)\label{slim}\\
& \ilimit^i\ \Gamma^n(H)=L_{n-i}\Lambda^n(A)\label{glim}\\
& \ilimit^i\ \otimes^n(H)=L_{n-i}\otimes^n(A)\label{tlim}
\end{align}
where the derived limits are taken over the category ${\sf
Pres}(A)$.
\end{Theorem}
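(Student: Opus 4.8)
The plan is to compute each of the three higher limits from the classical Koszul-complex description of the Dold--Puppe derived functors, read as complexes of representations of ${\sf Pres}(A)$, and then to run the higher-limit spectral sequence of Corollary \ref{corollary_spectral}. Recall that for the two-term free resolution $[H\to F]$ of $A$ one has $L_*\otimes^n(A)=H_*\big([H\to F]^{\otimes n}\big)$ by Eilenberg--Zilber, while $L_*S^n(A)$ is the homology of the Koszul complex of $H\mono F$ with term $\Lambda^jH\otimes S^{n-j}F$ in homological degree $j$, and $L_*\Lambda^n(A)$ is the homology of the Koszul complex with term $\Gamma^jH\otimes \Lambda^{n-j}F$ in degree $j$. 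In each case I would reindex these as a cochain complex $\mathcal F^\bullet$ whose term $\mathcal F^p$ is the summand of homological degree $n-p$; then $\mathcal F^0$ is the pure term $\otimes^n(H)$, $\Lambda^n(H)$, resp. $\Gamma^n(H)$. Since the Dold--Puppe derived functors do not depend on the presentation, the cohomology $H^p(\mathcal F^\bullet)$ is the constant functor $L_{n-p}(\,\cdot\,)(A)$, hence $\ilimit$-acyclic, and Corollary \ref{corollary_spectral} produces a spectral sequence $E_1^{pq}=\ilimit^q\mathcal F^p\Rightarrow L_{n-p-q}(\,\cdot\,)(A)$.

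The whole argument then reduces to showing $E_1^{pq}=0$ for $p\geq 1$, so that the spectral sequence collapses onto the column $p=0$ and yields the edge isomorphism $\ilimit^q\mathcal F^0\cong L_{n-q}(\,\cdot\,)(A)$, which is the theorem (for $q>n$ the abutment vanishes, recovering the stated vanishing). The case of $\otimes^n$ is the cleanest and I would do it first: for $p\geq 1$ the term $\mathcal F^p$ is a sum of tensor products of $n-p$ copies of $H$ with exactly $p\geq 1$ copies of $F$, so it has the form $\mathcal H\otimes F$ for some representation $\mathcal H$. Taking for $\mathcal O$ the constant functor $k$ in Proposition \ref{additive_limit_vanish}, the free-module functor $F$ is an additive $\mathcal O$-representation, since the coproduct in ${\sf Pres}(A)$ is the direct sum of free groups; hence $\ilimit^q(\mathcal H\otimes F)=0$ for all $q$ and all $\mathcal H$. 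Thus $E_1^{pq}=0$ for $p\geq 1$, and \eqref{tlim} follows at once.

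For $S^n$ and $\Lambda^n$ the mixed terms are $\Lambda^{n-p}H\otimes S^pF$ and $\Gamma^{n-p}H\otimes\Lambda^pF$ with $p\geq 1$, which no longer exhibit a free tensor factor, so the key step becomes the vanishing lemma: for every representation $\mathcal H$, every $p\geq 1$ and every $i$,
\[
\ilimit^i\big(\mathcal H\otimes T(F)\big)=0\qquad\text{for } T\in\{\otimes^p,S^p,\Lambda^p,\Gamma^p\}.
\]
The case $T=\otimes^p$ is exactly the fact just used, via $\mathcal H\otimes F^{\otimes p}=(\mathcal H\otimes F^{\otimes p-1})\otimes F$. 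I would obtain the cases $S^p,\Lambda^p,\Gamma^p$ by induction on $p$ (the statement quantified over all $\mathcal H$) from the integral Koszul exact sequences of the identity $F\to F$, namely the acyclic complexes $\Lambda^\bullet F\otimes S^\bullet F$ and $\Gamma^\bullet F\otimes \Lambda^\bullet F$: tensoring with $\mathcal H$ preserves exactness since all groups are free abelian, the interior terms $\Lambda^{p-j}F\otimes S^jF$ (resp. $\Gamma^{p-j}F\otimes\Lambda^jF$) with $0<j<p$ are $\ilimit$-acyclic by the inductive hypothesis applied with coefficient $\mathcal H\otimes\Lambda^{p-j}F$ (resp. $\mathcal H\otimes\Gamma^{p-j}F$), and the long exact sequence of higher limits links the two end terms of degree $p$. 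I expect the genuine difficulty to lie precisely here: one such sequence relates $\ilimit^*$ of the two ends only up to a dimension shift and produces an a priori periodicity, so closing the induction requires combining both Koszul sequences with the left-exactness of $\ilimit$ and the already-established vanishing for $\otimes^p$ in order to anchor the periodicity and force all higher limits to vanish. This interlocking of the symmetric, exterior and divided power functors is exactly why all three appear in the statement. Once the vanishing lemma is available, the spectral sequences for $S^n$ and $\Lambda^n$ collapse onto the columns $\Lambda^n(H)$ and $\Gamma^n(H)$ just as in the $\otimes^n$ case, giving \eqref{slim} and \eqref{glim}.
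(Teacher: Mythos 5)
Your overall architecture coincides with the paper's: the Koszul complexes and Eilenberg--Zilber give complexes of representations of ${\sf Pres}(A)$ whose cohomology is the constant (hence $\ilimit$-acyclic) functor $L_{n-*}T(A)$, and Corollary \ref{corollary_spectral} reduces the theorem to showing that all mixed terms have vanishing higher limits so that the spectral sequence collapses onto the column $\mathcal F^0$. Your treatment of $\otimes^n$ via Proposition \ref{additive_limit_vanish} is fine. The gap is exactly where you suspect it: the proposed induction on $p$ for $\ilimit^i\bigl(\mathcal H\otimes T(F)\bigr)$, $T\in\{S^p,\Lambda^p,\Gamma^p\}$, does not close. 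The two integral Koszul complexes of ${\rm id}_F$ only produce dimension-shifting identifications $\ilimit^{i}(\Lambda^pF\otimes\mathcal H)\cong\ilimit^{i-(p-1)}(S^pF\otimes\mathcal H)$ and $\ilimit^{i}(\Gamma^pF\otimes\mathcal H)\cong\ilimit^{i-(p-1)}(\Lambda^pF\otimes\mathcal H)$; these form a chain $\Gamma^p\to\Lambda^p\to S^p$ rather than a cycle (there is no integral acyclic complex running from $S^pF$ back to $\Gamma^pF$), so the shifts cannot be iterated down into negative degrees, and the higher limits of $S^pF\otimes\mathcal H$ remain completely uncontrolled. Already for $p=2$ the only available sequence is $0\to \Lambda^2F\to F^{\otimes 2}\to S^2F\to 0$, which yields $\ilimit^{i+1}(\Lambda^2F\otimes\mathcal H)\cong\ilimit^{i}(S^2F\otimes\mathcal H)$ and nothing more; the established vanishing for $\otimes^p$ does not rescue this, since $S^pF$, $\Lambda^pF$, $\Gamma^pF$ are not direct summands of $F^{\otimes p}$ integrally.

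The missing ingredient is the paper's Lemma \ref{addlemma} (together with the observation that $\ilimit^i\,\mathcal F(F)=0$ for any polynomial functor $\mathcal F$ with $\mathcal F(0)=0$): one inducts on the cohomological degree $i$, not on the polynomial degree $p$. Every functor $\mathcal F(H)\otimes\mathcal G(F)$ built from tensor products of symmetric, exterior and divided powers is monoadditive, and the cokernel $\Sigma$ of the monoadditivity map $(\mathcal F(H)\otimes\mathcal G(F))^{\oplus 2}\mono \mathcal F(H\oplus F)\otimes\mathcal G(F\oplus F)$ is again a direct sum of functors of the same shape (by the cross-effect decompositions of $S,\Lambda,\Gamma,\otimes$ on direct sums, using $H(c\sqcup c)\cong H\oplus F$). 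Proposition \ref{proposition_cokernel_limits} then gives $\ilimit^i\cong\ilimit^{i-1}\Sigma\cong\cdots\cong\ilimit\,\Sigma^i=0$. With that vanishing lemma in place, the rest of your argument --- collapse of the spectral sequence onto $p=0$ --- is exactly the paper's proof.
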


\begin{Lemma}\label{addlemma}
Let $\mathcal {F, G}$ be functors in the category of abelian
groups, which can be presented as tensor products of certain
symmetric, exterior and divided powers. Then, for any $i\geq 0$,
$$
\ilimit^i\ \left({\mathcal F}(H)\otimes {\mathcal G}(F)\right)=0.
$$
\end{Lemma}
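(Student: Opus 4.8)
The plan is to place $\Phi:=\mathcal F(H)\otimes\mathcal G(F)$ inside a class of representations of ${\sf Pres}(A)$ that is closed under the cokernel functor $\Sigma$ and consists entirely of monoadditive functors; by Corollary \ref{corollary_monoadditive} (applied with $l$ taken arbitrarily large) this forces $\ilimit^i\Phi=0$ for every $i$. Concretely, call a representation \emph{good} if it is a finite direct sum of functors $\mathcal F'(H)\otimes\mathcal G'(F)$ in which $\mathcal F',\mathcal G'$ are tensor products of symmetric, exterior and divided powers and the $F$-factor $\mathcal G'$ has positive degree. Writing $e=\deg\mathcal F$ and $d=\deg\mathcal G\ge 1$ (a nonempty tensor product), $\Phi$ is good, and I will prove (i) every good functor is monoadditive and (ii) $\Sigma$ carries good functors to good functors. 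Because $\Sigma$ commutes with finite direct sums and ${\sf T}$ of a direct sum is the direct sum of the individual maps ${\sf T}$, both statements reduce to a single summand $\mathcal F'(H)\otimes\mathcal G'(F)$.

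The key structural input is a natural description of the relation functor on coproducts. The coproduct in ${\sf Pres}(A)$ is $(\pi_1\colon F_1\epi A)\sqcup(\pi_2\colon F_2\epi A)=(\pi_1+\pi_2\colon F_1\oplus F_2\epi A)$, so $F_{\sf sq}=F\oplus F$ and $H_{\sf sq}=\ker(\pi+\pi\colon F\oplus F\to A)$. I claim there is a natural isomorphism $H_{\sf sq}\cong H\oplus F$: the formula $s(y)=(-y,y)$ defines a natural section of the surjection $p_2\colon H_{\sf sq}\to F$, $(x,y)\mapsto y$, whose kernel is $i_1(H)$, whence $H_{\sf sq}=i_1(H)\oplus s(F)$ naturally in the presentation. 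Under this splitting $i_1$ becomes the inclusion $H\hookrightarrow H\oplus F$ of the first summand, while $i_2$ becomes the twisted diagonal $h\mapsto(h,h)$ (using $H\subseteq F$). Combining this with the Cauchy/exponential decompositions $\mathcal F(H\oplus F)=\bigoplus_{a+b=e}\mathcal F_{a,b}(H,F)$ and $\mathcal G(F\oplus F)=\bigoplus_{p+q=d}\mathcal G_{p,q}(F,F)$ into bidegree parts (valid for $S,\Lambda,\Gamma,\otimes$ and their tensor products, and natural in each variable), I obtain a natural splitting of $\Phi(c\sqcup c)=\mathcal F(H_{\sf sq})\otimes\mathcal G(F_{\sf sq})$ into blocks indexed by $(a,b,p,q)$, each a single good summand of $F$-degree $b+d\ge 1$.

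Monoadditivity is then immediate: $\Phi(i_1)$ is an isomorphism onto the block $(e,0,d,0)$, whose $F$-part sits in $q=0$, while $\Phi(i_2)$ is a split monomorphism landing in the blocks with $q=d$ (its $F$-part is the inclusion of $\mathcal G_{0,d}$, and its $H$-part $\mathcal F(H(i_2))$ is split because $i_2\colon H\to H_{\sf sq}$ is a split injection). Since $d\ge 1$, the index sets $q=0$ and $q=d$ are disjoint, so ${\sf T}_\Phi=(\Phi(i_1),\Phi(i_2))$ is a split monomorphism. Passing to cokernels, $\Sigma\Phi$ is the direct sum of the blocks with $0<q<d$, which survive untouched, together with $\operatorname{coker}\Phi(i_1)$ inside the $q=0$ block, namely $\big(\mathcal F(H\oplus F)/\mathcal F(H)\big)\otimes\mathcal G(F)$, and $\operatorname{coker}\Phi(i_2)$ inside the $q=d$ block. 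Each of these is again a finite direct sum of terms $\mathcal F'(H)\otimes(\text{positive-degree power of }F)$, so $\Sigma\Phi$ is good, establishing (ii).

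The point requiring real care, and the main obstacle, is the natural splitting $H_{\sf sq}\cong H\oplus F$: it is exactly what decouples the relation variable $H$ from the free variable $F$ and lets the additivity of $F$ drive the vanishing. A secondary subtlety is that $i_2$ induces the twisted diagonal $h\mapsto(h,h)$ rather than a coordinate inclusion; to see that $\operatorname{coker}\mathcal F(H(i_2))$ is still good, I use the natural automorphism $(u,w)\mapsto(u,w-u)$ of $H\oplus F$ (well defined via $H\subseteq F$), which carries the twisted diagonal to the standard inclusion and hence identifies the cokernel with $\bigoplus_{b\ge 1}\mathcal F_{a,b}(H,F)$. Granting these two naturality facts, the induction closes and Corollary \ref{corollary_monoadditive} gives $\ilimit^i\big(\mathcal F(H)\otimes\mathcal G(F)\big)=0$ for all $i\ge 0$.
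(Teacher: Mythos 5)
Your proof is correct and follows essentially the same route as the paper's: show the functor is monoadditive, show that $\Sigma$ of such a functor is again a direct sum of functors of the same type (with the $F$-factor of positive degree), and conclude by induction via Proposition \ref{proposition_cokernel_limits} / Corollary \ref{corollary_monoadditive}. The paper states this in three sentences; you supply exactly the details it suppresses — the natural splitting $H_{\sf sq}\cong H\oplus F$ via the section $y\mapsto(-y,y)$, the fact that $i_2$ becomes the twisted diagonal, and the bidegree bookkeeping showing the cokernel stays in the class — all of which check out.
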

\begin{proof}
The functors ${\mathcal F}(H)\otimes {\mathcal G}(F)$ are
monoadditive, therefore, the statement follows for $i=0$.  Observe
that the cokernel of the natural map
$$
(\mathcal F(H)\otimes {\mathcal G}(F))^2\hookrightarrow {\mathcal
F}(H\oplus F)\otimes {\mathcal G}(F\oplus F)
$$
also can be presented as a directs sum of functors of the type
considered in lemma. The statement now follows by induction on
$i$, using proposition \ref{proposition_cokernel_limits}.
\end{proof}
\noindent{\it Proof of theorem \ref{derivedth}.} Given an element
 $$ 0\to H\to F\to A\to 0$$
 of ${\mathcal Ext}(A)$,
the Koszul complexes
\begin{equation}\label{kos1}
Kos(H\to F):\ \ \  0\to \Lambda^n(H)\to \Lambda^{n-1}(H)\otimes
F\to \dots \to H\otimes S^{n-1}(F)\to S^n(F)
\end{equation}
and
\begin{equation}
Kos'(H\to F):\ \ \  0\to \Gamma^n(H)\to \Gamma^{n-1}(H)\otimes
F\to \dots \to H\otimes \Lambda^{n-1}(F)\to \Lambda^n(F)
\end{equation}
represent models of the objects $LS^n(A)$ and $L\Lambda^n(A)$ in
the derived category (see \cite{Kock}, Proposition 2.4 and Remark
2.7). Here the maps between the terms of Koszul complexes are
standard. In particular, there are natural isomorphism
$$
L_iSP^n(A)=H_i(Kos(H\to F)),\ \ \ L_i\Lambda^n(A)=H_i(Kos'(H\to
F)).
$$
Observe that, for any polynomial functor $\mathcal F$, such that
$\mathcal F(0)=0$, $\ilimit^i\ \mathcal F(F)=0$. This follows by
induction on $i$, using proposition
\ref{proposition_cokernel_limits}. The isomorphsims (\ref{slim})
and (\ref{glim}) follow from lemma \ref{addlemma} and corollary
\ref{corollary_spectral}.

It turns out from the Eilenberg-Zilber theorem that the derived
functors of the $n$-th tensor power can be described as
$$
L_i\otimes^n(A)=H_i(\underbrace{A\buildrel{L}\over\otimes \dots
\buildrel{L}\over\otimes A}_{n\ \text{terms}})=H_i((H\to
F)^{\otimes n}),\ 0\leq i\leq n-1.
$$
Now observe that, all terms of the complex $(H\to F)^{\otimes n}$
in degrees $0<i<n$ are direct sums of the functors from lemma
\ref{addlemma} and hence
$$
\ilimit^j\ ((H\to F)^{\otimes n})_i=0,\ j\geq 0,\ 0<i<n.
$$
Applying proposition \ref{proposition_cokernel_limits}, we also
get
$$
\ilimit^j\ ((H\to F)^{\otimes n})_0=\ilimit^j\ \otimes^n(F)=0,\
j\geq 0.
$$
The isomorphism (\ref{tlim}) follows from corollary
\ref{corollary_spectral} applied to the complex $(H\to F)^{\otimes
n}$. $\Box$

\vspace{.3cm} Recall that, for an abelian group $H$, there are
natural isomorphisms
$$
L_nS^n(H,1)=\Lambda^n(H),\ L_n\Lambda^n(H,1)=\Gamma^n(H),
L_n\otimes^n(H,1)=\otimes^n(H).
$$
Therefore, the formulas (\ref{slim})-(\ref{tlim}) can be written
as
\begin{equation}\label{lastfm}
\ilimit^i L_nF(H,1)=L_{n-i}F(A)
\end{equation}
where $F=S^n,\Lambda^n,\otimes^n$. It is natural to conjecture
that the formulas like (\ref{lastfm}) take place for more general
class of polynomial functors of degree $n$, for example, for
strictly $n$-polynomial functors. It is easy to show using the
above methods that (\ref{lastfm}) is true for iterated tensor
products of symmetric and exterior powers.

\end{document}